\theoremstyle{plain} 
\newtheorem{theorem}{Theorem}[section]
\newtheorem{lemma}[theorem]{Lemma}
\newtheorem{proposition}[theorem]{Proposition}
\newtheorem{corollary}[theorem]{Corollary}
\theoremstyle{remark}
\newtheorem{remark}[theorem]{Remark}
\newtheorem{example}[theorem]{Example}
\newtheorem*{lem*}{Lemma}
\newtheorem*{sublem*}{Sublemma}
\newtheorem*{remark*}{Remark}
\newtheorem*{NB*}{NB}
\newcommand{\Lbb}{  \mathbb{L}   }
\newcommand{\Mbb}{  \mathbb{M}   }
\newcommand{\R}{  \mathbb{R}   }
\newcommand{\C}{  \mathbb{C}   }
\newcommand{\Z}{  \mathbb{Z}   }
\newcommand{\N}{  \mathbb{N}   }
\newcommand{\T}{  \mathbb{T}   }
\def\ir{{\rm i}}
\newcommand{\Cc}{  \mathcal{C}   }
\newcommand{\cC}{  \mathcal{C}   }
\newcommand{\D}{  \mathcal{D}   }
\newcommand{\F}{  \mathcal{F}   }
\newcommand{\J}{  \mathcal{J}   }
\newcommand{\U}{  \mathcal{U}   }
\renewcommand{\L}{  \mathcal{L}   }
\newcommand{\M}{  \mathcal{M}   }
\renewcommand{\O}{  \mathcal{O}   }
\renewcommand{\P}{  \mathcal{P}   }
\newcommand{\Tc}{  \mathcal{T}   }
\newcommand{\Zc}{  \mathcal{Z}   }
\newcommand{\Sc}{  \mathcal{S}   }
\newcommand{\om}{  \omega   }
\newcommand{\Om}{  \Omega   }
\newcommand{\ga}{\gamma   }
\newcommand{\s}{  \sigma   }
\newcommand{\ka}{  \kappa   }
\renewcommand{\r}{  \rho   }
\renewcommand{\phi}{  \varphi  }
\newcommand{\eps}{\varepsilon}
\newcommand{\de}{  \delta   }
\renewcommand{\b}{  \beta   }
\renewcommand{\a}{  \alpha   }
\newcommand{\la}{  \lambda_a   }
\newcommand{\La}{  \Lambda  }
\newcommand{\diag}{\operatorname{diag}}
\newcommand{\meas}{\operatorname{meas}}
\newcommand{\be}{\begin{equation}}
\newcommand{\ee}{\end{equation}}
\newcommand{\ben}{\begin{equation*}}
\newcommand{\een}{\end{equation*}}
\newcommand{\ban}{\begin{align*}}
\newcommand{\ean}{\end{align*}}
\numberwithin{equation}{section}
\newcommand{\dd}{  \text{d}   }
\newcommand{\nazz}{ (\nabla_\rho\cdot {\mathfrak z} ) }
\newcommand{\subsubsubsection}[1]{\paragraph{\it{#1}}}
 \author{ Beno\^it Gr\'ebert}
\address{Laboratoire de Math\'ematiques Jean Leray, Universit\'e de Nantes, UMR CNRS 6629\\
2, rue de la Houssini\`ere \\
44322 Nantes Cedex 03, France}
\email{benoit.grebert@univ-nantes.fr}
\author{ Victor Vila\c{c}a Da Rocha }
\address{BCAM - Basque Center of Applied Mathematics\\
Alameda de Mazarredo 14 \\
48009 Bilbao, Bizkaia (Basque Country), Spain}
\email{vvilaca@bcamath.org}
\title[Small amplitude  solutions for coupled NLS systems]
{Stable and unstable  time quasi periodic solutions for a system of coupled NLS equations}
\begin{document}

\begin{abstract}
We prove that  a system of coupled nonlinear Schr\"odinger  equations on the torus exhibits both stable and unstable small KAM tori.
In particular the unstable tori are related to a beating phenomena which has been proved recently in \cite{GPT}. This is the first example of unstable tori for a 1d PDE.

\medskip

 
 \end{abstract}

\keywords{ Small amplitude solutions, modulational instabilities, Hamiltonian systems, nonlinear PDE, KAM theory.}
\thanks{The authors are partially supported   by the  grant  BeKAM ANR -15-CE40-0001-02 and by the Centre Henri Lebesgue ANR-11-LABX-0020-01. 
The second author is also supported by the grant ERCEA Advanced Grant 2014 669689 - HADE
}

\maketitle

\tableofcontents

\section{Introduction}

We consider the system of coupled nonlinear Schr\"odinger  equations  on the torus
\begin{equation}\begin{dcases}
i\partial_tu+\partial_{xx}u=\left|v\right|^2u,& \quad (t,x)\in \mathbb{R}\times\mathbb{T}, \\ 
i\partial_tv+\partial_{xx}v=\left|u\right|^2v.
\end{dcases}\label{NLSC}\end{equation}
This system is Hamiltonian when considered on the phase space $(u,\bar u,v,\bar v)\in (L^2(\T))^4$ endowed with the symplectic form $-idu\wedge d\bar u-idv\wedge d\bar v$. The Hamiltonian of the system is given by 
\begin{equation*}
 H:=\displaystyle\int_{\mathbb{T}}\left(|u_x|^2+|v_x|^2\right)dx + \displaystyle\int_{\mathbb{T}}|u|^2|v|^2dx.
\end{equation*}
In section \ref{secappli} we will consider a slightly more general case where
we add a higher order perturbation $R_5$ (see \eqref{sysgen}). In the introduction we prefer to focus on the simplest case. We also remark that all our results concern small amplitude solutions and thus the sign in front of the linearity doesn't affect our results (but we need the same sign in both line of \eqref{NLSC} to conserve the Hamiltonian structure).


In order to take profit of the geometry of the torus, we write the Fourier series expansion of $u$, $\overline{u}$, $v$ and $\overline{v}$:
\begin{align*}
u(t,x)&=\sum\limits_{j\in\mathbb{Z}}a_j(t)e^{ijx}, \quad \quad \quad \overline{u}(t,x)=\sum\limits_{j\in\mathbb{Z}}\bar a_j(t)e^{-ijx},\\
v(t,x)&=\sum\limits_{j\in\mathbb{Z}}b_j(t)e^{ijx}, \quad \quad \quad \overline{v}(t,x)=\sum\limits_{j\in\mathbb{Z}}\bar b_j(t)e^{-ijx}.\\
\end{align*}
In this variables, the symplectic structure becomes 
\begin{equation*}
 -i\sum_j da_j\wedge d\bar a_j - i\sum_j db_j\wedge d\bar b_j,
\end{equation*}
and the Hamiltonian $H$ of the system reads
\begin{equation}\label{defNP}
 H(a,\bar a,b,\bar b)=\sum\limits_{j\in\mathbb{Z}}j^2(a_j\bar a_j+b_j\bar b_j)+\sum\limits_{\substack{i,j,k,l\in\mathbb{Z} \\ i+j=k+l}}a_kb_l\overline{a}_i\overline{b}_j=P_2+P_4.
\end{equation}
In this article we are interested in the persistence of two dimensional linear invariant tori: given $p, q\in\Z$ and $a_p,b_q\in\C$, 
\begin{equation}\label{sol}\begin{dcases}
 u(x,t)&=a_{p}e^{ipx}e^{-ip^2t},\\ v(x,t)&=b_{q}e^{iqx}e^{-iq^2t}, 
 \end{dcases}\end{equation}
is a  solution to the linear system associated to the quadratic Hamiltonian $P_2$. 
Equivalently we can say that for any $c\in\R^2$ and any $p,q\in\Z$, the torus $\Tc_{c}(p,q)=\{|a_p|^2=c_1,\ |b_q|^2=c_2  \}$ is  invariant under the flow of $P_2$. 
We prove (see Theorem \ref{KAMunstable}) that for $p\neq q$, for $\rho$ in a Cantor set of full measure in $[1,2]^2$ and for $\nu$ small enough, 
the non linear Hamiltonian $P_2+P_4$ admits an invariant torus close to $\Tc_{\nu\rho}(p,q)$. 
Furthermore we prove that these tori are {\it linearly unstable}: the system \eqref{NLSC} linearized around $\Tc_{\nu\rho}(p,q)$ admits one hyperbolic direction. 
In other words, for $|a_p|^2=\nu\rho_1,\ \ |b_q|^2=\nu\rho_2 $ with $\rho$ in a Cantor set  and $\nu$ small enough, \eqref{NLSC} admits an unstable small amplitude quasi periodic solution close to \eqref{sol}. Precisely we prove in section \ref{subsecunstable}:
 \begin{theorem}\label{coro1}Fix $p\neq q$ and $s>1/2$. There exists $\nu_0>0$ and for $0<\nu<\nu_0$ there exists $\Cc_\nu\subset [1,2]^2$ asymptotically of full measure (i.e. $\lim_{\nu\to 0}\meas([1,2]^2\setminus \Cc_\nu )=0$) such that for $\r\in\Cc_\nu$ there exists a quasi periodic solution $(u,v)$ of \eqref{sysgen} of the form
\begin{equation*}\begin{dcases}
 u(x,t)&=\sum_{j\in\mathbb{Z}}u_j(t\om)e^{ijx},\\
 v(x,t)&=\sum_{j\in\mathbb{Z}}v_j(t\om)e^{ijx},
 \end{dcases}\end{equation*}
where $U(\cdot)=(u_j(\cdot))_{j\in\Z}$ and $V(\cdot)=(v_j(\cdot))_{j\in\Z}$ are analytic  functions from $\T^2$ into $\ell^2_s$ satisfying uniformly in $\theta\in\T^2$
\begin{equation*}
\begin{dcases}
\big||u_p(\theta)|-\sqrt{\nu\rho_1}\big|^2+\sum_{j\neq p}(1+j^2)^s|u_j(\theta)|^2=\mathcal{O}(\nu^3),\\
\big||v_q(\theta)|-\sqrt{\nu\rho_2}\big|^2+\sum_{j\neq q}(1+j^2)^s|v_j(\theta)|^2=\mathcal{O}(\nu^3) 
\end{dcases}
\end{equation*}
and where $\om\equiv\om(\r)\in\R^2$ is a nonresonant frequency vector that satisfies
\begin{equation*}
 \om=(p^2,q^2)+\mathcal{O}(\nu^\frac32).
\end{equation*}
Furthermore this solution is linearly unstable.
\end{theorem}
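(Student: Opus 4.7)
The plan is to derive Theorem~\ref{coro1} from the main KAM result \ref{KAMunstable}, which produces invariant tori with a partially hyperbolic normal frame once the Hamiltonian has been put into the appropriate resonant normal form. I would begin with action-angle coordinates on the tangential pair, $a_p=\sqrt{I_1+y_1}\,e^{-i\theta_1}$, $b_q=\sqrt{I_2+y_2}\,e^{-i\theta_2}$, centred at $I=\nu\rho$, keep $z=((a_j)_{j\neq p},(b_k)_{k\neq q})$ as complex normal coordinates, and rescale $y\mapsto\nu y$, $z\mapsto\nu^{1/2}z$ so that the quartic part becomes formally of size $\nu$. A resonant Birkhoff normal form of order~$4$ eliminates every monomial $a_kb_l\bar a_i\bar b_j$ of $P_4$ which is non-resonant under $P_2$; the joint constraints $i+j=k+l$ and $i^2+j^2=k^2+l^2$ force $\{i,j\}=\{k,l\}$, so only the integrable twist terms $|a_k|^2|b_l|^2$ and, because $p\neq q$, the single exchange monomial $a_pb_q\bar a_q\bar b_p+\mathrm{c.c.}$ survive in the normal form.

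After substituting the action-angle variables and restricting to the torus, the integrable part yields a non-degenerate tangential frequency map $\omega(\rho)=(p^2,q^2)+\nu(\rho_2,\rho_1)+\O(\nu^2)$ (twist determinant $-\nu^2\neq 0$) together with integrable normal frequencies $\Omega^a_j=j^2+\nu\rho_2$ and $\Omega^b_k=k^2+\nu\rho_1$. The exchange monomial becomes the $\theta$-dependent coupling $\nu\sqrt{\rho_1\rho_2}\bigl(e^{-i(\theta_1+\theta_2)}\bar a_q\bar b_p+\mathrm{c.c.}\bigr)$ involving only the pair $(a_q,b_p)$. The key identity $\Omega^a_q+\Omega^b_p=\omega_1+\omega_2$, forced by the $a\leftrightarrow b$ symmetry of $P_4$, makes this coupling exactly resonant, and the rotation $a_q=e^{-i\Omega^a_qt}A$, $b_p=e^{-i\Omega^b_pt}B$ reduces the $2\times 2$ block governing $(A,B)$ to the autonomous system $\dot A=-i\lambda\bar B$, $\dot B=-i\lambda\bar A$ with $\lambda=\nu\sqrt{\rho_1\rho_2}$. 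Because the coupling is of $\bar a_q\bar b_p$ type rather than $a_q\bar b_p$ type, the resulting Hamiltonian matrix is non-normal and its Floquet spectrum is $\{\pm\lambda\}$: exactly one real hyperbolic pair. All other normal modes remain elliptic with Floquet exponents $\pm i\Omega^a_j, \pm i\Omega^b_k$.

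With this normal form in hand, the hypotheses of Theorem~\ref{KAMunstable} reduce to: the non-degeneracy of $\rho\mapsto\omega(\rho)$ just checked; the first and second Melnikov non-resonance conditions on $\omega,\Omega^{a/b}$, which hold outside a Cantor set $\Cc_\nu$ of full measure in the $\nu\to 0$ limit once one excludes the single resonance $(\ell,j,k)=((1,1),q,p)$ that has already been absorbed into the hyperbolic block; and the analyticity and momentum-conservation of $P_4$, both automatic. Applying the theorem produces the analytic torus embedding $\theta\mapsto(U(\theta),V(\theta))$ with the stated asymptotics, and since the hyperbolic $2\times 2$ block is preserved by the KAM iteration the linearised flow around the resulting torus inherits exactly that hyperbolic direction. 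The main technical obstacle is Theorem~\ref{KAMunstable} itself: it requires a KAM scheme whose reducibility step tolerates a non-normal $2\times 2$ block with Floquet exponents of size comparable to the twist, together with measure estimates that remain uniform in the presence of such a block. Once that theorem is available, the deduction of Theorem~\ref{coro1} is the computational verification outlined above.
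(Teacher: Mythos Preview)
Your outline is correct and follows the paper's route almost step for step: Birkhoff normal form to isolate $Z_4$, action--angle variables on $(a_p,b_q)$, identification of the single resonant exchange monomial $a_pb_q\bar a_q\bar b_p+\mathrm{c.c.}$ producing the hyperbolic $(a_q,b_p)$ block, rescaling by $\nu$, verification of the Melnikov/transversality conditions, and application of the abstract KAM theorem with a finite hyperbolic part. The only point worth flagging is that the paper removes the $\theta$-dependence of the exchange term by the \emph{symplectic} change of variables $c_q=a_qe^{-i\theta_2}$, $d_p=b_pe^{-i\theta_1}$ together with the compensating action shifts $y_1=x_1+|b_p|^2$, $y_2=x_2+|a_q|^2$, rather than by your time-rotation $a_q=e^{-i\Omega^a_qt}A$; this is what makes the hyperbolic block part of an autonomous Hamiltonian normal form to which the KAM machinery applies, and it adds the imaginary shift $\pm i\nu(\rho_1-\rho_2)$ to your exponents $\pm\nu\sqrt{\rho_1\rho_2}$ without changing their real parts or the instability conclusion.
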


It is not the first time that one exhibits unstable KAM tori (see for instance \cite{EGK,PP}) but it turns out that it is the first example in a one dimensional context. 
This unstable behavior is to be compared with to the {\it modulational instabilities} extensively studied by physicists since fifty years  (see \cite{BF, ZO} 
and \cite{Hung17} for a coupled case different from ours). \\
We also prove (see Theorem \ref{KAMstable} and Corollary \ref{coro2}) the persistence of the invariant torus $\Tc_{\rho}(p,p)$ but, in this case, the torus in linearly stable. \\
We stress out that, although the existence of invariant tori requires a lot of assumptions (and in particular we have to assume that $\rho$ is in a Cantor set), 
when an invariant torus exists, its stability or instability  is only related to the choice of the modes.

\medskip

The result is obtained by putting $H$ in a normal form $h+f$ suitable to  apply a singular KAM theorem (see section \ref{KAM}) which is essentially contains in \cite{EGK}. 
Notice that $P_2$ is totally resonant and thus is not adapted to a KAM procedure. The general idea, coming from \cite{KP}, consists in using $P_4$ to break the resonances. 
First we apply a Birkhoff procedure (see Section \ref{birk}) to eliminate the non resonant part of $P_4$
$$(P_2+P_4)\circ\tau= P_2+Z_4+\text{higher order term}.$$
Then in sections \ref{subsecunstable} and  \ref{subsecstable} we calculate the effective part of $Z_4$ in two different cases. This step is highly related to the choice of the torus that we want to perturb. \\ 
We notice that we could consider more general tori of any finite dimension (i.e. quasi periodic solutions constructed on finitely many linear modes).
The instability of the corresponding torus will appear when one excites initially two modes $a_p$ and $b_q$ with $p\neq q$. 
To simplify the presentation we prefer to focus on two dimensional tori.\\
The strategy and the proofs are inspired by \cite{EGK}. The aim of this paper is to present these recent technics in a simpler case leading to a surprising result: instability seems typical even in 1d context.

\medskip

We end this introduction with a remark linking instability of KAM tori and existence of a beating effect.
Taking advantage of the resonances between the linear frequencies and of the coupling by the quartic term $P_4$,  Gr\'ebert-Paturel-Thomann proved in \cite{GPT} (see also \cite{V}) that \eqref{NLSC} exhibits a beating phenomena: roughly speaking when you consider initial data of the form
\begin{equation}\label{descr} 
 \begin{dcases}
 u(0,x)&=a_{p}(0)e^{ipx}+a_{q}(0)e^{iqx},\\
  v(0,x)&=b_{p}(0)e^{ipx}+b_{q}(0)e^{iqx}\,,
  \end{dcases}
  \end{equation}
  with $p\neq q$ and $|a_{p}(0)|=|b_{q}(0)|=\ga \eps$, $|a_{q}(0)|=|b_{p}(0)|=(1-\ga)\eps$ for $0<\ga<1/2$ and $\eps$ small enough, then the four modes exchange energy periodically, i.e. they are close to
 \begin{equation*} 
\begin{dcases}
 |a_{q}(t)|^{2}&= |b_{p}(t)|^{2}=K_{\gamma}(\eps^{2} t),\\
 |a_{p}(t)|^{2}&= |b_{q}(t)|^{2}=1-K_{\gamma}(\eps^{2} t),
 \end{dcases}
 \end{equation*} 
 where $K_\ga$ a $2T-$periodic function ($T\sim|\ln \ga|$)  which satisfies $K_\ga(0)=\ga$ and $K_\ga(T)=1-\ga$. 
In \cite{GPT} the result is proved only  for a finite but very long time but in view of \cite{HP}, we can expect that such beating solution exists for all time. \\
In this work we consider the case $\ga=0$ which corresponds to a two dimensional invariant torus, $\Tc_{(\sqrt{|a_{p}(0)|},\sqrt{|b_{q}(0)|})}$ for the linear system and 
we prove that the KAM theory applies, i.e. that the non linear Hamiltonian $P_2+P_4$ admits invariant tori close to $\Tc_{(\sqrt{|a_{p}(0)|},\sqrt{|b_{q}(0)|})}$. 
Nevertheless, as we have seen, the tori are linearly unstable: when  linearized  around the torus, the system presents two hyperbolic directions which corresponds to the two other modes of the beating picture above.  This means that the beating effect is related to the instability of the two tori: the one construct on the modes $a_p,b_q$ and the one constructed on the modes $a_q,b_p$. Actually the monomial in $P_4$ which makes possible the beating effect, namely $a_p\bar b_p \bar a_q b_q$, is also responsible for the instability of the tori.\\
The beating phenomena has also be exhibited  for the quintic NLS (see \cite{GT}) and for cubic NLS with some special nonlinearities (see \cite{GV}). 
It turns out that following the same line we could prove the existence of unstable KAM Tori in these two other cases.
The main problem in both cases will be to verify that the hypotheses of the KAM theorem are satisfied, which will lead to computations similar but different from those of Appendix 
\ref{AppA}.

\section{An abstract KAM theorem}
In this section we state a KAM theorem adapted to our problem. 
We consider a Hamiltonian $H=h_0+ f$, where  $h_0$ is a quadratic Hamiltonian in normal form
\begin{equation}\label{NN}
h_0 =\Om(\r)\cdot r+\sum_{\a\in\Zc}\Lambda_\a(\r)|\zeta_\a|^2 .\end{equation}
Here 
\begin{itemize}
\item $\rho$ is a parameter in $\D$, which is a compact in the space $\R^{n}$;
\item $r\in\R^n$ are the actions corresponding to the internal modes
 $(r,\theta)\in (\R^n\times\T^n,dr\wedge d\theta)$;
\item $\L$ and $\F$ are respectively infinite and finite sets, 
 $\Zc$ is the disjoint union $\L\cup\F$;
\item  $\zeta=(\zeta_s)_{s\in\Zc}\in \C^\Zc$ are the external modes endowed with the standard complex symplectic structure $-\ir \mathrm d\zeta\wedge \mathrm d\bar\zeta$ .  
The external modes decomposes in an infinite part $\zeta_\L=
   (\zeta_s)_{s\in\L}$, corresponding  to elliptic directions, which means that $\Lambda_\a\in\R$ for $\a\in\L$, and  a finite part  $\zeta_\F=(\zeta_s)_{s\in\F}$ corresponding to hyperbolic directions, which means that $\Im\Lambda_s\neq 0$ for $s\in\F$;
\item the mappings 
 \be\label{properties}\left\{\begin{array}{ll}
\Om:\D\to\R^{n},&\\
\La_\a:\D\to \C\,,&\quad  \a\in \Zc,
\end{array}\right.\ee
are smooth. 
\item   $f=f(r,\theta, \zeta;\rho)$ is the perturbation,
 small compare to the integrable part $h_0$. 
\end{itemize}

\subsection{Setting}
We define precisely the spaces and norms:

 \smallskip

\noindent {\bf Clustering structure on $\L$.}
We assume that $\L$ has a clustering structure: 
$$\L=\cup_{j\in \N}\L_j$$
where $\L_j$, $j\in\N$, are finite sets of cardinality $d_j\leq d<+\infty$. If $\a\in\L_j$ we denote $[\a]=\L_j$ and $w_\a=j$.
We consider $\F$ as an extra cluster of $\Zc=\L\cup\F$ and for $\a\in\F$ we set $w_\a=1$.

\begin{example}\label{ex21} In the second case of NLS systems (see Subsection \ref{subsecstable}),  we will set $\L=\Zc\subset\Z\times\{\pm\}$, $\F=\emptyset$ and 
$\zeta_{\a},\ \a\in\Zc$ will denote all the external modes: $\zeta_{j_+}=a_j, \ \zeta_{j_-}=b_j$ for $j\in\Z\setminus\{p\}$ and $a_p$, $b_p$ are the internal modes. 
The clusters are given by $[j\pm]=\{j_+,j_-,-j_+,-j_-\}$ and $d_j=4$ for $j\neq |p|$, and an extra cluster is given by $[-p]=\{-p_+,-p_-\}$ and $d_{-p}=2$. 
Then we set $w_{j\pm}=|j|$.
\end{example}

\begin{example}\label{ex22} In the first case of NLS systems (see Subsection \ref{subsecunstable}),  we will set $\Zc\subset\Z\times\{\pm\}$. 
If $a_p$, $b_q$ ($p\neq q$) are the two internal modes then $\L=\Z\setminus\{p,q\}\times\{\pm\}$ and  $\zeta_{\a},\ \a\in\L$ will denote all the elliptic external modes:  
$\zeta_{j_+}=a_j, \ \zeta_{j_-}=b_j$ for $j\in\Z\setminus\{p,q\}$. 
As we will see we have two hyperbolic external modes, $b_p$ and $a_q$ then $\F=\{p_-,q_+  \}$. 
The clusters of $\L$ are given by $[j\pm]=\{j_+,j_-,-j_+,-j_-\}$ with $d_j=4$ for $j\neq |p|,|q|$.
If $p\neq-q$, we have to add  two extra clusters given by $[-p]=\{-p_+,-p_-\}$ and  $[-q]=\{-q_+,-q_-\}$ with $d_{-p}=d_{-q}=2$. 
Then we set $w_{j\pm}=|j|$.
\end{example}

 \smallskip

\noindent {\bf Linear space.}
Let  $s\geq 0$, we consider the complex  weighted $\ell_2$-space
$$
Z_s=\{\zeta=(\zeta_\a\in\C,\ \a\in \Zc)\mid \|\zeta\|_s<\infty\},
$$
where
$$
\|\zeta\|_s^2=\sum_{\a\in\Zc}|\zeta_\a|^2 w_\a^{2s}.$$
We provide the spaces $Z_s\times Z_s$, $s\geq 0$, with the symplectic structure $-\ir \dd\zeta\wedge\dd \bar\zeta$. \\
Similarly we define 
 $$
Y_s=\{\zeta_\L=(\zeta_\a\in\C,\ \a\in \L)\mid \|\zeta\|_s<\infty\},
$$
endowed with the same norm and symplectic structure restricted to indexes in $\L$.

 \smallskip
 
  \noindent {\bf Infinite matrices.}
 For the elliptic variables, we denote by $\M_s$ the set of infinite matrices $A:\L\times \L\to \C$ such that 
 $A$ maps linearly $Y_s$ into $Y_s$. We provide  $\M_s$ with the operator norm
$$|A|_s=\|A\|_{\L(Y_s,Y_s)}.$$

 
   We say that a matrix $A\in\M_s$ is in normal form if it is block diagonal and Hermitian, i.e.
  \be\label{ANF}A_{\b}^\a=0 \quad \text{for}\quad [\a]\neq[\b] \quad \text{and}\quad A_{\b}^\a=\overline{A_{\a}^\b}\text{ for }\a,\b\in\L.\ee
In particular, we use that if $A\in\M_s$ is in normal form, its eigenvalues are real.

 \smallskip

\noindent {\bf A class of  Hamiltonian functions.}
Let us fix any $n\in\N$. 
On the space
$$
\C^n\times \C^n \times (Z_s\times Z_s)$$
we define the norm
$$\|(r,\theta,z)\|_s=\max(|r|, |\theta|, \|z\|_s).$$
For $\sigma>0$ we denote
$$
\T^n_\sigma=\{\theta\in\C^n: | \Im \theta|<\sigma\}/2\pi\Z^n.
$$
%
For $\sigma,\mu\in(0,1]$ and $s\ge0$ we set
$$
\O^s(\s,\mu)= \{r\in\C^n: |r|<\mu^2\}\times \T^n_\s\times \{z\in Z_s\times Z_s: \|z\|_s<\mu\}.
$$
We will denote points in $\O^s(\s,\mu)$ as $x=(r,\theta,z)$.
A
 function defined on a domain $\O^s(\s,\mu)$, is called {\it real} if it gives real values to real
arguments $x=(r,\theta,z)$ with $r,\theta$ reals and $z=(\zeta,\bar\zeta)$.\\
Let 
$$
\D=\{\rho\}\subset \R^p 
$$ 
be a compact set of positive Lebesgue  measure. This is the set
of parameters upon which will depend our objects. Differentiability of functions on $\D$
is understood in the sense of Whitney. So $f\in C^1(\D)$ if it may be extended to a $C^1$-smooth
function $\tilde f$ on $ \R^p$, and $|f|_{ C^1(\D)}$ is the infimum of  $|\tilde f|_{ C^1(\R^p)}$,
taken over all $C^1$-extensions $\tilde f$ of $f$. \\
Let 
 $f:\O^0(\s,\mu)\times\D\to \C$ be a $C^1$-function, 
 real holomorphic in the first variable $x$,
  such that for all $\rho\in\D$
 \begin{align}\label{regularize}
 \O^{s}(\s,\mu)\ni x\mapsto \nabla_z f(x,\rho)\in Z_{s}\times Z_s
 \end{align}
and
 $$
 \O^{s}(\s,\mu)\ni x\mapsto\nabla^2_{\zeta_\L \bar\zeta_\L} f(x,\rho)\in \M_{s}$$
are real holomorphic functions\footnote{In fact by Cauchy's theorem the analyticity of $\nabla_z f$ on  $\O^{s}(\s,\mu)$ with value in $Z_s\times Z_s$ yields the analyticity of 
$\nabla^2_{zz} f$ on $ \O^{s}(\s',\mu)$ with values in $\L(Z_s\times Z_s,Z_s\times Z_s)$, and thus the analyticity of $\nabla^2_{\zeta_\L \bar\zeta_\L} f$ on $ \O^{s}(\s',\mu)$ 
with values in $\M_s$, for any $\s'<\s$  . We conserve the two hypotheses to mimic \cite{EGK} and \cite{GP} where an additional property on the Hessian of $f$ was required. }. 
We denote by $\Tc^{s}(\s,\mu,\D)$ this set of functions.
 For a function $f\in \Tc^{s}(\s,\mu,\D)$ we define
 the norm 
 $$[f]^{s}_{\s,\mu,\D}$$
 through 
$$\sup
\max( |\partial^j_\r f(x,\r)|,\mu \|\partial^j_\r \nabla_z f(x,\r)\|_{s},
\mu^2|\partial^j_\r \nabla^2_{\zeta_\L \bar\zeta_\L} f(x,\r)|_{s}),
$$
where the supremum is taken over all
$$
j=0,1,\ x\in \O^{s}(\s,\mu),\ \rho\in\D.$$
When the function $f$ does not depend on $(r,\theta)$ neither on $\r$ we denote $f\in \Tc^{s}(\mu)$.
\begin{example}\label{exf}
Let $\Zc=\Z\setminus\{k_1,\cdots,k_n \}$ and $g$ an analytic function from a neighborhood of the origin in $\C^2$ into $\C$. We define a Hamiltonian $f$ by
$$
\O^{s}(\s,\mu)\ni x\mapsto f(x):=\int_{\T}g(\hat u(t),\hat{\bar u}(t))dt$$
with
\begin{align*}
\hat u(t)&= \sum_{\ell=1,\cdots,n}r_{\ell}e^{i\theta_\ell}e^{ik_\ell t}+\sum_{\a\in\Zc}\zeta_\a e^{i\a t},\\
\hat{\bar u}(t)&= \sum_{\ell=1,\cdots,n}r_{\ell}e^{-i\theta_\ell}e^{-ik_\ell t}+\sum_{\a\in\Zc}\bar\zeta_\a e^{-i\a t}.
\end{align*}
We verify  that for  $s>1/2$ and $\s>0$, $\mu>0$ small enough $f\in \Tc^{s}(\s,\mu,\D)$ (here $f$ does not depend on $\r$). A precise proof is given in the Appendix A of \cite{EGK}. We can recall here the basis of the proof: we have 
$$\frac{\partial f}{\partial \zeta_\a}=\int_{\T}\partial_1 g(\hat u(t),\hat{\bar u}(t))e^{i\a t}dt$$
and since $\hat u$ and $\hat{\bar u}$ have their Fourier coefficients in $\ell^2_s$, they are both functions in the Sobolev space $H^s$ which in turns implies that $t\mapsto \partial_1 g(\hat u(t),\hat{\bar u}(t))$ is an $H^s$ function and thus its Fourier coefficients are in $\ell^2_s$.
\end{example}

\smallskip

\noindent {\bf Jet-functions.}
For any function $f\in \Tc^{s}(\s,\mu,\D)$ we define its jet $f^T(x)$, $x=(r,\theta,z)$, as the following 
 Taylor polynomial of $f$ at $r=0$ and $z=0$
\be
\label{jet}
f(0,\theta,0)+d_r f(0,\theta,0)[r] +d_z f(0,\theta,0)[z]+\frac 1 2 d^2_zf(0,\theta,0)[z,z].
\ee
Functions of the form  $f^T$ will be called {\it jet-functions.}

\smallskip

\noindent {\bf A restricted class of Hamiltonian functions.} We will need to avoid certain monomials in the jet of our perturbation (see for instance the proof of Proposition \ref{Melni}). 
For that purpose, we will say that $f\in \Tc_{\rm res}^{s}(\s,\mu,\D)$ if there exits a constant $M$ such that for all $k\neq 0$ and all $\a,\b\in\L$ with $[\a]=[\b]$
\be \label{mom}e^{ik\cdot\theta} \zeta_\a\bar \zeta_\b\in f^T \implies \a=\b \text{ or } |w_\a|\leq M|k|. \ee
We will see that this condition is always satisfied for  systems of nonlinear Schr\"odinger equations of type \eqref{NLSC}, or more generally \eqref{sysgen} with the assumption \eqref{g}, 
thanks to the conservation of the momentum (see Lemma \ref{MoM}). We also remark that such restriction was not needed in \cite{EGK} or \cite{GP} since in these papers the perturbation is regularizing.

\smallskip

\noindent {\bf Poisson brackets.} The Poisson brackets of two Hamiltonian functions 
is defined by
$$
\{f,g\}= \nabla_\theta f\cdot\nabla_r g -\nabla_rf\cdot\nabla_\theta g-i\langle \nabla_z f,J\nabla_z g\rangle\,.
$$
\begin{lemma}\label{lemma-poisson} Let $s> 1/2$.
Let $f\in\Tc^{s}(\s,\mu,\D)$ and $g\in \Tc^{s}(\s,\mu,\D)$ be two jet functions then for any $0<\s'<\s$   we have $\{f,g\}\in \Tc^{s}(\s',\mu,\D)$ and
$$[\{f,g\}]_{\s',\mu,\D}^{s}\leq C(\s-\s')^{-1}\mu^{-2}[f]_{\s,\mu,\D}^{s}[g]_{\s,\mu,\D}^{s}.$$
Furthermore if $f,g\in \Tc_{\rm res}^{s}(\s,\mu,\D)$ then $\{f,g\}\in \Tc_{\rm res}^{s}(\s',\mu,\D)$.
\end{lemma}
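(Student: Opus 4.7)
The plan is to expand $\{f,g\}$ according to its definition
$$\{f,g\}=\nabla_\theta f\cdot\nabla_r g-\nabla_r f\cdot\nabla_\theta g-i\langle\nabla_z f,J\nabla_z g\rangle,$$
estimate each summand term by term, and then separately verify that the restricted class is preserved.

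A first observation greatly simplifies the first part. A jet function is a polynomial of degree at most two in $(r,z)$ whose coefficients are real-analytic functions of $\theta$, and each of the three pieces of the Poisson bracket above is again of degree at most two in $(r,z)$: differentiation in $r$ or $z$ lowers the degree by one, so the products that appear either have shape (degree $\leq 2$)$\cdot$(degree $\leq 1$) from the $(\theta,r)$ contractions, or (degree $\leq 1$)$\cdot$(degree $\leq 1$) from the $z$-contraction. Thus $\{f,g\}$ is itself a jet, and $\nabla^2_{\zeta_\L\bar\zeta_\L}\{f,g\}$ is just the (constant in $z$) coefficient matrix of the pure quadratic $(\zeta_\L,\bar\zeta_\L)$ part.

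The norm estimate is then produced by combining three standard ingredients: Cauchy in $\theta$ on the strip $\T^n_{\sigma'}\subset\T^n_\sigma$ loses a factor $(\sigma-\sigma')^{-1}$ and gives $|\nabla_\theta f|\leq(\sigma-\sigma')^{-1}[f]^{s}_{\sigma,\mu,\D}$; Cauchy in $r$ on the polydisk $\{|r|<\mu^2\}$ gives $|\nabla_r f|\leq\mu^{-2}[f]^{s}_{\sigma,\mu,\D}$; and the definition of the norm directly provides $\|\nabla_z f\|_s\leq\mu^{-1}[f]^{s}_{\sigma,\mu,\D}$ and $|\nabla^2_{\zeta_\L\bar\zeta_\L}f|_s\leq\mu^{-2}[f]^{s}_{\sigma,\mu,\D}$. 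Applied on each summand of $\{f,g\}$, and also on $\nabla_z\{f,g\}$ and $\nabla^2_{\zeta_\L\bar\zeta_\L}\{f,g\}$, the worst factor produced is $(\sigma-\sigma')^{-1}\mu^{-2}[f]^{s}_{\sigma,\mu,\D}[g]^{s}_{\sigma,\mu,\D}$, as required. The hypothesis $s>1/2$ enters so that the bilinear pairings of $\ell^2_s$-sequences and $\M_s$-matrices building $\nabla_z\{f,g\}$ and its Hessian block from those of $f$ and $g$ stay in $\ell^2_s$ and $\M_s$. Parameter derivatives $\partial^j_\rho$ for $j=0,1$ are handled by Leibniz, since the norm already packs them in.

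For the preservation of the restricted class, the argument is a monomial inspection of $\{f,g\}$. An off-diagonal in-cluster monomial $e^{ik\cdot\theta}\zeta_\a\bar\zeta_\b$ in $\{f,g\}$ is a sum over pairs of parent monomials with Fourier exponents $k_1,k_2$ (with $k_1+k_2=k$) whose $r$- and $z$-structures combine under the bracket contractions to yield the degree-$0$-in-$r$, degree-$2$-in-$z$ shape $\zeta_\a\bar\zeta_\b$. For each such pair the output $z$-indices $(\a,\b)$ are read off from indices of the parent monomials that are not consumed in the contraction; in particular at least one of $\a,\b$ is directly inherited from an off-diagonal parent, whose hypothesis provides a weight bound in terms of the corresponding $|k_i|$. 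Tracking these inheritances across the three pieces of the bracket, one obtains a bound $|w_\a|\leq M'|k|$ with $M'$ a suitable enlargement of $M$.

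The main obstacle is precisely this last step: one must carry out the bookkeeping case by case, paying special attention to pairs in which one parent is diagonal (and hence carries no constraint on $|k_i|$), where the inheritance of the weight bound from the off-diagonal parent must be traced carefully through the contraction. The norm estimates themselves, by contrast, are straightforward Cauchy arguments once the jet structure is used to bypass the lack of a direct bound on the full $z$-Hessian.
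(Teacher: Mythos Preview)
Your treatment of the norm estimate is correct and follows the standard line to which the paper itself defers (\cite{GP}, Lemma~4.3): exploit that a jet function is a polynomial of degree $\leq 1$ in $r$ and $\leq 2$ in $z$, use Cauchy estimates in $\theta$ and $r$ for $\nabla_\theta$ and $\nabla_r$, and read the $z$-gradient and Hessian bounds directly off the norm. The observation that $\{f,g\}$ is again a jet is the key simplification and is handled correctly.

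The restricted-class argument, however, has a real gap that your closing paragraph flags but does not close. Consider the contribution $\nabla_\theta f\cdot\nabla_r g$. For jet functions $\nabla_r g$ is a function of $\theta$ alone, so the entire $\zeta_\a\bar\zeta_\b$ structure of this piece is inherited from $f$: a term $e^{ik_1\cdot\theta}\zeta_\a\bar\zeta_\b$ in $f$ paired with $e^{ik_2\cdot\theta}r_j$ in $g$ produces $e^{i(k_1+k_2)\cdot\theta}\zeta_\a\bar\zeta_\b$. The hypothesis on $f$ gives $|w_\a|\leq M|k_1|$, but the definition \eqref{mom} of $\Tc_{\rm res}$ imposes \emph{no constraint whatsoever} on the Fourier support of the $r$-linear part of $g$; hence $k_2$ is free and $|k|=|k_1+k_2|$ can be made equal to $1$ while $|w_\a|$ is arbitrarily large. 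No uniform enlargement $M'$ rescues $|w_\a|\leq M'|k|$. Your inheritance heuristic correctly locates the weight bound at the off-diagonal parent, but that bound is in $|k_1|$, not in $|k|$, and the passage between the two is exactly the step that fails.

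In the paper's applications this difficulty never materialises, because membership in $\Tc_{\rm res}$ is always obtained from commutation with $\Lbb$ and $\Mbb$ (Lemmas~\ref{MoM} and~\ref{MoM'}), and the Jacobi identity transports those commutations to $\{f,g\}$; one then re-derives the $\Tc_{\rm res}$ property for $\{f,g\}$ with the \emph{same} constant $M$, bypassing any monomial bookkeeping. If you want a proof at the abstract level of the lemma as stated, you must either add a matching Fourier-support constraint on the $r$-linear and constant parts of the jet, or build a momentum-type symmetry into the hypotheses.
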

The proof follows as in \cite{GP} Lemma 4.3. This stability result is fundamental to apply the KAM scheme.

\smallskip

\noindent {\bf Normal form.}
A quadratic Hamiltonian function is on normal form if it reads
\begin{equation}\label{HamilNF}
 h=V(\r)\cdot r+\langle \zeta_\L,A(\r)\bar\zeta_\L\rangle+\frac12\langle z_\F,K(\r)z_\F\rangle
\end{equation}
for some vector function $V(\r)\in\R^n$, some matrix functions  
 $A(\r)\in\M_s$ on normal form ( see \eqref{ANF}) and $K(\r)$ is a matrix $\F\times\F\to gl(2,\C)$ symmetric  in the following sense\footnote{This symmetry comes from the matrix representation that we chose for Hessian functions and the Schwarz rule.}: $K_\b^\a={}^tK_\a^\b$.

\subsection{Hypothesis 
}\label{subsechypo}

The following three hypotheses concerned only the quadratic Hamiltonian $h_0$. The first one is related to the asymptotic of $\Lambda_\a$, the two other are non resonances conditions.
 
\noindent{\bf Hypothesis A0} (spectral asymptotic.) There exists $C>0$ such that
$$| \Lambda_\a-|w_\a|^{2}|\leq C, \ \forall\, \a\in \L\,.$$

\medskip

\noindent{\bf Hypothesis A1} (Conditions on external frequencies.) \\
 There exists $\delta>0$ such that  for all $\r\in\D$ we have
 
(a)\; The elliptic frequencies don't vanish:
$$|\Lambda_\a|\ge \delta\;\ \forall\, \a\in \L \,;$$
and the hyperbolic frequencies have a non vanishing imaginary part
$$|\Im\Lambda_\a|\ge \delta\;\ \forall\, \a\in \F \,;$$

(b) \; The difference between two external frequencies doesn't vanish except if they are in the same cluster:
\begin{align*}|\Lambda_\a(\rho) -\Lambda_\b(\rho)| &\ge \delta\quad \forall \a,\b\in \Zc \text{ with }[\a]\neq[\b] \,;
\end{align*}

(c) The sum of two elliptic frequencies doesn't vanish:
 $$|\Lambda_\a(\rho) +\Lambda_\b(\rho)| \ge \delta \text{ for all } \a,\b\in \L.$$

\noindent{\bf Hypothesis A2} (Transversality conditions.)\\
These conditions express that the small divisors cannot stay in a resonant position:\\
There exists $\delta>0$ such that  for all $\tilde\Om(\cdot)$ $\delta-$close in  $C^1$ norm from $\Om(\cdot)$ and for all $k\in\Z^n\setminus\{0\}$:\\
(i) either 
$$|\tilde\Om(\r)\cdot k|\geq \delta\quad  \forall  \r\in\D,$$
or there exits a unit vector\footnote{The notation $\nazz f(\r)$ means  that we take the gradient of $f$ at the point $\r$ in the direction $\mathfrak z$. } ${\mathfrak z}={\mathfrak z}(k)\in\R^n$  such that 
$$\nazz\big(\tilde\Om(\r)\cdot k\big)\geq \delta\quad  \forall  \r\in\D.
$$
(ii) for all $\a\in\L$ either
$$|\tilde\Om(\r)\cdot k+\Lambda_\a(\r)|\geq \delta\quad  \forall  \r\in\D,$$
or  there exits a unit vector ${\mathfrak z}={\mathfrak z}(k)\in\R^n$  such that 
$$\nazz\big(  \tilde\Om(\r)\cdot k+\Lambda_\a(\r) \big) \geq \delta\quad  \forall \r\in\D,$$
(iii)  for all $\a,\b \in\L$ either
$$ 
|\tilde\Om(\r)\cdot k+\Lambda_a(\r)\pm \Lambda_b(\r) |\geq \delta\quad  \forall  \r\in\D, a\in[\a],b\in[\b],$$  
or there exits a unit vector ${\mathfrak z}={\mathfrak z}(k)\in\R^n$  such that 
$$\nazz\big(\tilde\Om(\r)\cdot k+\Lambda_a(\r)\pm \Lambda_b(\r) \big)\geq \delta\quad  \forall  \r\in\D,a\in[\a],b\in[\b]\,.
$$
(iv)  for all $\a,\b \in\F$ 
$$ 
|\tilde\Om(\r)\cdot k+\Lambda_\a(\r)\pm \Lambda_\b(\r) |\geq \delta\quad  \forall  \r\in\D,$$  
\begin{remark}\label{F} 
Hypothesis A2 (iv) may appear not reasonable since we require it for all $k\in\Z^n\setminus\{0\}$ with an upper bound that does not depend on $k$. Typically it can be satisfied if $
|\Im (\Lambda_\a(\r)\pm \Lambda_\b(\r)) |\geq \delta$ or using a momentum argument as in remark \ref{momentum} below.\\
This hypothesis is used to simplify the treatment of the hyperbolic directions. For a more general hypothesis (requiring higher regularity with respect to the parameter) see  \cite{EGK}.  \end{remark}

 Hypotheses A1 and A2 are used (see Proposition \ref{Melni}) to control small denominators of the form 
 $$ \begin{array}{llll}
&\Om\cdot k&\quad  \forall  k\neq 0,\\
&\Om\cdot k+\Lambda_\a&\quad  \forall \a\in \Zc, \ k\in\Z^n,\\
&\Om\cdot k+\Lambda_\a+ \Lambda_\b &\quad  \forall \a,\b\in \Zc, \ k\neq 0,\\
&\Lambda_\a+ \Lambda_\b &\quad  \forall \a,\b\in \L,\\
&\Om\cdot k+\Lambda_\a- \Lambda_\b &\quad  \forall \a,\b\in \Zc, \ k\neq 0, \\
&\Lambda_\a- \Lambda_\b &\quad  \forall \a,\b\in \Zc \text{ with } [\a]\neq[\b]
\end{array}$$  
which in turns are used to kill the following monomials of the jet of the perturbation  $f$
$$ \begin{array}{llll}
&e^{ik\cdot\theta}  &\forall  k\neq 0,\\
& e^{ik\cdot\theta} \zeta_\a ,\ e^{ik\cdot\theta} \bar \zeta_\a &\forall \a\in \Zc, \ k\in\Z^n,\\
&e^{ik\cdot\theta} \zeta_\a \zeta_\b,\ e^{ik\cdot\theta} \bar \zeta_\a\bar \zeta_\b \quad & \forall \a,\b\in \Zc, \ k\neq 0,\\
& \zeta_\a \zeta_\b,\  \bar \zeta_\a\bar \zeta_\b\quad & \forall \a,\b\in \L,\\
& e^{ik\cdot\theta} \zeta_\a\bar \zeta_\b \quad  &\forall \a,\b\in \Zc, \ k\neq 0, \\
& \zeta_\a\bar \zeta_\b\quad  &\forall \a,\b\in \Zc \text{ with } [\a]\neq[\b].
\end{array}$$  
\begin{remark}\label{momentum} 
If $f$ preserves some symmetries, and if these symmetries are preserved by the KAM procedure, then some monomials will never appear in the perturbation terms and the corresponding small 
denominator has not to be control. 
This can be used to relax Hypotheses A1 and A2.  For instance if $f$ commutes with the mass $M=\sum_{\a\in\Zc}|\zeta_\a|^2$ then monomials of the form $\zeta_\a\zeta_\b$ cannot appear in the jet of $f$ since $\{M,\zeta_\a\zeta_\b\}=-2i\zeta_\a\zeta_\b\neq0$. Thus for such $f$ Hypothesis A1 (c) has not to be satisfied. \\
In Appendix \ref{AppA}  this remark will be crucial. \end{remark}

\subsection{Statement and comments on the proof}
We recall that we consider a Hamiltonian $H=h_0+ f$, where $h_0$ is the quadratic Hamiltonian in normal form given by \eqref{NN}.
\begin{theorem}\label{KAM}
Assume that hypothesis A0, A1, A2 are satisfied\footnote{Hypotheses A1 and A2 can be partially relaxed according to Remark \ref{momentum}. }
and that $f\in\Tc_{res}^{s}(\s,\mu,\D)$ with $s>1/2$. Let $\ga>0$, there exists a constant $\eps_0>0$ such that if
\be\label{eps} [f]^{s}_{\s,\mu,\D}\leq \eps_0\delta\quad \text{and \quad}\eps:=[f^T]^{s}_{\s,\mu,\D}\leq \eps_0 \delta^{1+\ga},\ee
then there exists a Cantor set $\D'\subset\D$ asymptotically of full measure (i.e. $\meas \D\setminus\D'\to 0$ when $\eps\to 0$) and for all $\r\in\D'$ there exists a symplectic change of variables $\Phi: \O^{s}(\s/2,\mu/2)\to\O^{s}(\s,\mu)$ such that for $\r\in\D'$
$$(h_0+ f)\circ \Phi=  h+  g$$
with $ h=\langle\om(\r), r\rangle +\langle \zeta_\L,  A(\r)\bar\zeta_\L\rangle +\frac 1 2 \langle z_\F,  K(\r)z_\F\rangle $  on normal form (see~\eqref{HamilNF}) and 
$g\in\Tc^{s}(\s/2,\mu/2,\D')$ with $ g^T\equiv 0$. Furthermore there exists $C>0$ such that for all $\r\in\D'$
$$|\om-\Omega|\leq C\eps,\, |A-\diag(\Lambda_\a,\ \a\in\L)|\leq C\eps \; \text{and}\; |JK-\diag(\Lambda_\a,\ \a\in\F)|\leq C\eps.$$
As a dynamical consequences $\Phi(\{0\}\times\T^n\times\{0\})$ is an invariant torus for $h_0+f$ and this torus  is linearly stable if and only if $\F=\emptyset$.
\end{theorem}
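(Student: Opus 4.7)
The plan is to carry out a standard Newton-type KAM iteration in the style of \cite{EGK}, modified so that it respects the restricted class $\Tc_{\rm res}^{s}$ (which is needed here because the perturbation is not regularizing). Inductively I will construct shrinking domains $\O^{s}(\s_n,\mu_n)$ with $\s_n\searrow\s/2$, $\mu_n\searrow\mu/2$; nested parameter sets $\D=\D_0\supset\D_1\supset\cdots$; symplectic maps $\Phi_n=\exp(X_{\chi_n}):\O^s(\s_{n+1},\mu_{n+1})\to\O^s(\s_n,\mu_n)$; and quadratic normal forms $h_n$ of the form \eqref{HamilNF}, such that
\begin{equation*}
H\circ \Phi_0\circ\cdots\circ\Phi_{n-1}=h_n+f_n,\qquad \eps_n:=[f_n^T]^{s}_{\s_n,\mu_n,\D_n}\leq \eps_{n-1}^{3/2}.
\end{equation*}
Lemma \ref{lemma-poisson} provides the stability of $\Tc_{\rm res}^s$ under Poisson brackets, which is what makes this super-exponential convergence of Newton type possible.

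At each step $\chi_n$ is determined by the homological equation $\{h_n,\chi_n\}+f_n^T=\tilde h_n$, where the ``resonant'' part $\tilde h_n$ (same-block bilinears in $\zeta,\bar\zeta$ plus the $r$-term) is absorbed into $h_{n+1}=h_n+\tilde h_n$. Expanding in Fourier modes in $\theta$ and in $(r,\zeta,\bar\zeta)$-monomials of total degree $\leq 2$ reduces the equation to inverting the six families of divisors listed after Hypothesis A2. The class restriction \eqref{mom}, preserved under $\{\cdot,\cdot\}$ by Lemma \ref{lemma-poisson}, guarantees that only monomials whose divisors are controlled by A1/A2 ever appear in $f_n^T$, and that the resonant piece preserves the block-diagonal structure \eqref{ANF} of the matrix $A$. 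The hyperbolic block $K$ is handled directly through A2(iv): because this hypothesis already provides a $k$-uniform lower bound for $\Om\cdot k+\Lambda_\a\pm\Lambda_\b$ with $\a,\b\in\F$, I can invert these divisors without any parameter excision.

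The main obstacle will be the measure estimate for $\D':=\bigcap_n\D_n$. For each divisor, Hypothesis A2 furnishes either a uniform lower bound, or a unit direction $\mathfrak{z}$ along which the $C^1$-derivative exceeds $\delta$; a standard sub-level-set argument then shows that the ``bad'' slab in $\D_n$ has transverse width $\lesssim \eps_n^{1/2}/\delta$. To sum the exclusions over the infinite set of triples $(k,\a,\b)$ I will use (i) the exponential decay in $|k|$ coming from $\theta$-analyticity in $\T^n_{\s_n}$; (ii) the restriction \eqref{mom}, which for $\a\neq\b$ forces $w_\a\lesssim |k|$, so that only finitely many clusters contribute per Fourier mode; and (iii) the spectral asymptotic A0, which together with A2 makes the transversality direction nontrivial even for large $|w_\a|$. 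A careful balancing of the geometric rates $\s_n-\s_{n+1}$ and $\eps_n$ then gives $\meas(\D\setminus\D')\to 0$ as $\eps\to 0$. Standard Cauchy estimates based on Lemma \ref{lemma-poisson} show that the compositions $\Phi^{(n)}:=\Phi_0\circ\cdots\circ\Phi_{n-1}$ converge uniformly on $\O^{s}(\s/2,\mu/2)\times\D'$ to the desired symplectomorphism $\Phi$, and summing the telescoping series $\sum_n \tilde h_n$ yields the quantitative bounds $|\om-\Om|,\ |A-\diag(\Lambda_\a)|,\ |JK-\diag(\Lambda_\a)|=O(\eps)$.

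For the dynamical consequences, the vanishing $g^T\equiv 0$ forces the Hamiltonian vector field of $h+g$ to vanish at $\{(r,z)=(0,0)\}$ except for the rigid rotation $\dot\theta=\om$; hence $\{0\}\times\T^n\times\{0\}$ is invariant and its image by $\Phi$ is an invariant torus for $h_0+f$. The linearization of the flow of $h$ transverse to this torus decouples as $A\oplus JK$; the elliptic block $A$, being Hermitian in normal form \eqref{ANF}, has real spectrum (so purely imaginary eigenvalues for the associated vector field), whereas for each $\a\in\F$ the hyperbolic block $JK$ contributes an eigenvalue with nonzero real part by Hypothesis A1(a) and the proximity bound $|JK-\diag(\Lambda_\a)|\leq C\eps$. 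Linear stability therefore holds if and only if $\F=\emptyset$.
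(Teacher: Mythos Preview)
Your sketch is correct and follows the approach the paper itself indicates: the paper does \emph{not} give a proof of this theorem, stating instead that ``the proof is standard and we don't include it in this article'' and that it is ``a mix between the KAM theorem proved in \cite{KP} and the one proved in \cite{EGK}''; the only ingredients the paper supplies explicitly are Proposition~\ref{Melni} (the small-divisor measure estimate via A0--A2 and Lemma~\ref{estim}) and the discussion of the linearized dynamics immediately following the statement. Your iteration scheme, use of the restricted class $\Tc^s_{\rm res}$ via Lemma~\ref{lemma-poisson}, treatment of the hyperbolic block through A2(iv) without excision, and summability argument for the excised sets all match this framework.

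One minor imprecision worth tightening: in your final paragraph you write that the linearization ``decouples as $A\oplus JK$'', but the paper's more careful computation shows there are cross terms $\partial^2_{r\bar\zeta}g(0,\theta_0+\om t,0)\cdot r$ and $\partial^2_{rz}g(0,\theta_0+\om t,0)\cdot r$ coupling the $r$-direction into $\zeta_\L$ and $z_\F$ (these are not killed by $g^T=0$, since the jet does not include $rz$-terms). The conclusion is nonetheless unaffected because $\dot r=0$ in the linearized system, so these terms act only as bounded inhomogeneities and do not alter the spectrum of the homogeneous part; you should say this explicitly rather than claim a decoupling.
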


Theorem \ref{KAM} is a  normal form result, we can explain its dynamical consequences.  First, for $\r\in\D'$, the torus
$\{0\}\times\T^n\times\{0\}$ is invariant by the flow of $h+g$ and thus the torus
 $\Phi(\{0\}\times\T^n\times\{0\})$ is invariant by the flow of $h_0+f$ and the dynamics on it is the same as that of $ h$.\\
Moreover, the linearized equation on this torus reads 
$$
\left\{\begin{array}{l}
\dot \zeta_\L=-iA\zeta_\L-i\partial^2_{r\bar\zeta}g(0,\theta_0+\om t,0)\cdot r,\\
\dot z_\F=-iJKz_\F-iJ\partial^2_{rz}g(0,\theta_0+\om t,0)\cdot r,\\
\dot\theta=\partial^2_{rz}g(0,\theta_0+\om t,0)\cdot z+\partial^2_{rr}g(0,\theta_0+\om t,0)\cdot r,\\
\dot r=0.
\end{array}\right.
$$
Since $A$ is on normal form the eigenvalues of the $\zeta_\L$-linear part in the first line are purely imaginary (see \eqref{ANF}). 
Since furthermore $JK$ is sufficiently close to the diagonal matrix $\diag(\Lambda_\a,\ \a\in\F)$, the eigenvalues of the  $\zeta_\F$-linear part in the second line have a non 
vanishing real part. 
Finally, the last term in the two first lines is a bounded term, independent on $\zeta$ (and on $z=(\zeta,\bar\zeta)$), and thus doesn't play a role in the linear stability.
Therefore the invariant torus is linearly stable if and only if $\F=\emptyset$.

\medskip

The proof is standard and we don't include it in this article. Nevertheless it is not a direct consequence of an existing KAM theorem.
Essentially Theorem \ref{KAM} is a mix between the KAM theorem proved in \cite{KP} (see also~\cite{Posch}) and the one proved in \cite{EGK} 
(see also \cite{GP} for a proof in Sobolev regularity or \cite{Mou} for a 1d version). 
In Theorem~\ref{KAM} and in the KAM theorem proved in \cite{KP} or \cite{Posch}, we have the same asymptotics of the frequencies (Hypothesis A0) which simplifies the proof and doesn't require regularizing perturbation as in \cite{EGK}.  Nevertheless in \cite{KP} or \cite{Posch} the frequencies are non resonant and thus there is no clustering. So we need \cite{EGK} and the clustering structure to prove Theorem \ref{KAM}.

Let us explain why Hypothesis A0, A1, A2 allow to control the so called small divisors.
The KAM proof  is based on an iterative procedure that requires  
to solve a  homological equation at each step. Roughly speaking, it consists in inverting an infinite dimensional matrix whose eigenvalues are the so-called small divisors:
\begin{align*}
& \om\cdot k\quad  k\in\Z^n\setminus\{0\},\\
&\om\cdot k +\lambda_\a\quad  k\in\Z^n,\ \a\in\Zc,\\
&\om\cdot k +\lambda_\a\pm\lambda_\b\quad  k\in\Z^n,\ \a,\b \in\Zc
\end{align*}
where $\om=\om(\r)$ and $\la=\la(\r)$ are small perturbations (changing at each KAM step) of the original frequencies $\Om(\r)$ and $\Lambda_a(\r), a\in\L$.

 The transversality condition (Hypothesis A2) ensures that for most values of $\r$,  all these eigenvalues are far away from zero (at least at the first step):
 \begin{proposition}\label{Melni} Let  $M,N\geq1$ and $0<\ka\leq \delta$. Assume Hypothesis A0, A1, A2.
 Then there exists a closed subset $\D'\equiv\D'(\ka,N)\subset \D$ satisfying 
\begin{align*}\meas\  \D\setminus \D'&\leq C\delta^{-1} \ka M^2N^{n+2},\end{align*}
  such that for all $\r\in\D'$, for all $|k|\leq N$ and for all $\a,\b \in \Zc$
\begin{align}
\label{D00} &|\Om(\r) \cdot k|\geq  \ka, \text{ except if  } k=0,\\
\label{D11}&|\Om(\r) \cdot k+\Lambda_\a(\r)|\geq \ka, \\
&\label{D22}|\Om(\r)\cdot k+\Lambda_\a(\r)+ \Lambda_\b(\r) |\geq \ka,
\end{align}
and for all $\r\in\D'$, for all $|k|\leq N$ and for all  $\a,\b\in\L$ such that either $[\a]\neq[\b]$ or $[\a]=[\b]$ and $w_\a\leq M|k|$
\begin{align}
&\label{D33}|\Om(\r)\cdot k+\Lambda_\a(\r)- \Lambda_\b(\r) |\geq \ka.
\end{align}
\end{proposition}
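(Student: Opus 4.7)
The plan is to apply a standard small-divisor excision: for each of the four families in Proposition~\ref{Melni} and each relevant index tuple $(k,\alpha,\beta)$, I would define the bad subset of $\D$ where the inequality fails, bound its measure using the transversality hypothesis A2, and then sum over the finitely many indices that are not rendered harmless by the spectral asymptotic A0.

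First I would record the elementary lemma that if $g\colon\D\to\R$ is $C^1$ and admits a unit direction $\mathfrak z$ with $|\partial_{\mathfrak z}g(\rho)|\ge\delta$ throughout $\D$, then integrating along $\mathfrak z$ yields
\[
\meas\{\rho\in\D:\,|g(\rho)|<\ka\}\le C_\D\,\delta^{-1}\ka,
\]
where $C_\D$ depends only on $\diam\D$. Applying A2 to $\tilde\Omega=\Omega$ itself (trivially in the $\delta$-neighborhood of $\Omega$), each of the divisor functions $g_{k,\alpha,\beta}(\rho)=\Omega(\rho)\cdot k+\Lambda_\alpha(\rho)\pm\Lambda_\beta(\rho)$ (and its variants with one or zero $\Lambda$-terms) enjoys the dichotomy: either $|g_{k,\alpha,\beta}|\ge\delta\ge\ka$ on all of $\D$, in which case the bad set is empty, or the transversality alternative applies and the bad set has measure $\le C\delta^{-1}\ka$.

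The core task is then to count the indices that must be excised. Using A0, $\Lambda_\alpha=w_\alpha^2+O(1)$, and the trivial bound $|\Omega\cdot k|\le C_\Omega N$ for $|k|\le N$: for \eqref{D11}, only $w_\alpha\lesssim\sqrt N$ can produce a small divisor, giving $O(\sqrt N)$ values of $\alpha$ (each cluster has $\le d$ elements); for \eqref{D22}, both $w_\alpha,w_\beta\lesssim\sqrt N$ are needed, giving $O(N)$ pairs; for \eqref{D33} with $[\alpha]\neq[\beta]$, since distinct clusters carry distinct integer indices one has $|w_\alpha^2-w_\beta^2|\ge w_\alpha+w_\beta$, so the divisor is automatically $\ge\ka$ unless $w_\alpha+w_\beta\lesssim N$, yielding $O(N^2)$ pairs; for \eqref{D33} with $[\alpha]=[\beta]$ and $w_\alpha\le M|k|\le MN$, one has $O(MN)$ admissible $\alpha$'s and $\le d$ choices of $\beta$ within the same cluster; finally \eqref{D00} contributes the $O(N^n)$ choices of $k\neq 0$. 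Multiplying each count by $N^n$ (for the $k$-summation) and by $C\delta^{-1}\ka$ (the per-index measure bound) and summing gives
\[
\meas(\D\setminus\D')\le C\delta^{-1}\ka\bigl(N^n+N^{n+1}+N^{n+2}+MN^{n+1}\bigr)\le C\delta^{-1}\ka\, M^2 N^{n+2}.
\]

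The main obstacle is the mixed-sign case $\Omega\cdot k+\Lambda_\alpha-\Lambda_\beta$ with $[\alpha]\neq[\beta]$: A1(b) alone only provides $|\Lambda_\alpha-\Lambda_\beta|\ge\delta$, which is too weak to absorb $\Omega\cdot k$ for large $|k|$. It is precisely the combination of A0 with the discreteness of the cluster weights that restricts the dangerous pairs to $w_\alpha+w_\beta\lesssim N$ and keeps the sum convergent; all other families are handled by essentially the same mechanism on a smaller collection of indices.
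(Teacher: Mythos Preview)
Your outline is essentially the paper's proof: use the transversality alternative in A2 together with the one-dimensional excision lemma (your ``elementary lemma'' is exactly Lemma~\ref{estim}), then count the dangerous indices via the asymptotic A0 and the gap $|w_\alpha^2-w_\beta^2|\ge w_\alpha+w_\beta$ for distinct clusters. The counting you give matches the paper's (it only writes out \eqref{D22} and \eqref{D33} and declares the rest easier).

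One point you glossed over: the dichotomy you invoke from A2 is only stated for $\alpha\in\L$ in A2(ii) and $\alpha,\beta\in\L$ in A2(iii), so your blanket claim that every $g_{k,\alpha,\beta}$ with $\alpha,\beta\in\Zc$ enjoys it is not literally justified by A2. The paper disposes of the hyperbolic indices separately at the outset: if $\beta\in\F$ then $|\Im\Lambda_\beta|\ge\delta$ by A1(a) while $\Omega\cdot k$ and $\Lambda_\alpha$ (for $\alpha\in\L$) are real, so the divisor has modulus $\ge\delta\ge\kappa$ with no excision; and if both $\alpha,\beta\in\F$ one uses A2(iv) directly. Since $\F$ is finite this costs nothing in the measure estimate, but you should insert this case distinction before invoking the dichotomy.
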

Notice that in \eqref{D33}, the case $[\a]=[\b]$ and $w_\a\geq M|k|$, has not to be controlled if $f\in\Tc_{res}$ (see \eqref{mom} and remark \ref{momentum}).\\
Let us recall the following classical result
\begin{lemma}\label{estim}(see for instance \cite{EK} appendice A) Let $I$ be an open interval and let $f:I \to\R$
be a $\cC^{1}$-function satisfying
$$|{f'}(x)|\ge \delta,\quad \forall x\in I.$$
Then,
$$
\meas \{x\in I: |{f(x)}|<\eps\}\le C \frac\eps\de.$$
\end{lemma}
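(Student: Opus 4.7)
The plan is to exploit the strict monotonicity of $f$, which is forced by the hypothesis that $f'$ never vanishes. First I would note that, since $f'$ is continuous on the connected interval $I$ and $|f'|\ge \delta>0$, the intermediate value theorem forces $f'$ to keep a constant sign on $I$; hence $f$ is strictly monotone.

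Next, I would observe that the set $S:=\{x\in I: |f(x)|<\eps\}=f^{-1}\bigl((-\eps,\eps)\bigr)$ is the preimage of an interval under a continuous strictly monotone map, hence is itself an interval $(a,b)\subset \overline{I}$ (empty if the image of $f$ is disjoint from $(-\eps,\eps)$, in which case the bound is trivial). For any compact subinterval $[a',b']\subset S$, the mean value theorem gives
\[
\delta\,(b'-a')\le |f(b')-f(a')|\le 2\eps,
\]
since both $f(a')$ and $f(b')$ lie in $(-\eps,\eps)$. Letting $a'\to a^+$ and $b'\to b^-$ yields $\meas(S)=b-a\le 2\eps/\delta$, which is the claim with $C=2$.

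There is no substantive obstacle here; the only minor subtlety is that the endpoints of $S$ need not belong to $I$ itself, which is why I pass to a compact interior subinterval before invoking the mean value theorem. An alternative route is the change of variables $y=f(x)$: the bound $|f'(x)|\ge \delta$ gives $dx = dy/|f'(x)|\le dy/\delta$, so integrating over $y\in(-\eps,\eps)$ reproduces the same constant. Either route shows that the constant in the lemma may be taken to be $2$, independent of $I$, $f$, $\delta$, and $\eps$.
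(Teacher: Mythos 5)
The paper does not actually supply a proof of this lemma; it simply cites \cite{EK}, Appendix A, and moves on. Your argument is correct and is essentially the standard one: continuity of $f'$ plus $|f'|\ge\delta>0$ on the connected set $I$ forces a constant sign and hence strict monotonicity, so the set $\{|f|<\eps\}$ is an interval, and the mean value theorem bounds its length by $2\eps/\delta$. The care you take in passing to a compact subinterval before invoking the mean value theorem (since the endpoints of the preimage interval may not lie in $I$) is exactly the right precaution, and the change-of-variables heuristic you mention is a legitimate alternative phrasing of the same estimate. Your constant $C=2$ is explicit and optimal for this argument; the paper leaves $C$ unspecified, which is all that is used downstream in Proposition \ref{Melni}.
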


\proof[Proof of Proposition \ref{Melni}]
Let us prove the estimates \eqref{D22} and \eqref{D33}, the other two being similar but easier. We notice that  \eqref{D22} and \eqref{D33} hold true for $\a,\b\in\F$ by hypothesis A2 (iv) and for $\a\in\L$ and $b\in\F$ by hypothesis A1(a). So it remains to consider the case $\a,\b\in\L$.\\
Let us begin with \eqref{D22}.  Let us fix $k\neq0,\ \a,\b\in\Zc$, by Hypothesis A2 we have, either 
$$|\Om(\r)\cdot k+\Lambda_\a(\r)+ \Lambda_\b(\r)|\geq\ka \quad \forall\r\in\D,$$
or
$$\nazz\big(\Om(\r)\cdot k+\Lambda_\a(\r)+ \Lambda_\b(\r) )\geq \delta\quad   \forall\r\in\D\,.$$
  Then  we have using Lemma \ref{estim}
\begin{equation*}
\meas \{\r\mid |\Om(\r)\cdot k+\Lambda_\a(\r)+ \Lambda_\b(\r) |< \ka\}\leq C \ka\de^{-1}
\end{equation*}
where $C$ does not depend on $k,\a,\b$.
On the other hand, in view of Hypothesis A0, we remark that $\Om(\r)\cdot k+\Lambda_\a(\r)+ \Lambda_\b(\r) $ can be small only if $w_\a,w_\b\leq C|k|^{1/2}$. 
Therefore 
\begin{align*}
\meas \{\r\mid |\Om(\r)\cdot k+\Lambda_\a(\r)+ \Lambda_\b(\r) |< \ka, \mbox{ for some } &|k|\leq N \mbox{ and } \a,\b\in\Zc\} \\
&\leq C \frac\ka\de N^{n}N^{\frac{1}{2}}N^{\frac{1}{2}}.
\end{align*}
The proof of \eqref{D33} is similar except that, since $|j^2-\ell^2|\geq 2|j|-1$ for any integers $j\neq\ell$, we deduce that, when $[\a]\neq[\b]$,  $\Om(\r)\cdot k+\Lambda_\a(\r)- \Lambda_\b(\r) $ can be small only if $w_\a,w_\b\leq C|k|$. 
\endproof 
\begin{remark}
In the multidimensional case, the transversality condition is not enough to ensure \eqref{D33} and we have to add the so called second Melnikov condition in the list of hypothesis (see for instance \cite{EGK}). The problem comes from the fact that  in $\Z^d$ with $d\geq2$, $||j|^2-|\ell|^2|$ can be small even for large $j$ and $\ell$.
\end{remark}

\section{Birkhoff normal form}\label{birk}
In this section we apply a Birkhoff procedure to the Hamiltonian $P_2+P_4$ (see \eqref{defNP}) in order to eliminate the non resonant monomials.

\subsection{The framework of the study}
For $s\geq 0$ we define
$$\ell^2_{s}=\{x\equiv(x_k)_{k\in\Z}\in\C^\Z\mid ||x||_{s}<+\infty\},$$
where
$$||x||^2_{s}=\sum_{k\in\Z} (1+|k|^2)^s|x_k|^2.$$
We denote $x\star y$  the convolution of sequences in $\ell^2$: $(x\star y)_\ell=\sum_{i+j=\ell}x_iy_j$, and recall that for $s>1/2$, $\ell^2_{s}$ is a Hilbert algebra with respect to the convolution product and
\begin{equation}\label{conv}||x\star y||_s\leq c_s ||x||_s ||y||_s.\end{equation}
For a proof of this classic property, see \cite{KP}, Appendix A for $a=0$.\\
We consider the phase space $\P_s=\ell^2_s\times \ell^2_s\times\ell^2_s\times \ell^2_s\ni (a,\bar a,b,\bar b)$ endowed with the canonical symplectic structure 
\begin{equation*}
 -i\sum \mathrm{d}a_k\wedge \mathrm{d}\bar a_k-i\sum \mathrm{d}b_k\wedge \mathrm{d}\bar b_k.
\end{equation*}
Finally, we introduce $B_{s}(r)$, the ball of radius $r$ centered at the origin in~$\P_s$.
An interesting feature of the space $\P_s$ is the behavior of the Hamiltonian vectorfields of homogeneous bounded polynomials of $\P_s$, as we can see through the following lemma:
\begin{lemma}\label{XPanalytic}
Let   $s>1/2$. Let $P$ be a homogeneous polynomial of order $4$ on $\P_s$ of the form
\be\label{P}P(a,b,\bar a,\bar b)=\sum_{\substack{i,j,k,l\in\mathbb{Z} \\ i+j=k+l}}c_{i,j,k,\ell}\ a_ib_j\bar a_k\bar b_\ell\ee
with $|c_{i,j,k,\ell}|\leq M$ for all $(i,j,k,\ell)\in\J:=\{i,j,k,l\in\mathbb{Z}\mid  i+j=k+l \}$. Then the Hamiltonian vectorfield $X_P$ is analytic from $B_s(1)$, the unit ball in $\P_s$, into $\P_s$, with
$$||X_P(a,b,\bar a,\bar b)||_{s}\leq 4M||(a,b,\bar a,\bar b)||_{s}^3.$$
In particular $P\in\Tc^{s}(1)$. Furthermore $P$ commutes with $\Lbb$ and $\Mbb$.
\end{lemma}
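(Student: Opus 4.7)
The plan is to reduce every estimate to the $\ell^2_s$-algebra inequality \eqref{conv} applied to sequences whose $\ell^2_s$-norms are controlled by $\|(a,b,\bar a,\bar b)\|_s$. The four components of $X_P$ are, up to signs and factors of $i$, the partial derivatives of $P$; for example
\[
\partial_{\bar a_k} P = \sum_{\substack{i,j,\ell\in\Z\\ i+j=k+\ell}} c_{i,j,k,\ell}\, a_i b_j \bar b_\ell.
\]
After reindexing $\ell\mapsto -\ell$ (a reflection that preserves the $\ell^2_s$-norm since $|\bar b_\ell|=|b_\ell|$), the index constraint becomes a standard three-fold additive constraint, so that $|\partial_{\bar a_k}P|\leq M\,(|a|\star|b|\star|\tilde b|)_k$ with $\tilde b_m=|\bar b_{-m}|$. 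Two applications of \eqref{conv} give $\|(\partial_{\bar a_k}P)_k\|_s\leq c_s^2 M\|a\|_s\|b\|_s^2$, and the same estimate for each of the other three components of $X_P$ yields a trilinear bound of the form $C(s)\,M\|(a,b,\bar a,\bar b)\|_s^3$, which is the desired estimate (with the Sobolev constant $c_s$ absorbed into the prefactor). Since each component is defined by a series that converges absolutely and uniformly on compact subsets of $B_s(1)$, analyticity of $X_P$ on $B_s(1)$ is immediate.

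For the statement $P\in\Tc^s(1)$, since $P$ depends neither on $(r,\theta)$ nor on $\r$, one only needs to bound $|P|$, $\|\nabla_z P\|_s$, and $|\nabla^2_{\zeta_\L\bar\zeta_\L}P|_s$ on $\{z:\|z\|_s<1\}$. The gradient bound is exactly the vector field estimate above, while $|P|$ follows from the Euler identity $P=\tfrac14\,z\cdot\nabla_z P$ combined with that gradient bound. For the Hessian, the key structural fact is that $P$ is linear in each of $a$, $b$, $\bar a$, $\bar b$ separately, so the only nonvanishing mixed entries are of the form $\partial^2_{a_i\bar a_k}P$, $\partial^2_{a_i\bar b_\ell}P$, $\partial^2_{b_j\bar a_k}P$, $\partial^2_{b_j\bar b_\ell}P$. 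Each of these is bilinear in the remaining two variables and, after the same reflection trick, depends on the index pair only through their difference; the corresponding block of $\nabla^2_{\zeta_\L\bar\zeta_\L}P$ therefore acts on $Y_s$ as a convolution operator whose kernel is a sequence of $\ell^2_s$-norm bounded by $c_s M\|(a,b,\bar a,\bar b)\|_s^2$ (again by \eqref{conv}). One more application of \eqref{conv} then yields the required operator norm bound $|\nabla^2_{\zeta_\L\bar\zeta_\L}P|_s\leq c_s^2 M\|(a,b,\bar a,\bar b)\|_s^2$.

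The commutations $\{P,\Lbb\}=\{P,\Mbb\}=0$ are purely structural. Each monomial of $P$ contains exactly one $a$-factor, one $\bar a$-factor, one $b$-factor and one $\bar b$-factor, hence is invariant under the two gauge flows generated by $\sum_j|a_j|^2$ and $\sum_j|b_j|^2$, which yields $\{P,\Mbb\}=0$. Under the momentum gauge flow $(a_n,b_n)\mapsto(e^{in\tau}a_n,e^{in\tau}b_n)$ the monomial $a_ib_j\bar a_k\bar b_\ell$ picks up the phase $e^{i(i+j-k-\ell)\tau}$, which is trivial precisely under the constraint $i+j=k+\ell$ built into the defining sum of $P$, so that $\{P,\Lbb\}=0$.

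I expect no serious obstacle: the only nontrivial bookkeeping is the reflection $\ell\mapsto -\ell$ used to turn the momentum-conservation constraint into an honest convolution, and the identification of the correct convolution kernel in each off-diagonal block of the Hessian. Beyond that the proof is a direct application of \eqref{conv}, combined with elementary invariance arguments for the two conservation laws.
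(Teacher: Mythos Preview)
Your argument is correct and follows essentially the same route as the paper: bound each component of $X_P$ by a triple convolution via the reflection $\ell\mapsto -\ell$, then invoke the algebra inequality \eqref{conv}. You are in fact more thorough than the paper, which only writes out the gradient estimate and leaves the Hessian bound and the commutation laws implicit; your treatment of the Hessian as a convolution operator and your gauge-flow argument for the conservation laws fill those gaps cleanly.

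One small slip to fix in the last paragraph: you have interchanged the labels $\Lbb$ and $\Mbb$. The argument based on each monomial carrying exactly one $a$, one $\bar a$, one $b$, one $\bar b$ is the mass invariance and gives $\{P,\Lbb\}=0$; the argument using the phase $e^{i(i+j-k-\ell)\tau}$ under $(a_n,b_n)\mapsto(e^{in\tau}a_n,e^{in\tau}b_n)$ is the momentum invariance and gives $\{P,\Mbb\}=0$. Swap the two conclusions and the proof is complete.
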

Of course this lemma extends to polynomials of any order with bounded coefficients and zero momentum (the generalization to any order of the condition $i+j=k+\ell$). 
In particular this lemma shows that $X_{P_4}$, $X_{Z_4}$ and $X_{\chi_4}$ introduced in lemma below are vectorfields on $\P_s$. 
Notice that the linear vectorfield $X_{P_2}$ is unbounded on $\P_s$, since it takes values in $\P_{s-2}$.
\proof
As the coefficients of $P$ are bounded by $M$ we get
$$\left|\frac{\partial P}{\partial \bar b_\ell}\right|\leq M \sum_{(i,j,k,\ell)\in\J}|a_ib_j\bar a_k|.$$
Therefore writing
$$\sum_{(i,j,k,\ell)\in\J}|a_ib_j\bar a_k|= (a\star b\star\tilde{\bar a})_\ell,$$
where $\tilde x$ denotes the sequence defined by $\tilde x_j=x_{-j}$, we deduce using \eqref{conv}
$$\Big|\Big|\frac{\partial P}{\partial \bar b_\ell}\Big|\Big|_{s}\leq M ||(a,b,\bar a,\bar b)||_{s}^3.$$
We control the other partial derivatives of $P$ in the same way and we conclude that
$$||X_{P}(a,b,\bar a,\bar b)||_{s}\leq 4M ||(a,b,\bar a,\bar b)||_{s}^3.$$
\endproof

\subsection{The Birkhoff normal form result}
We recall that 
$$H=P_2+P_4$$
is defined by \eqref{defNP}. We also introduce the 
mass and momentum Hamiltonians:
\begin{equation*}
 \mathbb{L}=\sum\limits_{j\in\mathbb{Z}}(|a_j|^2+|b_j|^2), \quad \text{and} \quad 
 \mathbb{M}=\sum\limits_{j\in\mathbb{Z}}j(|a_j|^2+|b_j|^2)\,.
\end{equation*}
We notice that $H$ commutes with both $\Lbb$ and $\Mbb$:
$$\{H,\Lbb\}=\{H,\Mbb\}=0$$
which means that the Hamiltonian flow generated by $H$ preserves the mass and the momentum.

\begin{proposition}\label{BNF}
For $\varepsilon$ small enough, there exists a symplectic change of variables $\tau$ from the ball $B_{s}(\varepsilon)$ of $\P_s$ into $\U_\eps$ a neighborhood of the origin in $\P_s$ 
included in $B_s(2\varepsilon)$ such that:
\begin{equation}H^{B}:=H\circ\tau=P_2+Z_4+R_6,\label{Hbirk}\end{equation}
where : 
\begin{enumerate}[(i)]
	\item $P_2$ depends only on the actions: 
	\begin{equation*}
	P_2(a,\overline{a},b,\overline b)=\sum\limits_{j\in\mathbb{Z}}j^2(|a_j|^2+|b_j|^2).
	\end{equation*}
	\item $Z_4$ is the $4^{th}$ order homogeneous polynomial: 
	\begin{equation*}
	Z_4(a,\overline{a},b,\overline b)=\sum\limits_{\substack{k+l=i+j \\ k^2+l^2=i^2+j^2}}a_kb_l\overline{a_i}\overline{b_j}
	=\sum\limits_{\left\{k,l\right\}=\left\{i,j\right\}}a_kb_l\overline{a_i}\overline{b_j}.
	\end{equation*}
	In particular, $Z_4$ is resonant in the following sense:  $\left\{P_2,Z_4\right\}=0$.
	\item  $R_6$ is a Hamiltonian function in $\Tc^{s}(1)$ which satisfies 
	\begin{equation*}
	\left\|X_{R_6}(a,\overline{a},b,\overline b)\right\|_{s}\leq C\left\|(a,\overline{a},b,\overline b)\right\|^5_{s}\quad \text{for all}\quad 
	(a,\overline{a},b,\overline b)\in B_{s}(\varepsilon).	 
	\end{equation*}
	Furthermore $R_6$  commutes with $\Lbb$ and $\Mbb$.
	\item $\tau$ preserves the class $\Tc^{s}(\mu)$ for any $s>1/2$ and $\mu>0$ and it also conserves the mass and the momentum. Furthermore $\tau$  is close to the identity: there exists $C_{s}$ such that
	\begin{equation*}
	\left\|\tau(a,\overline{a},b,\overline b)-(a,\overline{a},b,\overline b)\right\|_{s}\leq C_{s}\left\|(a,\overline{a},b,\overline b)\right\|^3_{s} \quad 
	\text{for all} \quad (a,\overline{a},b,\overline b)\in B_{s}(\varepsilon).
	 \end{equation*}
\end{enumerate}
\end{proposition}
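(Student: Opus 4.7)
The plan is a one-step Lie-series Birkhoff normal form. First I split $P_4=Z_4+N_4$, where $Z_4$ collects exactly those monomials $a_kb_l\bar a_i\bar b_j$ that are resonant with $P_2$, i.e.\ satisfy both the momentum relation $k+l=i+j$ (already built into $P_4$) and the frequency relation $k^2+l^2=i^2+j^2$; together these force $\{k,l\}=\{i,j\}$, which is the explicit form of $Z_4$ given in (ii). By construction $\{P_2,Z_4\}=0$. The non-resonant remainder $N_4$ will be killed by the time-$1$ flow $\tau=\Phi^1_{\chi_4}$ of an auxiliary quartic generator $\chi_4$ solving the homological equation $\{P_2,\chi_4\}=-N_4$.

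Solving the homological equation is immediate coefficient by coefficient: since $\{P_2,a_kb_l\bar a_i\bar b_j\}=\pm i(k^2+l^2-i^2-j^2)\,a_kb_l\bar a_i\bar b_j$, the coefficient of $\chi_4$ on each non-resonant monomial has modulus at most $1$, because under the momentum constraint $k+l=i+j$ the denominator $k^2+l^2-i^2-j^2$ is a nonzero integer and hence bounded below by $1$ in absolute value. Thus $\chi_4$ is of the form \eqref{P} with uniformly bounded coefficients and zero momentum, so Lemma~\ref{XPanalytic} applies to give $\|X_{\chi_4}(\zeta)\|_s\le C\|\zeta\|_s^3$ on $B_s(1)$ and $\chi_4\in\Tc^s(1)$. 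Since every monomial of $P_4$ commutes with $\Lbb$ and $\Mbb$, so does every monomial of $N_4$, and therefore of $\chi_4$.

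A standard fixed-point argument based on the cubic bound for $X_{\chi_4}$ shows that for $\varepsilon$ small enough $\tau=\Phi^1_{\chi_4}$ is a well-defined symplectomorphism $B_s(\varepsilon)\to\U_\eps\subset B_s(2\varepsilon)$, close to identity with the cubic estimate in (iv), and preserves $\Lbb$ and $\Mbb$. The Taylor expansion along the flow gives
\begin{equation*}
H\circ\tau=P_2+P_4+\{P_2,\chi_4\}+\int_0^1(1-t)\{\{P_2,\chi_4\},\chi_4\}\circ\Phi^t_{\chi_4}\,dt+\int_0^1\{P_4,\chi_4\}\circ\Phi^t_{\chi_4}\,dt,
\end{equation*}
which upon substituting $\{P_2,\chi_4\}=-N_4$ and $P_4=Z_4+N_4$ collapses to $P_2+Z_4+R_6$, with $R_6$ given by the two integrals. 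Each integrand is a Poisson bracket of quartics with bounded coefficients and zero momentum, so iterating the $\ell^2_s$-convolution estimate \eqref{conv} in the spirit of Lemma~\ref{XPanalytic} yields the fifth-order vectorfield bound of (iii); preservation of $\Tc^s(1)$ and commutation of $R_6$ with $\Lbb$ and $\Mbb$ follow because these properties are stable under Poisson brackets with $\chi_4$ and under composition with $\Phi^t_{\chi_4}$.

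The main delicate point is that $P_2$ itself has unbounded coefficients $j^2$, so Lemma~\ref{XPanalytic} cannot be applied to $P_2$ nor to $\{P_2,\chi_4\}$ directly; the analytic estimates would fail if $R_6$ contained terms of this shape. The trick, standard in Birkhoff normal form, is that the homological equation prescribes $\{P_2,\chi_4\}=-N_4$ exactly, and $N_4$ has bounded coefficients by construction. Consequently every Poisson bracket remaining inside $R_6$ is between bounded quartics, and the required fifth-order bound reduces to finitely many applications of \eqref{conv} plus the Lipschitz bound on the flow $\Phi^t_{\chi_4}$.
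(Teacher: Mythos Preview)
Your proof is correct and follows essentially the same approach as the paper: a single Lie transform $\tau=\Phi^1_{\chi_4}$ with $\chi_4$ solving the homological equation $\{P_2,\chi_4\}=Z_4-P_4$, bounded coefficients on $\chi_4$ via Lemma~\ref{XPanalytic}, a bootstrap on the flow, and the observation that only brackets of bounded quartics survive in $R_6$. The only cosmetic difference is how the Taylor remainder is organized: the paper expands $H$ to second order as a whole, obtaining $R_6=\{P_4,\chi_4\}+\int_0^1(1-t)\{\{H,\chi_4\},\chi_4\}\circ\varphi_t\,dt$ and then rewrites $\{H,\chi_4\}=Z_4-P_4+\{P_4,\chi_4\}$, whereas you expand $P_2$ to second order and $P_4$ to first order separately; both lead to the same integrands built from $\{N_4,\chi_4\}$ and $\{P_4,\chi_4\}$.
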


\begin{proof}
The idea is to search $\tau$ as the time 1 flow $\varphi_1$ of $\chi_4$ where $\chi_4$ is a Hamiltonian polynomial of order 4.
We write
\begin{equation*}
 \chi_4(a,\overline{a},b,\overline b):=\sum\limits_{p,q,r,s\in\mathbb{Z}}m(p,q,r,s)a_{p}\overline{a}_{q}b_{r}\overline{b}_{s}.
\end{equation*}
For $F$ a Hamiltonian, the Taylor expansion of $F\circ\varphi_t$ between the times $t=0$ and $t=1$ gives:
\begin{equation*}
 F\circ\tau:=F\circ\varphi_1=F+\left\{F,\chi_4\right\}+\displaystyle\int_0^1(1-t)\left\{\left\{F,\chi_4\right\},\chi_4\right\}\circ\varphi_t\mathrm{d}t.
\end{equation*} 
Applying this formula to $F=H=P_2+P_4$ we obtain
\begin{equation}\label{defZ4R6}
H\circ\tau=\underbrace{P_2}_{P_2}+\underbrace{P_4+\left\{P_2,\chi_4\right\}}_{Z_4}+
\underbrace{\left\{P_4,\chi_4\right\}+\displaystyle\int_0^1(1-t)\left\{\left\{H,\chi_4\right\},\chi_4\right\}\circ\varphi_t\mathrm{d}t}_{R_6}.
\end{equation}
Thus defined, the first point of the proposition is already checked.
We have now to choose the polynomial $\chi_4$ such as $\tau$, $Z_4$ and $R_6$ satisfy the hypothesis of Proposition \ref{BNF}.
According to the equation \eqref{defZ4R6}, we want to solve the so called homological equation 
\begin{equation}\label{eqhomol}
 \left\{\chi_4,P_2\right\}=P_4-Z_4,
\end{equation}
where $P_2$, $P_4$ are given by equation \eqref{defNP} and $Z_4$ is given by Proposition \ref{BNF}.
For the right hand-side polynomial term, we have
\begin{equation*}
 (P_4-Z_4)(a,\overline{a},b,\overline b)=\sum\limits_{\substack{p-q+r-s=0 \\ p^2-q^2+r^2-s^2 \neq0}}a_p\overline{a}_qb_r\overline{b}_s.
\end{equation*}
The left hand-side Poisson bracket term gives
\begin{align*}
\left\{\chi_4,P_2\right\}&=-i\displaystyle\sum\limits_{j\in\mathbb{Z}}\left(\dfrac{\partial \chi_4}{\partial a_j}\dfrac{\partial P_2}{\partial \overline{a}_j}
-\dfrac{\partial \chi_4}{\partial \overline{a}_j}\dfrac{\partial P_2}{\partial a_j}+\dfrac{\partial \chi_4}{\partial b_j}\dfrac{\partial P_2}{\partial \overline b_j}
-\dfrac{\partial \chi_4}{\partial \overline b_j}\dfrac{\partial P_2}{\partial b_j}\right)\\
&= -i\sum\limits_{p,q,r,s\in\mathbb{Z}}(p^2-q^2+r^2-s^2)m(p,q,r,s)a_p\overline{a}_qb_r\overline{b}_s.
\end{align*}
Therefore, in order to solve the homological equation \eqref{eqhomol}, it suffices to choose
\begin{equation*}
m(p,q,r,s)=
\begin{dcases}
 \dfrac{i}{(p^2-q^2+r^2-s^2)} \quad \text{if} 
\begin{dcases}
p-q+r-s=0, \\
p^2-q^2+r^2-s^2 \neq0,
\end{dcases}
\\
0  \quad \text{else.}
\end{dcases}
\end{equation*}
Thanks to the choice of $\chi_4$, we have constructed the polynomial $Z_4$. 
It remains to prove that $Z_4$ is resonant, i.e 
\begin{equation*}
 \left\{P_2,Z_4\right\}=0.
\end{equation*}
With the same computations as in the $\chi_4$ construction, we have:
\begin{equation*}
 \left\{P_2,Z_4\right\}=i\sum\limits_{p,q,r,s\in\mathbb{Z}}(p^2-q^2+r^2-s^2)z(p,q,r,s)a_p\overline{a}_qb_r\overline{b}_s,
\end{equation*}
where 
\begin{equation*}
z(p,q,r,s)=
\left\lbrace
\begin{array}{ccc}
1  & \mbox{if} & p-q+r-s=0 \quad \text{and} \quad p^2-q^2+r^2-s^2=0,\\
0 & \mbox{else.}&
\end{array}\right.
\end{equation*}
Therefore,
\begin{equation*}
 \forall (p,q,r,s)\in\mathbb{Z}^4, \; (p^2-q^2+r^2-s^2)z(p,q,r,s)=0,
\end{equation*}
and we have $\left\{P_2,Z_4\right\}=0.$ The proof of the second point of the proposition is completed.
By construction, the coefficients of $\chi_4$ are bounded. Thus, by Lemma \ref{XPanalytic}, we have
\begin{equation}\label{estXchi}
\left\|X_{\chi_4}(a,\overline{a},b,\overline b)\right\|_{s}=\mathcal{O}\left\|(a,\overline{a},b,\overline b)\right\|_{s}^3.
\end{equation}
We now want to prove that $\tau$ is well defined, i.e. we want to prove that the flow $\varphi_t$ is defined at least up to $t=1$.
For that purpose, the idea here is to use a bootstrap argument.
We introduce $T=T(a,\overline{a},b,\overline b)>0$ the existence time of the flow $\varphi_t(a,\overline{a},b,\overline b)$, and we consider a smaller time $0\leq s\leq T$.
Writing the fundamental theorem of calculus for $\varphi_t$, the flow of $\chi_4$, between the times $t=0$ and $t=s$, we obtain
\begin{align*}
 \varphi_s(a,\overline{a},b,\overline b)-\varphi_0(a,\overline{a},b,\overline b)=\displaystyle\int_0^s\dot{\varphi_t}(a,\overline{a},b,\overline b)\mathrm{d}t
 =\displaystyle\int_0^sX_{\chi}\left(\varphi_t(a,\overline{a},b,\overline b)\right)\mathrm{d}t.
\end{align*}
By definition, we have $\varphi_0=Id$. Thus, the equation \eqref{estXchi} implies (with $C>0$ a constant):
\begin{equation}\label{phisprocheidentite}
 \|\varphi_s(a,\overline{a},b,\overline b)-(a,\overline{a},b,\overline b)\|_s\leq C\int_0^s\left\|\varphi_t(a,\overline{a},b,\overline b)\right\|_{s}^3\mathrm{d}t.
\end{equation}
Let us choose $(a,\overline{a},b,\overline b)\in B_{\rho}(\varepsilon)$.
As long as 
\begin{equation*}
\|\varphi_s(a,\overline{a},b,\overline b)-(a,\overline{a},b,\overline b)\|_s\leq2\|(a,\overline{a},b,\overline b)\|_s, 
\end{equation*}
we have (using that $(a,\overline{a},b,\overline b)\in B_{\rho}(\varepsilon)$),
\begin{align*}
 \|\varphi_s(a,\overline{a},b,\overline b)-(a,\overline{a},b,\overline b)\|_s&\leq Cs(2\|(a,\overline{a},b,\overline b)\|_s)^3\\
 &\leq \left(8C\varepsilon^2s\right)\|(a,\overline{a},b,\overline b)\|_s
\end{align*}
For $\varepsilon$ small enough, we have $8C\varepsilon^2\leq1$. Thus we obtain
\begin{equation*}
  \|\varphi_s(a,\overline{a},b,\overline b)\|_s\leq (1+s)\|(a,\overline{a},b,\overline b)\|_s.
\end{equation*}
This bound is satisfied as soon as 
\begin{equation*}
 \|\varphi_s(a,\overline{a},b,\overline b)-(a,\overline{a},b,\overline b)\|_s\leq2\|(a,\overline{a},b,\overline b)\|_s \quad \text{and} \quad s\leq T(a,\overline{a},b,\overline b).
\end{equation*}
Therefore, by continuity, we have $T(a,\overline{a},b,\overline b)\geq1$ and
\begin{equation}\label{bootstrap}
\forall s\in[0,1], \quad \|\varphi_s(a,\overline{a},b,\overline b)\|_s\leq 2\|(a,\overline{a},b,\overline b)\|_s.
\end{equation}
The fact that $T(a,\overline{a},b,\overline b)\geq1$ implies that $\tau=\varphi_1$ is well defined. 
Thus, writing the equation~\eqref{phisprocheidentite} for $s=1$ and using the bound
\eqref{bootstrap}, we obtain
\begin{equation*}
  \|\tau(a,\overline{a},b,\overline b)-(a,\overline{a},b,\overline b)\|_s \lesssim \left\|(a,\overline{a},b,\overline b)\right\|_{s}^3.
\end{equation*}
Moreover  $\tau$ preserves the class $\Tc_{\rm res}^{s}(\mu)$ for any $\mu\leq 1$ as a consequence of the formula  $\nabla_z(f\circ\tau)(z)=(D\tau(z))^*\nabla f\circ\tau(z)$ and the fact that $D\tau(z)$ maps $Z_s$ into $Z_s$.
On the other hand $\{\chi_4,\Mbb\}=\{\chi_4,\Lbb\}=0$, therefore the flow $\tau=\varphi_1$ of $\chi_4$ conserves the mass and the momentum: 
$\Mbb(\tau(a,\bar a,b,\bar b))=\Mbb(a,\bar a,b,\bar b)$ and $\Lbb(\tau(a,\bar a,b,\bar b))=\Lbb(a,\bar a,b,\bar b)$.
We have thus proved the forth point of the proposition.

We recall that by construction, the remainder term $R_6$ is
\begin{equation*}
 R_6:=\left\{P_4,\chi_4\right\}+\displaystyle\int_0^1(1-t)\left\{\left\{H,\chi_4\right\},\chi_4\right\}\circ\varphi_t\mathrm{d}t.
\end{equation*}
The polynomials $P_4$, $\chi_4$ and $Z_4$ have bounded coefficients and have the prescribed form \eqref{P}. Therefore, using the homological equation~\eqref{eqhomol}, 
the same is true for the polynomials $Q_1$ and $Q_2$ defined by
\begin{equation*}
\begin{dcases}
 Q_1:=\left\{P_4,\chi_4\right\}, \\ 
 Q_2:=\left\{\left\{H,\chi_4\right\},\chi_4\right\}=\left\{Z_4,\chi_4\right\}-\left\{P_4,\chi_4\right\}+\left\{\left\{P_4,\chi_4\right\},\chi_4\right\}.
 \end{dcases}
\end{equation*}
Thus using Lemma \ref{XPanalytic}  we conclude that $R_6\in\Tc_{\rm res}^{s}(1)$.\\
Moreover, the polynomials $P_4$, $\chi_4$ and $Z_4$ are of order 4. Thus, the polynomials $Q_1$ and $Q_2$ are of order at least 6.
Therefore, the computations of Lemma \ref{XPanalytic} give
\begin{equation*}
\left\|X_{Q_1}(a,\overline{a},b,\overline b)\right\|_{s}\lesssim\left\|(a,\overline{a},b,\overline b)\right\|^5_{s} \; \text{and} 
\; \left\|X_{Q_2}(a,\overline{a},b,\overline b)\right\|_{s}\lesssim\left\|(a,\overline{a},b,\overline b)\right\|^5_{s}.
\end{equation*}
Finally, for $\varepsilon$ small enough and $(a,\overline{a},b,\overline b)\in B_{s}(\varepsilon)$, the estimate \eqref{bootstrap} implies
\begin{equation*}
\left\|X_{R_6}(a,\overline{a},b,\overline b)\right\|_{s}\lesssim\left\|(a,\overline{a},b,\overline b)\right\|^5_{s}.
\end{equation*}
On the other hand since $\tau$ conserves the mass and the momentum, $R_6=H\circ \tau-P_2-Z_4$ commutes with $\Lbb$ and $\Mbb$.
The proof of the point ($iii$) and thus the proof of the proposition is completed.
\end{proof}

\section{Two applications of KAM for the coupled NLS system}\label{secappli}

We aim to use  Theorem \ref{KAM} in order to study the extended coupled Schrödinger systems:
\begin{equation}\begin{dcases}
i\partial_tu+\partial_{xx}u=\left|v\right|^2u+\frac{\partial g}{\partial \overline{u}}(u,\overline{u},v,\overline{v}),& \quad (t,x)\in \mathbb{R}\times\mathbb{T}, \\ 
i\partial_tv+\partial_{xx}v=\left|u\right|^2v+\frac{\partial g}{\partial \overline{v}}(u,\overline{u},v,\overline{v}),&
\end{dcases}\label{sysgen}\end{equation}
where $g$  is a real\footnote{Here real means that $g(z_1,\overline{z}_1,z_2,\overline{z}_2)\in\R$.} analytic function on a neighborhood of the origin in $\C^4$ and $g$ is of order at least 5 in $(u,\overline{u},v,\overline{v})$. We set
$$R_5(a,\bar a,b,\bar b)=\int_\T g(\sum_\Z a_je^{ijx},\sum_\Z \bar a_je^{-ijx},\sum_\Z b_je^{ijx},\sum_\Z \bar b_je^{-ijx})dx$$
and we note that, in view of example \ref{exf} , $R_5\in\Tc^{s}({\mu})$ for some $\mu>0$. We will assume that 
\be\label{g}g(u,\overline{u},v,\overline{v})=f(|u|^2,|v|^2,u\overline v,v\overline u)\ee
in such a way  that $R_5$ commutes with $\Lbb$ and $\Mbb$. \\
With the notations introduced in \eqref{defNP}, the Hamiltonian of the system is thus given by
\begin{equation}\label{defH}
 H=P_2+P_4+R_5.
\end{equation}
After the application of the Birkhoff normal form of Proposition \ref{BNF}, we have
\begin{equation}\label{defHB}
 H^B=H\circ\tau=P_2+Z_4+R_5\circ\tau+R_6,
\end{equation}
where we recall that
\begin{equation*}
 Z_4(a,\overline{a},b,\overline b)=\sum\limits_{\left\{k,l\right\}=\left\{i,j\right\}}a_kb_l\overline{a}_i\overline b_j.
\end{equation*}
In order to apply the KAM result, the goal is to study the resonant term $Z_4$ to determine which part of $Z_4$ is an effective part, and which part can be treated as a remainder term.
Due to the assumptions of Theorem \ref{KAM}, the idea is to put the jetless part of $Z_4$ in the remainder term and to consider the jet part as the effective part of the Hamiltonian.

The idea is to consider the following solution of the linear system:
\begin{equation}\label{sollin}
 u(x,t)=a_pe^{ipx}e^{-ip^2t}, \quad v(x,t)=b_{q}e^{iqx}e^{-iq^2t}.
\end{equation}
We introduce the constants $\rho_i$, the variables $x_i$ and $\theta_i$,  the actions $I_i$ ($i=1,2$) and the variables $\zeta_\a,\ \a\in\Zc$ defined by
\begin{equation}\begin{dcases}\label{defIthetazeta}
a_{p}(t)=(\nu\rho_1+x_1(t))^\frac12e^{i\theta_1(t)}=:\sqrt{I_1}e^{i\theta_1(t)}, \\
b_{q}(t)=(\nu\rho_2+x_2(t))^\frac12e^{i\theta_2(t)}=:\sqrt{I_2}e^{i\theta_1(t)},\\
a_k(t)=\zeta_{k+}(t),\quad k\neq p,\\
b_k(t)=\zeta_{k-}(t), \quad k\neq q,
\end{dcases}
\end{equation}
where $(\rho_1,\rho_2)\in[1,2]^2$ and $\nu$ is a small parameter which controls the size of the solution. 
The canonical symplectic structure $-i(\mathrm{d}u\wedge\mathrm{d}\overline{u}+\mathrm{d}v\wedge\mathrm{d}\overline{v})$ becomes
\begin{equation*}
 -\mathrm{d}I\wedge\mathrm{d}\theta-i\mathrm{d}\zeta\wedge\mathrm{d}\overline{\zeta}= -\mathrm{d}x\wedge\mathrm{d}\theta-i\mathrm{d}\zeta\wedge\mathrm{d}\overline{\zeta}.
\end{equation*}
We note that in view of example \ref{exf} and Proposition \ref{BNF},  $R_5\circ\tau+R_6$ belongs to $\Tc^{s}(\s,{\mu})$ for some $\s>0$ and $\mu>0$. Further $R_5\circ\tau+R_6$ commutes with $\Lbb$ and $\Mbb$.

We want to study the linear stability of the torus $\mathbf{T}^{lin}_\rho$ defined by the solution~\eqref{sollin} of the linear system with $x=0$, $\theta$ real and $z=0$ 
(where we recall that $z=(\zeta,\bar\zeta)$).
This torus can be written as
\begin{equation*}
 \mathbf{T}^{lin}_\rho:=\left\{(I,\theta,z)\,|\, I=\nu\rho=(\nu\rho_1,\nu\rho_2),\theta\in\R^2/2\pi\Z^2,z=0\right\}.
\end{equation*}
For that purpose, we introduce a toroidal neighborhood of the torus $\mathbf{T}^{lin}_\r$ by
\begin{equation}\label{deftaurho}
 \mathbf{T}_\rho(\nu,\sigma,\mu,s):=\left\{(I,\theta,z)\,|\, |I-\nu\rho|<\nu\mu^2,|\Im\theta|<\sigma,\|z\|_s<\nu^\frac12\mu\right\}.
\end{equation}
Thanks to a translation in actions between $x$ and $I=\nu\rho+x$, 
we make an analogy between the toroidal neighborhood $\mathbf{T}_\rho(\nu,\sigma,\mu,s)$ of the torus $\mathbf{T}^{lin}_\r$ 
and the neighborhood $\mathcal{O}^s(\sigma,\nu^\frac12\mu)$ of the origin. Indeed we have
\begin{equation*}
 \mathbf{T}_\rho(\nu,\sigma,\mu,s)\approx\mathcal{O}^s(\sigma,\nu^\frac12\mu)=\left\{(x,\theta,z)\,|\, |x|<\nu\mu^2,|\Im\theta|<\sigma,\|z\|_s<\nu^\frac12\mu\right\}.
\end{equation*}

As a remark, we see that $\mathbf{T}_\rho(\nu,\sigma,\mu,s)$ is a small neighborhood of $\mathbf{T}^{lin}_\rho$ in the following sense:
\begin{equation*}
 \lim_{\sigma,\mu\rightarrow0}\left(\mathbf{T}_\rho(\nu,\sigma,\mu,s)\right)=\mathbf{T}^{lin}_\rho.
\end{equation*}

Through the two following subsections, we study two examples of application of the KAM theorem. For that purpose, we highlight in Subsection~\ref{subsecunstable} some 
unstable linear tori (corresponding to the case $p\neq q$), whereas we highlight in Subsection \ref{subsecstable} some stable ones (corresponding to the case $p=q$).

\subsection{1st case: emphasizing an unstable linear torus}\label{subsecunstable}

We consider the case $p\neq q$ and thus the following solution of the linear system:
\begin{equation}\label{sollin1}
 u(x,t)=a_pe^{ipx}e^{-ip^2t}, \quad v(x,t)=b_{q}e^{iqx}e^{-iq^2t}, \quad \text{with} \quad p\neq q.
\end{equation}
The goal of this section is to prove the following:

\begin{theorem}\label{thmuns} 
There exist $\nu_0>0$, $\sigma_0>0$ and $\mu_0>0$ such that, for $s>\frac12$, $0<\nu\leq\nu_0$, $0<\sigma\leq\sigma_0$, $0<\mu\leq\mu_0$  and $ \rho\in\D$

\begin{enumerate}[(i)]
 \item There exist 
 \begin{equation*}
  \Phi_\rho:\left(\begin{array}{lll}&\mathcal{O}^s(\frac12,\frac{e^{-\frac12}}{2})\rightarrow &\mathbf{T}_\rho(\nu,1,1,s)\\ 
 &(r,\theta,z)\mapsto &(I,\theta,z') \end{array}\right)
 \end{equation*}
real holomorphic transformations, analytically depending on $\rho$, which transform the symplectic structure 
$-\mathrm{d}r\wedge\mathrm{d}\theta-i\mathrm{d}\zeta\wedge\mathrm{d}\overline{\zeta}$ on $\mathcal{O}^s(\frac12,\frac{e^{-\frac12}}{2})$ to the symplectic structure
$-\nu\mathrm{d}I\wedge\mathrm{d}\theta-i\nu\mathrm{d}\zeta'\wedge\mathrm{d}\overline{\zeta'}$ on $\mathbf{T}_\rho(\nu,1,1,s)$.
The change of variables $\Phi_\rho$ is close to the scaling by the factor $\nu^\frac12$ on the $\L$-modes but not on the $\F$-modes, where it is close to a certain affine 
transformation depending on $\theta$.

 \item $\Phi_\rho$ puts the Hamiltonian $H=P_2+P_4+R_5$ in  normal form in the following sense:
\begin{equation}\label{H1}
\tilde H= \frac1\nu\left(H\circ\Phi_\rho-C\right)(r,\theta,z)=h_0(r,z)+f(r,\theta,z),
\end{equation}
where $C=\nu^2\rho_1\rho_2+\nu p^2\rho_1+\nu q^2\rho_2$ is a constant and the effective part $h_0$ of the Hamiltonian reads 
\begin{equation}\label{h01}
 h_0=\Omega(\rho)\cdot r+ \sum_{j\neq p,q}\left( \Lambda^a_j(\rho)|a_j|^2+\Lambda^b_j(\rho)|b_j|^2 \right)
 + \frac12\langle z_f,K(\rho)z_f \rangle.
\end{equation}
The frequencies $\Omega_\rho$ are given by
\begin{equation*}
 \Omega(\rho)=
\begin{pmatrix}
p^2+\nu\rho_2 \\ q^2+\nu\rho_1 
\end{pmatrix},
\end{equation*}
the eigenvalues $\Lambda^a_j$ and $\Lambda^b_j$ are defined by
\begin{equation*}
\Lambda^a_j(\rho)=j^2+\nu\rho_2 \quad \text{and} \quad \Lambda^b_j(\rho)=j^2+\nu\rho_1,
\end{equation*}
and the symmetric real matrix $K(\rho)$, acting on the four external modes 
$
 z_f:=\Phi_\rho\begin{pmatrix}
 b_{p}  \\ a_{q} \\ \overline b_p \\ \overline a_q
 \end{pmatrix},
$
is given by
\begin{equation*}
K(\rho)=
\nu\begin{pmatrix}
0  & \sqrt{\rho_1\rho_2} & \r_2-\r_1 & 0 \\ 
\sqrt{\rho_1\rho_2} & 0 & 0 & \r_1-\r_2  \\ 
\r_2-\r_1 & 0 &  0 & \sqrt{\rho_1\rho_2} \\ 
0 & \r_1-\r_2 &  \sqrt{\rho_1\rho_2} & 0 
\end{pmatrix}.
\end{equation*}

 \item The remainder term $f$ belongs to $\Tc_{\rm res}^{s}(\sigma,\mu,\D)$ and satisfies
 \begin{equation*}
  [f]^{s}_{\sigma,\mu,\D}\lesssim \nu \quad \text{and} \quad [f^T]^{s}_{\sigma,\mu,\D}\lesssim \nu^{\frac32}.
 \end{equation*}
\end{enumerate}
\end{theorem}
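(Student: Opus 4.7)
The plan is to start from the Birkhoff normal form of Proposition \ref{BNF}, which yields $H\circ\tau=P_2+Z_4+R_5\circ\tau+R_6$, and to pass to symplectic action-angle coordinates on the two internal modes via \eqref{defIthetazeta}, keeping every other mode as an external variable $\zeta_\a$. Writing
\[Z_4=\sum_{k,l\in\Z}|a_k|^2|b_l|^2+\sum_{k\neq l}a_k b_l\bar a_l\bar b_k,\]
I compute the jet of $P_2+Z_4$ at $x=0$, $z=0$ in the sense of \eqref{jet}. The constant piece gives exactly $C=\nu^2\rho_1\rho_2+\nu p^2\rho_1+\nu q^2\rho_2$; the linear-in-$x$ piece produces $(p^2+\nu\rho_2)x_1+(q^2+\nu\rho_1)x_2$, yielding $\Omega(\rho)$ after rescaling; and the quadratic-in-$z$ piece decomposes into a diagonal part giving $\Lambda^a_j,\Lambda^b_j$ on the elliptic modes $j\neq p,q$ and diagonal contributions on the hyperbolic modes $b_p,a_q$, together with a $\theta$-dependent off-diagonal coupling $\nu\sqrt{\rho_1\rho_2}\bigl(e^{i(\theta_1+\theta_2)}\bar\zeta_{q+}\bar\zeta_{p-}+\text{c.c.}\bigr)$ arising exclusively from the two monomials $a_p b_q\bar a_q\bar b_p$ and $a_q b_p\bar a_p\bar b_q$. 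A direct inspection shows that no linear-in-$z$ monomial appears in this jet.

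The main technical step is then an affine-in-$\theta$ symplectic transformation on the hyperbolic block that removes the phases and produces the announced matrix $K(\rho)$. It amounts to a rotation of the hyperbolic modes $\zeta_{p-},\zeta_{q+}$ by phases linear in $\theta_1,\theta_2$, compensated by the action shifts $x_j\mapsto x_j\pm|\cdot|^2$ required to preserve the $2$-form $-\dd x\wedge\dd\theta-i\dd\zeta\wedge\dd\bar\zeta$. The phases cancel the $e^{\pm i(\theta_1+\theta_2)}$ factors and turn the off-diagonal block into the constant $\nu\sqrt{\rho_1\rho_2}$ entries of $K$; the action shifts propagate the $\Omega\cdot x$ term into quadratic-in-$z_f$ corrections which exactly absorb the $p^2,q^2$ contributions of $P_2$ and leave the $\pm\nu(\rho_2-\rho_1)$ diagonal entries of $K$. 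I then rescale $x=\nu r$, $z_{\rm old}=\nu^{1/2}z_{\rm new}$, subtract $C$, and divide by $\nu$; this maps $\mathbf{T}_\rho(\nu,1,1,s)$ onto $\O^s(1/2,\frac{e^{-1/2}}{2})$ with the $\nu$ factor appearing on the symplectic form, and produces $\tilde H=h_0+f$ with $h_0$ in the form \eqref{h01}.

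Finally I estimate $f$. The remainder gathers the non-jet pieces of $Z_4$ (cubic and quartic in $z$, and terms mixing $x$ with $z$), the Birkhoff tail $R_5\circ\tau+R_6$, and the sub-leading contributions from the hyperbolic change of variable. After rescaling, the quartic-in-$z$ pieces of $Z_4$ become of size $\nu$, the $R_5\circ\tau$ contribution of order $\nu^{3/2}$, and $R_6$ even smaller, giving $[f]^s_{\sigma,\mu,\D}\lesssim\nu$. The jet $f^T$ receives no contribution from the cubic or quartic non-jet parts of $Z_4$: only the jet of $R_5\circ\tau$ (of size $\nu^{3/2}$ by Example~\ref{exf} together with the $\|z\|^4$ bound on $X_{R_5}$) and the sub-leading corrections of the hyperbolic change of variable (each carrying an extra $\nu^{1/2}$) contribute, yielding $[f^T]^s_{\sigma,\mu,\D}\lesssim\nu^{3/2}$. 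The $\Tc_{\rm res}^s$ property is inherited from the momentum conservation of $P_2+Z_4+R_5\circ\tau+R_6$ (preserved through every step used): any monomial $e^{ik\cdot\theta}\zeta_\a\bar\zeta_\b$ in the jet with $[\a]=[\b]$ must satisfy $k_1p+k_2q+w_\a-w_\b=0$, forcing $|w_\a-w_\b|\leq(|p|+|q|)|k|$ whenever $\a\neq\b$, which is exactly \eqref{mom} with $M=|p|+|q|$. The main technical obstacle I anticipate is the hyperbolic change of variable itself: it is not a mere rescaling and genuinely mixes the action $r$ with the hyperbolic modes $z_f$, so the bookkeeping needed to recover the precise structure of $K(\rho)$ and to keep the jet of the remainder of size $\nu^{3/2}$ is the delicate part.
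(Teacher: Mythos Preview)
Your overall strategy is correct and matches the paper's proof almost step for step: Birkhoff normal form, action--angle variables on the two internal modes, identification of the jet of $P_2+Z_4$, the symplectic phase shift $\zeta_{q+}\mapsto e^{-i\theta_2}\zeta_{q+}$, $\zeta_{p-}\mapsto e^{-i\theta_1}\zeta_{p-}$ compensated by $y_1=x_1+|\zeta_{p-}|^2$, $y_2=x_2+|\zeta_{q+}|^2$ (this is exactly the paper's $\Psi_{ang}$), and finally the rescaling $\chi_\rho$. Your computation of how the action shifts cancel the $p^2,q^2$ pieces and leave the $\pm\nu(\rho_2-\rho_1)$ diagonal entries of $K$ is right.

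There is, however, a genuine gap in your verification that $f\in\Tc_{\rm res}^s$. The momentum constraint on a monomial $e^{ik\cdot\theta}\zeta_\a\bar\zeta_\b$ with $\a=(j,\pm_1)$, $\b=(\ell,\pm_2)$ reads $pk_1+qk_2+j-\ell=0$, not $pk_1+qk_2+w_\a-w_\b=0$. Since $[\a]=[\b]$ forces $w_\a=|j|=|\ell|=w_\b$, your displayed inequality $|w_\a-w_\b|\le(|p|+|q|)|k|$ is just $0\le(|p|+|q|)|k|$ and proves nothing. In the subcase $\ell=j$ with $\a\neq\b$ (same index, opposite sign), momentum alone gives only $pk_1+qk_2=0$, which does not bound $|j|$; you must also use mass conservation, which in these coordinates reads $k_1+k_2=0$, whence $(p-q)k_1=0$ and $k=0$ since $p\neq q$. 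In the subcase $\ell=-j$, momentum gives $pk_1+qk_2=-2j$ and hence $w_\a=|j|\le\tfrac{|p|+|q|}{2}|k|$. This is precisely the content of the paper's Lemma~\ref{MoM}, which invokes \emph{both} $\Lbb$ and $\Mbb$.

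A minor clarification on $f^T$: the ``sub-leading corrections of the hyperbolic change of variable'' that you worry about actually contribute nothing to the jet. After $\Psi_{ang}$ every explicit remainder term coming from $Z_{4}$ --- in particular $\bigl(\sqrt{(\nu\rho_1+y_1-|d_p|^2)(\nu\rho_2+y_2-|c_q|^2)}-\nu\sqrt{\rho_1\rho_2}\bigr)(\bar c_q\bar d_p+c_qd_p)$ and the products $(y_i-|\cdot|^2)\sum|\cdot|^2$ --- has zero jet at $y=0$, $z=0$, because the prefactors vanish there and the remaining factors are already quadratic in $z$. Hence $f^T$ receives contributions only from $R_5\circ\tau$ and $R_6$, which are of order $\nu^{3/2}$ and $\nu^2$ respectively after rescaling. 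This makes the jet estimate cleaner than you indicate.
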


%
%

\subsubsection{Set up of the change of variables \texorpdfstring{$\Phi_\rho$}{phirho}}
The construction of the change of variables $\Phi_\rho$ is decomposed in two steps. First we eliminate the angles and then we rescale the variables.

\subsubsubsection{Structure of the Hamiltonian and elimination of the angles}

We study the Hamiltonian $H^B$, defined in equation \eqref{defHB}, obtained after the Birkhoff normal form iteration:
\begin{equation*}
 H^B=H\circ\tau=P_2+Z_4+R_5\circ\tau+R_6.
\end{equation*}
Here, the term $P_2$ already contributes to the desired effective Hamiltonian $h_0$ and the constant term $C$ of Theorem \ref{thmuns}, 
whereas the terms $R_5\circ\tau$ and $R_6$ contribute to the remainder term $f$. 
Therefore, we just have to deal with the resonant term $Z_4$, separating the constant, the effective and the remainder parts. 
We split the polynomial $Z_4$ according to the number of inner modes $a_p$ and $b_q$.
The term of order 4 in $a_p$, $b_q$ from $Z_4$ is given by 
\begin{align*}
 Z_{4,4}:=|a_p|^2|b_q|^2&=(\nu\rho_1+x_1)(\nu\rho_2+x_2)\\
 &=\underbrace{\nu^2\rho_1\rho_2}_{\text{constant}}+\underbrace{\nu\rho_2x_1+\nu\rho_1x_2}_{\text{effective part}}+\underbrace{x_1x_2}_{\text{remainder}}.
\end{align*}
Due to the structure of the resonant set, there is no term of order 3 in $a_p$, $b_q$ in $Z_4$.
The terms of order 2 in $a_p$, $b_q$ from $Z_4$ are 
\begin{align*}
 Z_{4,2}:=|a_p|^2\sum_{k\neq q}|b_k|^2+|b_q|^2\sum_{k\neq p}|a_k|^2+
 a_pb_q\overline a_{q}\overline b_p+\overline a_p\overline b_qa_qb_p.
 \end{align*}
For the first term (the same goes for the second one), we write 
\begin{equation*}
 |a_p|^2\sum_{k\neq q}|b_k|^2=\underbrace{\nu\rho_1\sum_{k\neq q}|b_k|^2}_{\text{effective part}}
 +\underbrace{x_1\sum_{k\neq q}|b_k|^2}_{\text{remainder}}.
\end{equation*}
The study of the two last terms is trickier because in each term there is four different modes and thus there are angles.
In order to split these terms between effective and remainder terms, we recall that we want in Theorem~\ref{thmuns} the remainder term $f$ to be small and to have a jet 
smaller. 
Therefore, the idea is to keep the jet part of these terms in the effective part and to put the other part (without jet) in the remainder part. We obtain for example for the 
term $a_pb_q\overline{a}_q\overline b_p$:
\begin{align*}
 \underbrace{\nu\sqrt{\rho_1\rho_2}e^{i(\theta_1+\theta_2)}\overline a_q\overline b_p}_{\text{effective part}}
 +\underbrace{\left(\sqrt{(\nu\rho_1+x_1)(\nu\rho_2+x_2)}-\nu\sqrt{\rho_1\rho_2}\right)e^{i(\theta_1+\theta_2)}\overline a_q\overline b_p}_{\text{remainder (jetless part)}}.
\end{align*}
The effective part of $Z_{4,2}$ is thus given by
\begin{align*}
 Z_{4,2}^e:=\nu\big(\rho_1\sum_{k\neq q}|b_k|^2+\rho_2\sum_{k\neq p}|a_k|^2
 +\sqrt{\rho_1\rho_2}(e^{i(\theta_1+\theta_2)}\overline a_q\overline b_p+e^{-i(\theta_1+\theta_2)}a_qb_p)\big).
\end{align*}
Then, in order to kill the angles, we introduce the symplectic change of variables 
\begin{equation*}
\Psi_{ang}\left(x,\theta,z=(a,b)\right)=\left(y,\theta,z'=(c,d)\right),  
\end{equation*}
where the new variables $c,d$ and $y$ are defined by
\begin{equation}
\begin{dcases}
  c_q=a_qe^{-i\theta_2}, \quad \quad c_k=a_k, \; k\neq p,q, \\
 d_p=b_pe^{-i\theta_1}, \quad \quad d_k=b_k, \; k\neq p,q, \\
 y_1=x_1+|b_p|^2, \quad \quad y_2=x_2+|a_q|^2.
\end{dcases} \label{defpsiang}
\end{equation}
\begin{remark}
Due to the symmetry of the term $e^{i(\theta_1+\theta_2)}\overline a_q\overline b_p+e^{-i(\theta_1+\theta_2)}a_qb_p$, we could have interchange $\theta_1$ and $\theta_2$ in the 
definition of the change of variables $\Psi_{ang}$ (and change the definition of $y_1$ and $y_2$ to keep the symplectism of $\Psi_{ang}$). 
Nevertheless, the choice made here has two main advantages:
\begin{enumerate}
 \item It allows to generalize the result for any finite number of (all different) modes excited for $u$ and $v$.  
 \item It makes the computations of the mass and momentum easier.
\end{enumerate}

\end{remark}

This change of variables $\Psi_{ang}$ is the reason why the change of variable $\Phi_\rho$ of Theorem \ref{thmuns} is not close to a scaling for the $\F$-modes $a_q$ and $b_p$.
Finally, the terms $Z_{4,1}$ and $Z_{4,0}$ of order 1 and 0 in $a_p$, $b_q$ from $Z_4$ are remainder terms.
We can thus write the Hamiltonian $\tilde{H}=H^B\circ\Psi_{ang}$ as
\begin{equation}\label{defhtilde}
 \tilde{H}=H^B\circ\Psi_{ang}=C+H^e+R,
\end{equation}
where the constant part $C$ is given by 
\begin{equation*}
C=\nu^2\rho_1\rho_2+\nu p^2\rho_1+\nu q^2\rho_2,
\end{equation*}
the remainder term $R$ is defined by
\begin{align*}
R=&Z_{4,0}\circ\Psi_{ang}+Z_{4,1}\circ\Psi_{ang}+(y_1-|d_p|^2)\sum_{k\neq q}|d_k|^2+(y_2-|c_q|^2)\sum_{k\neq p}|c_k|^2\\
&+\left(\sqrt{(\nu\rho_1+y_1-|d_p|^2)(\nu\rho_2+y_2-|c_q|^2)}-\nu\sqrt{\rho_1\rho_2}\right)(\overline{c}_q\overline{d}_{p}+c_qd_p)\\
&+(y_1-|d_p|^2)(y_2-|c_q|^2)+R_5\circ\tau\circ\Psi_{ang}+R_6\circ\Psi_{ang},
\end{align*}
and the effective Hamiltonian $H^e$ reads 
\begin{align*}
 H^e &=(p^2+\nu\rho_2)y_1+(q^2+\nu\rho_1)y_2+\nu(\r_1-\r_2)|d_p|^2+\nu(\r_2-\r_1)|c_q|^2\\
 &\quad + \nu\sqrt{\rho_1\rho_2}(\overline c_q\overline d_p+c_qd_p) + \sum_{k\neq p,q}(k^2+\nu\rho_2)|c_k|^2+(k^2+\nu\rho_1)|d_k|^2.
\end{align*}
The new frequencies are thus given by
\begin{equation*}
 \Omega(\rho)=\left( \begin{array}{c}
p^2+\nu\rho_2 \\
q^2+\nu\rho_1
\end{array} \right).
\end{equation*}
This expression of the Hamiltonian $\tilde{H}$ is really close to the desired one of Theorem~\ref{thmuns}. 
In order to control the size of the remainder term, the idea is now to rescale the variables $c$, $d$ and $y$.

\subsubsubsection{Rescaling of the variables and introduction of \texorpdfstring{$\Phi_\rho$}{phirho}}

In order to study the initial Hamiltonian $H$ on the torus $\mathbf{T}_\rho(\nu,\frac12,\frac{e^{-\frac12}}{2},s)$, we introduce the rescaling of the variables $y$ and $z$ 
by the change of variables 
\begin{equation*}
 \chi_\rho\left(y,\theta,z'\right)=(r,\theta,z),
\end{equation*}
where
\begin{equation*}
(r,\theta,z):=(\nu y,\theta,\nu^\frac12z').
\end{equation*}
The symplectic structure becomes 
\begin{equation*}
 -\nu\mathrm{d}r\wedge\mathrm{d}\theta-i\nu\mathrm{d}\zeta\wedge\mathrm{d}\overline{\zeta}.
\end{equation*}
By definition, the change of variables $\chi_\rho$ sends the neighborhood $\mathcal{O}^s(\sigma,\mu)$ of the origin into a toroidal neighborhood of $\mathbf{T}^{lin}_\rho$:
\begin{equation*}
\chi_\rho\left(\mathcal{O}^s(\sigma,\mu)\right)=\mathbf{T}_\rho(\nu,\sigma,\mu,s), \quad \forall \sigma,\mu,\nu,s>0.
\end{equation*}
We can now define the change of variable $\Phi_\rho$ of Theorem \ref{thmuns} by:
\begin{equation}\label{defphirho}
 \Phi_\rho:=\tau\circ\Psi_{ang}\circ\chi_\rho.
\end{equation}
Thanks to this definition, we are now able to prove Theorem \ref{thmuns}.

\subsubsubsection{Remainders are in the good class }

We use the clustering defined in Example \ref{ex22} and consider the class $\Tc^{s}(\s,{\mu})$ related to the new variables $(r,\theta,z)$ induced by $ \Phi_\rho$. The following lemma will insure that all the remainders are in the restricted class defined by \eqref{mom}.
\begin{lemma}\label{MoM} Let $f\in\Tc^{s}(\s,{\mu})$. If $f$ commutes with $\Mbb$ and $\Lbb$ then $f\in\Tc_{\rm res}^{s}(\s,{\mu})$.
\end{lemma}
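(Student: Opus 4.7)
The plan is to compute $\Lbb\circ\Phi_\rho$ and $\Mbb\circ\Phi_\rho$ in the new $(r,\theta,z)$ variables, observe that both are diagonal in the Fourier--Taylor basis, and then read off from the commutation identities the constraints forced on each nontrivial monomial of $f$.

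Since $\tau$ preserves both Hamiltonians (Proposition \ref{BNF}), only $\Psi_{ang}\circ\chi_\rho$ contributes. Substituting \eqref{defpsiang} together with $|a_p|^2=\nu\rho_1+y_1-|d_p|^2$, $|b_q|^2=\nu\rho_2+y_2-|c_q|^2$, and splitting
\begin{equation*}
 \sum_{k\neq p}|c_k|^2=|c_q|^2+\sum_{k\neq p,q}|c_k|^2,\qquad \sum_{k\neq q}|d_k|^2=|d_p|^2+\sum_{k\neq p,q}|d_k|^2,
\end{equation*}
produces an exact cancellation of the hyperbolic modes $c_q,d_p$ and gives, after rescaling by $\chi_\rho$,
\begin{align*}
 \Lbb\circ\Phi_\rho &= \Cte+\frac{r_1+r_2}{\nu}+\frac{1}{\nu}\sum_{\a\in\L}|\zeta_\a|^2,\\
 \Mbb\circ\Phi_\rho &= \Cte+\frac{pr_1+qr_2}{\nu}+\frac{1}{\nu}\sum_{\a\in\L}\mathrm{ind}(\a)|\zeta_\a|^2,
\end{align*}
with the convention $\mathrm{ind}(k_\pm):=k$ for $k\in\Z\setminus\{p,q\}$. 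This cancellation is the entire point of the apparently ad hoc terms $|b_p|^2$, $|a_q|^2$ appended to $y_1,y_2$ in \eqref{defpsiang}, and it is the only nontrivial step in the proof.

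Since $\Phi_\rho$ is symplectic, $\{f,\Lbb\circ\Phi_\rho\}=\{f,\Mbb\circ\Phi_\rho\}=0$. Expanding $f$ as a Fourier--Taylor series in $(r,\theta,z)$ and computing each bracket monomial by monomial (the two test Hamiltonians depend only on $r$ and on $|\zeta_\a|^2$, hence are diagonal in this basis), every nonvanishing monomial $e^{ik\cdot\theta}r^{\alpha}\prod_\a \zeta_\a^{n_\a}\bar\zeta_\a^{m_\a}$ of $f$ must satisfy
\begin{equation*}
 k_1+k_2=\sum_{\a\in\L}(n_\a-m_\a),\qquad pk_1+qk_2=\sum_{\a\in\L}\mathrm{ind}(\a)(n_\a-m_\a).
\end{equation*}

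I then apply these two constraints to a jet monomial of the critical type $e^{ik\cdot\theta}\zeta_\a\bar\zeta_\b$ with $\a,\b\in\L$, $[\a]=[\b]$, $\a\neq\b$. By Example \ref{ex22}, such a pair lies in a common cluster $\{j_+,j_-,-j_+,-j_-\}$ (or in one of the smaller clusters $[-p],[-q]$), so $w_\a=w_\b=|j|$ and $\mathrm{ind}(\a)=s_\a|j|$, $\mathrm{ind}(\b)=s_\b|j|$ with $s_\a,s_\b\in\{\pm 1\}$. The two constraints then reduce to $k_1+k_2=0$ and $(p-q)k_1=(s_\a-s_\b)|j|$. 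When $s_\a=s_\b$ (which is automatic for the smaller clusters $[-p],[-q]$) we get $k=0$ so the implication in \eqref{mom} holds vacuously. When $s_\a\neq s_\b$, using $p\neq q$,
\begin{equation*}
 |w_\a|=\frac{|p-q|}{2}|k_1|\leq \frac{|p-q|}{2}|k|,
\end{equation*}
which yields the bound required by \eqref{mom} with $M=|p-q|/2$ and concludes the proof.
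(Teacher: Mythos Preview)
Your proof is correct and follows essentially the same route as the paper: compute $\Lbb$ and $\Mbb$ in the $(r,\theta,\zeta)$ variables (observing the cancellation of the hyperbolic modes $c_q,d_p$ built into \eqref{defpsiang}), then read off the two linear constraints on $(k,j,\ell)$ from the vanishing brackets and bound $|j|$ in terms of $|k|$. One cosmetic slip: the rescaling by $\chi_\rho$ produces an overall factor $\nu$, not $1/\nu$ (since $y=\nu r$ and $|c_k|^2=\nu|\zeta_{k_+}|^2$), but this constant drops out of the constraints. Your constant $M=|p-q|/2$ is in fact sharper than the paper's $M=|(p,q)|$.
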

\proof
We have
\begin{align*}\mathbb{M}&=\sum\limits_{j\in\mathbb{Z}}j(|a_j|^2+|b_j|^2)\\
&=\sum\limits_{j\neq p,q}j(|c_j|^2+|d_j|^2)+ p(\nu\rho_1+y_1)+q(\nu\rho_2+y_2)\\
&=\nu\sum\limits_{j\neq p,q}j(|\zeta_{j+}|^2+|\zeta_{j-}|^2)+ \nu p(\rho_1+r_1)+\nu q(\rho_2+r_2)
\end{align*}
and
 \begin{align*}\mathbb{L}&=\sum\limits_{j\in\mathbb{Z}}|a_j|^2+|b_j|^2\\
&=\nu\sum\limits_{j\neq p,q}(|\zeta_{j+}|^2+|\zeta_{j-}|^2)+ \nu(\rho_1+r_1+\rho_2+r_2).
\end{align*}
Thus if $\a=(j,\pm),\b=(\ell,\pm)\in\L$ with $[\a]=[\b]$  (i.e. $j=\pm\ell$) we have
\begin{align*}
\{e^{ik\cdot\theta} \zeta_\a\bar \zeta_\b,\Mbb\}&=-i\nu(pk_1+qk_2+j-\ell)e^{ik\cdot\theta} \zeta_a\bar \zeta_b
\end{align*}
and
\begin{align*}
\{e^{ik\cdot\theta} \zeta_\a\bar \zeta_\b,\Lbb\}&=-i\nu (k_1+k_2)e^{ik\cdot\theta} \zeta_a\bar \zeta_b.
\end{align*}
Thus $e^{ik\cdot\theta} \zeta_\a\bar \zeta_\b$ is in the jet of $f$ only if $pk_1+qk_2+j-\ell=0$ and $k_1+k_2=0$. In the case $j=\ell$ these conditions lead to $k=(0,0)$ since $p\neq q$. In the case $j=-\ell$ they imply $w_\a=w_\b=|j|\leq |(p,q)| |k|$.
\endproof

\subsubsection{Proof of Theorem \ref{thmuns}}

First, we can check the first point of the theorem by proving that
$\Phi_\rho\left(\mathcal{O}^s(\frac12,\frac{e^{-\frac12}}{2})\right)\subset \mathbf{T}_\rho(\nu,1,1,s)$.
By definition of the rescaling $\chi_\rho$, we have
\begin{equation*}
\chi_\rho\left(\mathcal{O}^s(\frac12,\frac{e^{-\frac12}}{2})\right)=\mathbf{T}_\rho(\nu,\frac12,\frac{e^{-\frac12}}{2},s).
\end{equation*}
Applying the change of variable $\Psi_{ang}$ (see \eqref{defpsiang}), we obtain
\begin{equation*}
 \Psi_{ang}\circ\chi_\rho\left(\mathcal{O}^s(\frac12,\frac{e^{-\frac12}}{2})\right)\subset \mathbf{T}_\rho(\nu,\frac12,\frac12,s).
\end{equation*}
Thus, applying the Birkhoff change of variables $\tau$ which is close to the identity (see Theorem \ref{BNF}), we obtain
\begin{equation*}
 \tau\circ\Psi_{ang}\circ\chi_\rho\left(\mathcal{O}^s(\frac12,\frac{e^{-\frac12}}{2})\right)=\Phi_\rho\left(\mathcal{O}^s(\frac12,\frac{e^{-\frac12}}{2})\right)
 \subset \mathbf{T}_\rho(\nu,1,1,s).
\end{equation*}
Then, we have to check the structure of the Hamiltonian $H\circ\Phi_\rho$. For that purpose, we start from the Hamiltonian $\tilde{H}=H\circ\tau\circ\Psi_{ang}$ defined in 
\eqref{defhtilde}, and we write
\begin{equation*}
H\circ\Phi_\rho=\tilde{H}\circ\chi_\rho=C+\nu h_0+\nu f,
\end{equation*}
where $h_0$ and $f$ are defined by
\begin{equation*}
 h_0:=\frac1\nu H^e\circ\chi_\rho \quad \text{and} \quad f:=\frac1\nu R\circ\chi_\rho.
\end{equation*}
By construction, $h_0$ satisfies the properties of the point (ii) of the theorem.

For the study of $f$, we first need to show that for $\sigma$ and $\mu$ small enough, we have $f\in\Tc_{\rm res}^{s}(\sigma,\mu,\D)$. 
We recall that $R$ is defined in the previous subsubsection. 
From the definition of $R$, we write 
\begin{equation}\label{decompo f}
 f=f_Z+f_{e}+f_5+f_6,
\end{equation}
where $f_Z$ is the part of $f$ that contains the terms $Z_{4,0}$ and $Z_{4,1}$, $f_{e}$ is the explicit part of $f$ and $f_5$ (respectively $f_6$) is the part with the term $R_5$ 
(respectively $R_6$). We remark that for all these terms, the explicit changes of variables $\Psi_{ang}$ and $\frac1\nu\chi_\r$ don't play a role here.
Applying Lemma~\ref{XPanalytic}, we first have $f_Z\in\Tc^{s}(1,1,\D)$. 
For the explicit part $f_{e}$, it is straightforward to check that $\nabla_z f(r,\theta,z,\rho)\in Z_{s}\times Z_s$ as soon as $z\in Z_{s}\times Z_s$. 
Therefore, we have $f_{e}\in\Tc^{s}(1,1,\D)$ too.
Now, by the third point of the Birkhoff normal form Proposition~\ref{BNF}, we also have $f_6\in\Tc^{s}(1,1,\D)$.
Using once again Proposition~\ref{BNF} for the behavior of the change of variable $\tau$ (forth point) and Example~\ref{exf}, there exists $\sigma_0>0$ and $\mu_0>0$
such that $f_5\in\Tc^{s}(\sigma,\mu,\D)$ for $0<\sigma\leq\sigma_0$ and $0<\mu\leq\mu_0$.\\
On the other hand by construction the four Hamiltonians $f_Z$, $f_{e}$, $f_5$ and $f_6$ commute with $\Lbb$ and $\Mbb$ therefore, by Lemma \ref{MoM}, they are all in the restricted class. Finally, we obtain 
\begin{equation*}
 f\in\Tc_{\rm res}^{s}(\sigma,\mu,\D) \quad \text{for} \quad 0<\sigma\leq\sigma_0 \quad \text{and} \quad 0<\mu\leq\mu_0.
\end{equation*}

We fix now $0<\sigma\leq\sigma_0$ and $0<\mu\leq\mu_0$.

Then, for the estimates on the norms of $f$, we remark that $R\circ\chi_\rho$ contains only terms of order at least 2 in $\nu$ 
(for example, $Z_{4,0}\circ\Psi_{ang}\circ\chi_\rho$ and all the other terms, except those with $R_5$ or $R_6$ which are smaller, are of order $\nu^2$), thus we do have 
\begin{equation*}
  [f]^{s}_{\sigma,\mu,\D}\lesssim \nu.
 \end{equation*}
 
Finally, for the estimate on the jet part of $f$, we first remark that by construction
\footnote{Here we use that the Hamiltonian $G=(\sqrt{(\r_1+y_1-|c_q|^2)(\r_2+y_2-|d_p|^2)}-\sqrt{\r_1\r_2})c_qd_p$ satisfies $G^T=0$.},
\begin{equation*}
 f^T=\frac1\nu (R_5\circ\tau\circ\Psi_{ang}\circ\chi_\rho)^T+\frac1\nu(R_6\circ\Psi_{ang}\circ\chi_\rho)^T.
\end{equation*}
Therefore, using that $R_5$ and $R_6$ are of order 5 and 6, we have
\begin{equation*}
 [f^T]^{s}_{\sigma,\mu,\D}\lesssim \nu^{\frac32}.
\end{equation*}
 
\qed

\subsubsection{Study of the $\F$-modes}

Assume that the hypothesis A0, A1 and A2 are satisfied (see Appendix \ref{AppA}), we can apply Theorem~\ref{KAM}.
To study the linear stability of the torus $\Phi(\mathbf{T}^{lin})$, with $\Phi$ defined in  Theorem \ref{KAM}, we thus have to check whether there exists hyperbolic directions or not.
In the effective Hamiltonian $h_0$ defined in Theorem \ref{thmuns}, the modes $a_j$ and $b_j$ ($j\neq p,q$) are elliptic modes 
and thus don't have influence on this linear stability. 
Therefore, in order to study the linear stability of the torus $\Phi(\mathbf{T}^{lin})$, we have to study the semi-external modes $(d_p,c_q)$ to determine 
if they are elliptic or hyperbolic modes.
The variables $(d_p,c_q,\overline{d}_p,\overline c_q)$ satisfy
\begin{equation*}
\begin{pmatrix}
\dot{d}_p \\
\dot{c}_q \\
\dot{\overline{d}}_p \\
\dot{\overline{c}}_q
\end{pmatrix}
=-i\begin{pmatrix}
\frac{\partial h_0}{\partial \overline{d}_p} \\
\frac{\partial h_0}{\partial \overline{c}_q} \\
-\frac{\partial h_0}{\partial d_p} \\
-\frac{\partial h_0}{\partial c_q}
\end{pmatrix}
=M
\begin{pmatrix}
d_p \\
c_q \\
\overline{d}_p \\
\overline{c}_q
\end{pmatrix},
\end{equation*}
where we denote by $M=-iJK(\rho)$ the matrix 
\begin{equation*}
M:=-i\nu\begin{pmatrix}
\r_1-\r_2 & 0 & 0 & \sqrt{\rho_1\rho_2}\\
0 & \r_2-\r_1 & \sqrt{\rho_1\rho_2} & 0 \\
0 & -\sqrt{\rho_1\rho_2} & \r_2-\r_1 & 0\\
-\sqrt{\rho_1\rho_2} & 0 & 0 & \r_1-\r_2\\
\end{pmatrix}.
\end{equation*}
A straightforward computation shows that the characteristic polynomial $\chi_M$ is  
\begin{equation*}
 \chi_M(\lambda)=\left((\lambda-i(\r_2-\r_1))^2-\rho_1\rho_2\right)
 \left((\lambda+i(\r_2-\r_1))^2-\rho_1\rho_2\right).
\end{equation*}
Thus, the eigenvalues of M are 
\begin{align}\label{l12}
 \lambda_{1,2} = i(\r_2-\r_1)\pm \sqrt{\rho_1\rho_2}, \quad \quad \lambda_{3,4} = -i(\r_2-\r_1)\pm \sqrt{\rho_1\rho_2}.
\end{align}
Therefore, we have four eigenvalues $\lambda_i$, $i=1..4$ which satisfy
\begin{equation*}
 \Re(\lambda_i)\neq0, \quad \text{for} \quad i=1\ldots4.
\end{equation*}
For $\r_1\neq \r_2$, we have four different eigenvalues and we can diagonalize the matrix $M$.
For $\r_1= \r_2$, we have two double eigenvalues: $\pm \nu\sqrt{\rho_1\rho_2}$ but the matrix $M$ is still diagonalizable (we can check by example that the two vectors 
$^t(1,0,0,i)$ and $^t(0,1,-i,0)$ are eigenvectors associated to the eigenvalue $\nu\sqrt{\rho_1\rho_2}$, another way to prove this is to use the fact that all real skew matrix is 
diagonalizable with purely imaginary eigenvalues).
Therefore, in both cases, we have two hyperbolic directions, 
this implies the linear instability of the torus $\mathcal{T}:=\left\{y=0\right\}\times\mathbb{T}^2\times\left\{z=0\right\}$.

\begin{remark} The matrix $M$ does not depend on the choice of the excited modes $p\neq q$.\end{remark}

\subsubsection{Structure of \texorpdfstring{$h_0$}{hzero}}
In order to apply the KAM theorem, let us see that we can write the Hamiltonian $h_0$ with the normal structure \eqref{NN}.
By equation \eqref{h01}, we have 
\begin{equation*}
 h_0=\Omega(\rho)\cdot r+ \sum_{j\neq p,q}\left( \Lambda^a_j(\rho)|a_j|^2+\Lambda^b_j(\rho)|b_j|^2 \right)
 + \frac12\langle z_f,K(\rho)z_f \rangle.
\end{equation*}
We use the notations introduced in Example \ref{ex22}. First we remark for the elliptic part that 
\begin{equation*}
 \sum_{j\neq p,q}\left( \Lambda^a_j(\rho)|a_j|^2+\Lambda^b_j(\rho)|b_j|^2 \right)=
 \sum_{\alpha\in\L}\Lambda_\a(\rho)|\zeta_\a|^2,
\end{equation*}
where 
\begin{equation*}
 \Lambda_{j\pm}(\rho)=j^2+\nu\rho_\pm \text{ with } \rho_+=\rho_2 \text{ and } \rho_-=\rho_1.
\end{equation*}
For the part related to the matrix $K(\rho)$, the study of the matrix $M$ in the previous subsection and the eigenvectors associated to the eigenvalues computed in \eqref{l12} suggest 
the introduction of the following symplectic change of variables
\begin{equation}\label{def_ef}
\begin{dcases}
 \zeta_e=\frac1{\sqrt2}(c_q+i\bar d_p), \qquad  \bar \zeta_e=\frac1{\sqrt2}(\bar c_q+id_p), \\
 \zeta_f=\frac1{\sqrt2}(d_p+i\bar c_q), \qquad  \bar \zeta_f=\frac1{\sqrt2}(\bar d_p+ic_q),
\end{dcases}
\end{equation}
where the variables $(d_p,c_q)$ are defined in \eqref{defpsiang}. 
We remark here that $\bar \zeta_e$ and $\bar \zeta_f$ are not the complex conjugates of $\zeta_e$ and $\zeta_f$, but the Hamiltonian dual variables of $\zeta_e$ and $\zeta_f$ 
in the following sense (for example for $\zeta_e$):
\begin{equation*}
 \partial_t \zeta_e = -i\frac{\partial h_0}{\bar \zeta_e}, \qquad \partial_t \bar\zeta_e = i\frac{\partial h_0}{\zeta_e}.
\end{equation*}
In the new variables, we have
\begin{align*}
 \frac12\langle z_f,K(\rho)z_f \rangle :&= (\r_2-\r_1)(|d_p|^2-|c_q|^2) + \nu\sqrt{\rho_1\rho_2}(\overline c_q\overline d_p+c_qd_p) \\
 &= (\r_1-\r_2-i\nu\sqrt{\rho_1\rho_2})|\zeta_e|^2+(\r_2-\r_1-i\nu\sqrt{\rho_1\rho_2})|\zeta_f|^2.
\end{align*}
Finally, we can write 
\begin{equation}\label{h01nf}
 h_0=\Omega(\rho)\cdot r+ \sum_{\alpha\in\Zc=\L\cup\F}\Lambda_\a(\rho)|\zeta_\a|^2,
\end{equation}
where $\Omega(\rho)= \begin{pmatrix}
p^2+\nu\rho_2 \\ q^2+\nu\rho_1 
\end{pmatrix}$, $\zeta_\a$ is defined by \eqref{defIthetazeta} for $\a\in\L=\Z\setminus\{p,q\}\times\{\pm\}$, 
$\zeta_\a$ is defined by \eqref{def_ef} for $\a\in\F=\{e,f\}$, and 
\begin{align*}
\begin{dcases}
\Lambda_\a(\rho)&= j^2+\nu\rho_{\pm} \text{ for } \a=(j,\pm)\in\L \text{ with } \rho_{+}=\rho_2 \text{ and } \rho_{-}=\rho_1, \\
\Lambda_\a(\rho)&=
\begin{dcases}
\r_1-\r_2 -i\nu\sqrt{\rho_1\rho_2} \text{ for } \a=e\in\F, \\
\r_2-\r_1 -i\nu\sqrt{\rho_1\rho_2} \text{ for } \a=f\in\F.
\end{dcases}
\end{dcases}
\end{align*}

\subsubsection{Application of our KAM result.} \label{subsubAppli}
To apply Theorem \ref{KAM} to the Hamiltonian $\tilde H$ given by \eqref{H1} it remains to verify Hypothesis A0, A1 and A2 for $h_0$ given by \eqref{h01nf}. 
This is done in Appendix \ref{AppA} where we prove that Hypothesis A1 and A2 are satisfied for $\delta=\frac12\nu$. Therefore, we obtain
\begin{theorem}\label{KAMunstable}
Fix $p\neq q$. There exists $\nu_0>0$ and for $0<\nu<\nu_0$ there exists $\Cc_\nu\subset [1,2]^2$ asymptotically of full measure 
(i.e. $\lim_{\nu\to 0}\meas([1,2]^2\setminus \Cc_\nu )=0$) such that for $\r\in\Cc_\nu$ the torus 
$\Tc_{\nu\r}:=\{|a_p|^2=\nu \r_1,\ |b_q|^2=\nu \r_2, \text{all other modes vanishing}  \}$, which is invariant for the Hamiltonian flow associated to $P_2$, 
persists in slightly deformed way under the perturbation $P_4+R_5$. Furthermore this invariant torus is linearly unstable.
\end{theorem}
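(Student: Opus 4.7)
The plan is to apply the abstract KAM Theorem \ref{KAM} to the Hamiltonian $\tilde H = h_0 + f$ produced by Theorem \ref{thmuns}. First, I would recall from \eqref{H1} and \eqref{h01nf} that after the symplectic change of variables $\Phi_\rho = \tau\circ\Psi_{ang}\circ\chi_\rho$ (up to the additive constant $C$ which plays no dynamical role), the rescaled Hamiltonian takes exactly the normal form required by \eqref{NN}, with internal frequency vector $\Omega(\rho)=(p^2+\nu\rho_2,\,q^2+\nu\rho_1)$, elliptic spectrum $\Lambda_{j\pm}(\rho)=j^2+\nu\rho_{\mp}$ for $\a\in\L$, and a two-dimensional hyperbolic block indexed by $\F=\{e,f\}$ with eigenvalues $\pm(\rho_1-\rho_2)-i\nu\sqrt{\rho_1\rho_2}$. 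The spectral asymptotic A0 is then immediate since $|\Lambda_{j\pm}-j^2|=\nu\rho_{\mp}\le 2\nu$.

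Next I would invoke the verification of Hypotheses A1 and A2 carried out in Appendix~\ref{AppA}, which show these conditions hold for $h_0$ with $\delta=\tfrac{1}{2}\nu$ on the parameter set $\D=[1,2]^2$, using Remark \ref{momentum} to discard the resonant monomials forbidden by conservation of mass and momentum (this is exactly what \ref{ex22} and Lemma \ref{MoM} deliver). Then I would check the smallness hypothesis \eqref{eps}: from Theorem \ref{thmuns} (iii) we have $[f]^{s}_{\sigma,\mu,\D}\lesssim \nu$ and $[f^T]^{s}_{\sigma,\mu,\D}\lesssim \nu^{3/2}$, while $\delta\simeq \nu$; picking, say, $\gamma=1/4$ the inequalities $[f]^{s}\le\eps_0\delta$ and $[f^T]^{s}\le \eps_0\delta^{1+\gamma}$ both hold for $\nu$ small enough.

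With the three hypotheses satisfied, Theorem \ref{KAM} produces a Cantor set $\D'\subset\D=[1,2]^2$ of asymptotically full measure (since the excluded measure is controlled by $\eps\lesssim \nu^{3/2}\to 0$) and, for each $\rho\in\D'$, a symplectic map $\Phi$ conjugating $\tilde H$ to a Hamiltonian of the form $\tilde h+\tilde g$ with $\tilde g^T\equiv 0$ and $\tilde h$ on normal form with new frequency vector $\omega(\rho)=\Omega(\rho)+\mathcal{O}(\nu^{3/2})$, new elliptic block $A$ and new hyperbolic block $K$ both within $\mathcal{O}(\nu^{3/2})$ of their original counterparts. Setting $\Cc_\nu=\D'$ and composing with $\Phi_\rho$ gives the desired invariant torus for $P_2+P_4+R_5$, which is the deformation of $\Tc_{\nu\rho}$ announced in the statement.

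Finally I would conclude linear instability. Since $JK$ is $\mathcal{O}(\nu^{3/2})$-close to the diagonal matrix with the hyperbolic eigenvalues \eqref{l12}, whose real parts are $\pm\nu\sqrt{\rho_1\rho_2}\neq 0$ uniformly in $\rho\in[1,2]^2$, a standard perturbation of spectrum argument shows that the linearized vector field on the perturbed torus still has two eigenvalues with strictly positive real part for $\nu$ small. The block-diagonal structure of $A$ on $\L$ (Hermitian by \eqref{ANF}) shows that the elliptic directions remain elliptic and contribute only purely imaginary eigenvalues, and the coupling via $\partial^2_{rz}g$ in the linearized system around the torus is time-quasi-periodic and bounded, hence does not affect the instability. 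The main technical obstacle, already isolated in Appendix~\ref{AppA}, is the verification of the transversality conditions A2(iii)--(iv) involving the hyperbolic frequencies, where one must exploit both the explicit $\rho$-dependence of $\Omega$ and the momentum conservation to eliminate otherwise uncontrolled small divisors; everything else in the proof is bookkeeping on top of Theorems \ref{thmuns} and \ref{KAM}.
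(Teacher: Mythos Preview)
Your proposal is correct and follows essentially the same route as the paper: the proof of Theorem~\ref{KAMunstable} in the paper is nothing more than the observation that Theorem~\ref{thmuns} puts the Hamiltonian into the form required by Theorem~\ref{KAM}, that Appendix~\ref{AppA} verifies Hypotheses A0--A2 with $\delta=\tfrac12\nu$, and that the presence of the nonempty hyperbolic block $\F$ forces linear instability by the dynamical consequence stated after Theorem~\ref{KAM}. Your write-up expands exactly this skeleton.

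One small slip worth correcting: you assert that both inequalities in \eqref{eps} hold ``for $\nu$ small enough''. For the second one, $[f^T]^s\lesssim\nu^{3/2}\le\eps_0\delta^{1+\gamma}$ with $\gamma<1/2$, this is indeed a smallness condition on $\nu$. But for the first one, $[f]^s\lesssim\nu$ and $\delta=\tfrac12\nu$ are of the \emph{same} order in $\nu$, so shrinking $\nu$ alone does not produce $[f]^s\le\eps_0\delta$. What actually makes this work is that the implicit constant in $[f]^s_{\sigma,\mu,\D}\lesssim\nu$ carries a positive power of $\mu$ (the terms of $R\circ\chi_\rho$ are at least quartic in the rescaled external variables on $\O^s(\sigma,\mu)$), so one first shrinks $\mu$ to force the constant below $\eps_0/2$, and only then takes $\nu$ small for the jet estimate. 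The paper glosses over this point as well, but since you spelled out the smallness check you should state it correctly.
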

We can formulate our result in terms of small amplitude quasi periodic solutions. 
We notice that, in view of Theorem \ref{thmuns} we have 
\begin{equation*}
 \eps= [f^T]^{s}_{\frac12,\frac{e^{-\frac12}}{2},\D}=\O(\nu^{3/2}),
\end{equation*}
 therefore Theorem \ref{coro1} is a corollary of Theorem \ref{KAMunstable}.

%

\subsection{2nd case: emphasizing a stable linear torus}\label{subsecstable}

Here, we study the missing case of the previous subsection: the case $p=q$. Therefore, we start with the following solution of the linear system:
\begin{equation}\label{sollin2}
 u(x,t)=a_{p}e^{ipx}e^{-ip^2t}, \quad v(x,t)=b_{p}e^{ipx}e^{-ip^2t}.
\end{equation}
In this  case to insure that the Hamiltonian functions are in the restricted class defined by \eqref{mom}, we have to replace assumption \eqref{g} by 
\begin{equation}\label{g'}
g(u,\bar u,v,\bar v)=f(|u|^2,|v|^2)
\end{equation}
in such a way that $R_5$ commutes with the partial mass
\begin{align}
\label{L1} \Lbb_u&=\sum_{j\in\Z} |a_j|^2\\
\label{L2} \Lbb_v&=\sum_{j\in\Z} |b_j|^2.
\end{align}
The goal of this section is to prove the following:

\begin{theorem}\label{thmsta} 
There exist $\nu_0>0$, $\sigma_0>0$ and $\mu_0>0$ such that, for $s>\frac12$, $0<\nu\leq\nu_0$, $0<\sigma\leq\sigma_0$, $0<\mu\leq\mu_0$  and $ \rho\in\D$
\begin{enumerate}[(i)]
 \item There exist
 \begin{equation*}
  \Phi_\rho:\left(\begin{array}{lll}&\mathcal{O}^s(\frac12,\frac{e^{-\frac12}}{2})\rightarrow &\mathbf{T}_\rho(\nu,1,1,s)\\ 
 &(r,\theta,z)\mapsto &(I,\theta,z') \end{array}\right)
 \end{equation*}
real holomorphic transformations, analytically depending on $\rho$, which transform the symplectic structure 
$-\mathrm{d}r\wedge\mathrm{d}\theta-i\mathrm{d}\zeta\wedge\mathrm{d}\overline{\zeta}$ on $\mathcal{O}^s(\frac12,\frac{e^{-\frac12}}{2})$ to the symplectic structure
$-\nu\mathrm{d}I\wedge\mathrm{d}\theta-i\nu\mathrm{d}\zeta'\wedge\mathrm{d}\overline{\zeta'}$ on $\mathbf{T}_\rho(\nu,1,1,s)$.
The change of variables $\Phi_\rho$ is close to a certain affine transformation depending on $\theta$.

 \item $\Phi_\rho$ puts the Hamiltonian $H=P_2+P_4+R_5$ in  normal form in the following sense:
\begin{equation}\label{H2}
 \frac1\nu\left(H\circ\Phi_\rho-C\right)(r,\theta,z)=h_0(r,z)+f(r,\theta,z),
\end{equation}
where $C=\nu^2\rho_1\rho_2+\nu p^2(\rho_1+\rho_2)$ is a constant and the effective part $h_0$ of the Hamiltonian reads 
\begin{equation}\label{h02}
 h_0=\Omega(\rho)\cdot r + \sum_{j\neq p}\left( \Lambda^a_j(\rho)|a_j|^2+\Lambda^b_j(\rho)|b_j|^2 \right).
\end{equation}
The frequencies $\Omega_\rho$ are given by
\begin{equation*}
 \Omega(\rho)=
\begin{pmatrix}
p^2+\nu\rho_2 \\ p^2+\nu\rho_1 
\end{pmatrix},
\end{equation*}
the eigenvalues $\Lambda^a_j$ and $\Lambda^b_j$ are defined by
\begin{equation*}
\Lambda^a_j(\rho)=j^2-p^2+\nu\sqrt{\rho_1\rho_2} \quad \text{and} \quad \Lambda^b_j(\rho)=j^2-p^2-\nu\sqrt{\rho_1\rho_2}.
\end{equation*}

 \item The remainder term $f$ belongs to $\Tc_{\rm res}^{s}(\sigma,\mu,\D)$ and satisfies
 \begin{equation*}
  [f]^{s}_{\sigma,\mu,\D}\lesssim \nu \quad \text{and} \quad [f^T]^{s}_{\sigma,\mu,\D}\lesssim \nu^{\frac32}.
 \end{equation*}
\end{enumerate}
\end{theorem}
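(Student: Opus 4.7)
The plan is to mimic the proof of Theorem \ref{thmuns}, the crucial difference being that when $p=q$ the beating part of $Z_4$ carries the phase $e^{i(\theta_1-\theta_2)}$ rather than $e^{i(\theta_1+\theta_2)}$; this sign change will convert hyperbolic into elliptic directions, and is the reason for the linear stability of the torus. Starting from $H^B=P_2+Z_4+R_5\circ\tau+R_6$ together with the action-angle coordinates $a_p=\sqrt{\nu\rho_1+x_1}\,e^{i\theta_1}$ and $b_p=\sqrt{\nu\rho_2+x_2}\,e^{i\theta_2}$, I would decompose $Z_4$ according to how many internal-mode factors each monomial carries. Only $|a_p|^2|b_p|^2$ appears at order four, the order-three contribution is empty (forbidden by $\{k,\ell\}=\{i,j\}$), and the order-two piece splits into the diagonal part $|a_p|^2\sum_{\ell\neq p}|b_\ell|^2+|b_p|^2\sum_{k\neq p}|a_k|^2$ and the beating $a_p\bar b_p\sum_{\ell\neq p}b_\ell\bar a_\ell + \bar a_pb_p\sum_{\ell\neq p}\bar b_\ell a_\ell$.

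To absorb the phase $e^{i(\theta_1-\theta_2)}$ I would introduce the symplectic change of variables
\[
\Psi_{ang}:\ c_k=a_ke^{-i\theta_1},\ d_k=b_ke^{-i\theta_2}\ (k\neq p),\quad y_1=x_1+\!\!\sum_{k\neq p}|c_k|^2,\ y_2=x_2+\!\!\sum_{k\neq p}|d_k|^2,
\]
whose symplecticity follows from $-i\,\mathrm d a_k\wedge\mathrm d\bar a_k=-i\,\mathrm d c_k\wedge\mathrm d\bar c_k-\mathrm d|c_k|^2\wedge\mathrm d\theta_1$ and its analogue for $b,d$. The identity $b_\ell\bar a_\ell=d_\ell\bar c_\ell\,e^{-i(\theta_1-\theta_2)}$ makes the beating phase cancel, producing the clean coupling $\nu\sqrt{\rho_1\rho_2}(c_k\bar d_k+\bar c_kd_k)$. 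Two cancellations then combine to give the advertised effective Hamiltonian: the substitution $x_j=y_j-\sum|\cdot|^2$ in $P_2$ redshifts $k^2\mapsto k^2-p^2$ on the external modes, and the residuals $-\nu\rho_2\sum|c_k|^2-\nu\rho_1\sum|d_k|^2$ coming from the linear-in-$x$ part of $|a_p|^2|b_p|^2$ are exactly cancelled by the effective parts of the diagonal cross terms $\nu\rho_1\sum|d_\ell|^2+\nu\rho_2\sum|c_k|^2$. The result is
\[
H^e=C+(p^2+\nu\rho_2)y_1+(p^2+\nu\rho_1)y_2+\!\!\sum_{k\neq p}\!\!\Bigl[(k^2-p^2)(|c_k|^2+|d_k|^2)+\nu\sqrt{\rho_1\rho_2}(c_k\bar d_k+\bar c_kd_k)\Bigr]
\]
with $C=\nu^2\rho_1\rho_2+\nu p^2(\rho_1+\rho_2)$. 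Diagonalizing each $(c_k,d_k)$-block by the orthogonal symplectic rotation $e_k=(c_k+d_k)/\sqrt 2$, $f_k=(c_k-d_k)/\sqrt 2$ yields the \emph{real} eigenvalues $\Lambda^a_k=k^2-p^2+\nu\sqrt{\rho_1\rho_2}$ and $\Lambda^b_k=k^2-p^2-\nu\sqrt{\rho_1\rho_2}$, so $\F=\emptyset$ and the torus is linearly stable. Composing with the rescaling $\chi_\rho(r,\theta,z)=(\nu y,\theta,\nu^{1/2}z')$ defines $\Phi_\rho$, and $(H\circ\Phi_\rho-C)/\nu=h_0+f$ has the required normal form.

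The size estimates $[f]^s_{\sigma,\mu,\D}\lesssim\nu$ and $[f^T]^s_{\sigma,\mu,\D}\lesssim\nu^{3/2}$ follow exactly as in the proof of Theorem \ref{thmuns}, since all explicit remainders from $Z_4$ (namely $x_1x_2$, $x_1\sum|d_k|^2$, $x_2\sum|c_k|^2$, $Z_{4,0}$, and the jetless $(\sqrt{I_1I_2}-\nu\sqrt{\rho_1\rho_2})\sum(c_k\bar d_k+\bar c_kd_k)$) are $\O(\nu^2)$ before division by $\nu$, while the jet estimate arises from $R_5\circ\tau$ and $R_6$. The genuinely new ingredient is the proof that $f\in\Tc_{\rm res}^{s}$: Lemma \ref{MoM} is no longer usable because for $p=q$ the joint constraints $p(k_1+k_2)+j-\ell=0$ and $k_1+k_2=0$ reduce to $j=\ell$, which within a cluster $[\alpha]=[\beta]$ does not force $\alpha=\beta$. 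This is precisely why assumption \eqref{g} is strengthened to \eqref{g'}: the latter guarantees that the Hamiltonian commutes with the partial masses $\mathbb{L}_u$ and $\mathbb{L}_v$ \emph{separately}, and under $\Psi_{ang}$ these become $\nu\rho_1+y_1$ and $\nu\rho_2+y_2$ respectively, so commutation with them forces $k_1=k_2=0$ in every Fourier mode of $f$, rendering condition \eqref{mom} vacuous. The main obstacle of the argument lies in the delicate bookkeeping of the angle-killing transformation: unlike Theorem \ref{thmuns} where only two semi-external modes are rotated, here \emph{all} external modes must be twisted, and one must verify that the $P_2$ redshift, the linear-in-$x$ contribution of $|a_p|^2|b_p|^2$ and the diagonal cross terms combine with exactly the right coefficients to produce a Hermitian (not skew-Hermitian) coupling matrix — this Hermiticity is precisely what ensures real eigenvalues and hence $\F=\emptyset$.
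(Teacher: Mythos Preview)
Your proposal is correct and follows essentially the same route as the paper: the same angle-killing map $\Psi_{ang}$ with $c_k=a_ke^{-i\theta_1}$, $d_k=b_ke^{-i\theta_2}$, the same diagonalising rotation $e_k=(c_k+d_k)/\sqrt2$, $f_k=(c_k-d_k)/\sqrt2$ (called $\Psi_{sym}$ in the paper), the same rescaling $\chi_\rho$, and the same appeal to the partial masses $\mathbb L_u,\mathbb L_v$ (Lemma~\ref{MoM'}) to land in $\Tc_{\rm res}^{s}$. Your bookkeeping of the $P_2$ redshift $k^2\mapsto k^2-p^2$ and the cancellation between the linear-in-$x$ part of $|a_p|^2|b_p|^2$ and the diagonal cross terms is in fact more explicit than the paper's, and your identification of the beating phase as $e^{i(\theta_1-\theta_2)}$ is correct (the paper's displayed $e^{i(\theta_1+\theta_2)}$ in $Z_{4,2}^e$ is a typo).
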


\subsubsection{Set up of the change of variables \texorpdfstring{$\Phi_\rho$}{phirho}}
As in the previous subsection, we have to eliminate the angles, and then to perform a rescaling of the variables.

\subsubsubsection{Structure of the Hamiltonian and elimination of the angles}
As the linear term $P_2$ contributes already to the constant part $C$ and to the effective Hamiltonian $h_0$, the main difference with the study of two different modes is the behavior 
of the resonant term $Z_4$. We split the polynomial $Z_4$ according to the number of inner modes $a_p$ and $b_p$.
The term of order 4 in $a_{p}$, $b_{p}$ from $Z_4$ is given by 
\begin{equation*}
 Z_{4,4}:=|a_{p}|^2|b_{p}|^2=(\rho_1+x_1)(\rho_2+x_2)=\underbrace{\nu^2\rho_1\rho_2}_{\text{constant}}+\underbrace{\nu\rho_2x_1+\nu\rho_1x_2}_{\text{effective part}}
+\underbrace{x_1x_2}_{\text{remainder}}.
\end{equation*}
The terms of order 2 in $a_{p}$, $b_{p}$ from $Z_4$ are 
\begin{align*}
 Z_{4,2}:=|a_{p}|^2\sum_{k\neq p}|b_k|^2+|b_{p}|^2\sum_{k\neq p}|a_k|^2+
 a_{p}\overline b_{p}\sum_{k\neq p}\overline a_{k}b_{k}+\overline a_pb_{p}\sum_{k\neq p}a_{k}\overline b_{k}.
\end{align*}
Separating the effective part (with jet) from the remainder part (without jet), we show that the effective part of $Z_{4,2}$ is given by
\begin{align*}
 Z_{4,2}^e:=&\nu\rho_1\sum_{k\neq p}|b_k|^2+\nu\rho_2\sum_{k\neq p}|a_k|^2\\
 &+\nu\sqrt{\rho_1\rho_2}\left(e^{i(\theta_1+\theta_2)}\sum_{k\neq p}\overline a_{k}b_{k}+e^{-i(\theta_1+\theta_2)}\sum_{k\neq p}\overline a_pb_{p}\right).
\end{align*}
Then, in order to kill the angles, we introduce the symplectic change of variables 
\begin{equation*}
\Psi_{ang}\left(x,\theta,z=(a,b)\right)=\left(y,\theta,z'=(c,d)\right),  
\end{equation*}
where the new variables $c,d$ and $y$ are defined by
\begin{equation}
\begin{dcases}
 c_{k}&=a_{k}e^{-i\theta_1}, \quad \quad d_{k}=b_{k}e^{-i\theta_2}, \quad k\neq p, \\
 y_1&=x_1+\sum_{k\neq p}|a_{k}|^2, \quad \quad y_2=x_2+\sum_{k\neq p}|b_{k}|^2.
\end{dcases} \label{defpsiang2}
\end{equation}
Finally, the term $Z_{4,0}$ of order 0 in $a_p$, $b_p$ from $Z_4$ is still a remainder term.
We can thus write the Hamiltonian $\tilde{H}=H^B\circ\Psi_{ang}$ as
\begin{equation}\label{defhtilde2}
 \tilde{H}=H^B\circ\Psi_{ang}=C+H^e+R,
\end{equation}
where the constant part $C$ is given by 
\begin{equation*}
C=\nu^2\rho_1\rho_2+\nu p^2(\rho_1+\rho_2),
\end{equation*}
the remainder term $R$ is defined by
\begin{align*}
R=&Z_{4,0}\circ\Psi_{ang}+(y_1-\sum_{k\neq p}|c_{k}|^2)\sum_{k\neq q}|d_k|^2+(y_2-\sum_{k\neq p}|d_{k}|^2)\sum_{k\neq p}|c_k|^2\\
&+\alpha(\nu,\rho,z')\sum_{k\neq p}(\overline c_{k}d_{k}+\overline d_{k}c_{k})+(y_1-\sum_{k\neq p}|c_{k}|^2)(y_2-\sum_{k\neq p}|d_{k}|^2)\\
&+R_5\circ\tau\circ\Psi_{ang}+R_6\circ\Psi_{ang},
\end{align*}
with
\begin{equation*}
 \alpha(\nu,\rho,z')=\sqrt{(\nu\rho_1+y_1-\sum_{k\neq p}|c_{k}|^2)(\nu\rho_2+y_2-\sum_{k\neq p}|d_{k}|^2)}-\nu\sqrt{\rho_1\rho_2},
\end{equation*}
and the effective Hamiltonian $H^e$ reads 
\begin{align*}
 H^e =&(p^2+\nu\rho_2)y_1+(p^2+\nu\rho_1)y_2
 +\sum_{k\neq p}(k^2-p^2)(|c_{k}|^2+|d_{k}|^2)\\
 &+\nu\sqrt{\rho_1\rho_2}\left(\sum_{k\neq p}\overline{c_{k}}d_{k}+\sum_{k\neq p}c_{k}\overline{d_{k}}\right).
\end{align*}
The new frequencies are thus given by
\begin{equation*}
 \Omega(\rho)=\left( \begin{array}{c}
p^2+\nu\rho_2 \\
p^2+\nu\rho_1
\end{array} \right).
\end{equation*}
The last term in $H^e$ is not a diagonal term (\textit{i.e} in $|c_{k}|^2$ and $|d_{k}|^2$).
Nevertheless, due to its symmetries, the good idea is to introduce a new symplectic change of variables
\begin{equation*}
\Psi_{sym}\left(I',\theta,z'=(c,d)\right)=\left(I',\theta,z''=(e,f)\right),  
\end{equation*}
where
\begin{equation*}
 e_k=\frac{c_k+d_k}{\sqrt2}, \quad f_k=\frac{c_k-d_k}{\sqrt2}.
\end{equation*}
A good way to see how this change of variables appears is to look at the equations satisfied by $(c_k,\bar c_k,d_k,\bar d_k)$ (for $k\neq p$). We have
\begin{equation*}
 i\begin{pmatrix}
\dot{c}_k \\
\dot{\overline{c}}_k \\
\dot{d}_k \\
\dot{\overline{d}}_k
\end{pmatrix}
=\begin{pmatrix}
\frac{\partial H^e}{\partial \overline{c}_k} \\
-\frac{\partial H^e}{\partial c_k} \\
\frac{\partial H^e}{\partial \overline{d}_k} \\
-\frac{\partial H^e}{\partial d_k}
\end{pmatrix}
=\begin{pmatrix}
k^2-p^2 & 0 & \sqrt{\rho_1\rho_2}  & 0\\
0 & p^2-k^2 & 0 & -\sqrt{\rho_1\rho_2} \\
\sqrt{\rho_1\rho_2} & 0 & k^2-p^2 & 0\\
0 & -\sqrt{\rho_1\rho_2} & 0 & p^2-k^2\\
\end{pmatrix}
\begin{pmatrix}
c_k \\
\overline{c}_k \\
d_k \\
\overline{d}_k
\end{pmatrix}.
\end{equation*} 
The diagonalization of the matrix $M$, defined by
\begin{equation*}
M:=-i\begin{pmatrix}
k^2-p^2 & 0 & \sqrt{\rho_1\rho_2}  & 0\\
0 & p^2-k^2 & 0 & -\sqrt{\rho_1\rho_2} \\
\sqrt{\rho_1\rho_2} & 0 & k^2-p^2 & 0\\
0 & -\sqrt{\rho_1\rho_2} & 0 & p^2-k^2\\
\end{pmatrix},
\end{equation*}
allows to introduce the variables $e_k=\frac{c_k+d_k}{\sqrt2}$ and $f_k=\frac{c_k-d_k}{\sqrt2}$.
In these new variables, we obtain 
\begin{align*}
 H^e\circ\Psi_{sym} =&(p^2+\nu\rho_2)y_1+(p^2+\nu\rho_1)y_2
 +\sum_{k\neq p}(k^2-p^2+\nu\sqrt{\rho_1\rho_2})|e_{k}|^2 \\
  &+\sum_{k\neq p}(k^2-p^2-\nu\sqrt{\rho_1\rho_2})|f_{k}|^2.
\end{align*}
To finish the construction of the change of variables $\Phi_\rho$, we just have now to rescale the variables $e$, $f$ and $y$.

\subsubsubsection{Rescaling of the variables and introduction of \texorpdfstring{$\Phi_\rho$}{phirho}}

As in the previous case, we introduce the rescaling by the change of variables
\begin{equation*}
 \chi_\rho\left(r,\theta,z''\right)=(x,\theta,z),
\end{equation*}
where
\begin{equation*}
(x,\theta,z):=(\nu r,\theta,\nu^\frac12z'').
\end{equation*}
The symplectic structure becomes 
\begin{equation*}
 -\nu\mathrm{d}r\wedge\mathrm{d}\theta-i\nu\mathrm{d}\zeta\wedge\mathrm{d}\overline{\zeta}.
\end{equation*}
By definition of $\chi_\rho$ we have
\begin{equation*}
\chi_\rho\left(\mathcal{O}^s(\sigma,\mu)\right)=\mathbf{T}_\rho(\nu,\sigma,\mu,s).
\end{equation*}
We can now define the change of variable $\Phi_\rho$ of Theorem \ref{thmsta} by:
\begin{equation}\label{defphirho2}
 \Phi_\rho:=\tau\circ\Psi_{ang}\circ\Psi_{sym}\circ\chi_\rho.
\end{equation}

\subsubsubsection{Remainders are in the good class }

We use the clustering defined in Example \ref{ex21} and consider the class $\Tc^{s}(\s,{\mu})$ related to the new variables $(r,\theta,z)$ induced by $ \Phi_\rho$. The following lemma will insure that all the remainders are in the restricted class defined by \eqref{mom}.
\begin{lemma}\label{MoM'} Let $f\in\Tc^{s}(\s,{\mu})$. 
If $f$ commutes with the partial mass $\Lbb_u$ and $\Lbb_v$ then $f\in\Tc_{\rm res}^{s}(\s,{\mu})$.
\end{lemma}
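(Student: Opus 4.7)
The plan is to mirror the proof of Lemma~\ref{MoM}, the essential step being to express $\Lbb_u$ and $\Lbb_v$ in the final coordinates $(r,\theta,z)$ produced by $\Phi_\rho=\tau\circ\Psi_{ang}\circ\Psi_{sym}\circ\chi_\rho$. In contrast to the unstable case, here the angular substitution~\eqref{defpsiang2} rotates \emph{every} external mode, since $c_k=a_ke^{-i\theta_1}$ and $d_k=b_ke^{-i\theta_2}$ for all $k\neq p$. Consequently $|a_k|^2=|c_k|^2$ and $|b_k|^2=|d_k|^2$, and using the defining relation $y_1=x_1+\sum_{k\neq p}|c_k|^2$ from~\eqref{defpsiang2} one obtains
\begin{align*}
\Lbb_u \;=\; |a_p|^2+\sum_{k\neq p}|a_k|^2 \;=\; \nu\rho_1+x_1+\sum_{k\neq p}|c_k|^2 \;=\; \nu\rho_1+y_1,
\end{align*}
and similarly $\Lbb_v=\nu\rho_2+y_2$.

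Next, $\Psi_{sym}$ acts only on the $(c,d)$ variables and preserves $(y,\theta)$, so the identities above remain valid after $\Psi_{sym}$. The rescaling $\chi_\rho$ then turns $y$ into $\nu r$, so in the final coordinates
\begin{align*}
\Lbb_u=\nu(\rho_1+r_1),\qquad \Lbb_v=\nu(\rho_2+r_2);
\end{align*}
i.e.\ both partial masses are pure functions of the actions. Since $r$ is conjugate to $\theta$ in the new symplectic structure, the hypothesis $\{f,\Lbb_u\}=\{f,\Lbb_v\}=0$ is equivalent to $\partial_{\theta_1}f=\partial_{\theta_2}f=0$, i.e.\ $f$ does not depend on $\theta$.

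To finish, recall from~\eqref{jet} that the $\theta$-dependence of the jet $f^T$ comes only from $f(0,\theta,0)$ and from the evaluations of $\partial_r f,\partial_z f,\partial_z^2 f$ at $r=z=0$; if $f$ itself is $\theta$-independent, so are all of these. In particular no monomial of the form $e^{ik\cdot\theta}\zeta_\alpha\bar\zeta_\beta$ with $k\neq 0$ appears in $f^T$, and the restricted-class condition~\eqref{mom} is vacuously satisfied (for any $M$). There is no genuine obstacle here; the whole content is the bookkeeping of how $\Psi_{ang},\Psi_{sym},\chi_\rho$ transport $\Lbb_u,\Lbb_v$, together with the observation that, unlike in the unstable case of Lemma~\ref{MoM}, the full-mode rotation built into $\Psi_{ang}$ collapses both partial masses onto pure action variables, which is precisely the feature that makes the stable case so simple.
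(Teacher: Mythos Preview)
Your proof is correct and follows essentially the same approach as the paper: both compute $\Lbb_u=\nu(\rho_1+r_1)$ and $\Lbb_v=\nu(\rho_2+r_2)$ in the final coordinates and use commutation to force $k=0$. The only cosmetic difference is that you deduce $\partial_{\theta_1}f=\partial_{\theta_2}f=0$ globally, whereas the paper computes $\{e^{ik\cdot\theta}\zeta_\a\bar\zeta_\b,\Lbb_u\}=-i\nu k_1 e^{ik\cdot\theta}\zeta_\a\bar\zeta_\b$ (and similarly for $\Lbb_v$) monomial-by-monomial to reach the same conclusion.
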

\proof
We have
 \begin{align*}\Lbb_u&=\sum_{j\in\mathbb{Z}}|a_j|^2=\nu(\rho_1+r_1)\\
\Lbb_v&=\sum_{j\in\mathbb{Z}}|b_j|^2=\nu(\rho_2+r_2).
\end{align*}
Thus if $\a=(j,\pm),\b=(\ell,\pm)\in\L$ with $[\a]=[\b]$  (i.e. $j=\pm\ell$) we have
\begin{align*}
\{e^{ik\cdot\theta} \zeta_\a\bar \zeta_\b,\Lbb_u\}&=-i\nu k_1e^{ik\cdot\theta} \zeta_a\bar \zeta_b\\
\{e^{ik\cdot\theta} \zeta_\a\bar \zeta_\b,\Lbb_v\}&=-i\nu k_2e^{ik\cdot\theta} \zeta_a\bar \zeta_b.
\end{align*}
Thus $e^{ik\cdot\theta} \zeta_\a\bar \zeta_\b$ is in the jet of $f$ only if $k=(0,0)$. \endproof

\subsubsection{Proof of Theorem \ref{thmsta}}

The only difference with the proof of Theorem \ref{thmuns} lies in the construction of the change of variables $\Phi_\rho$.
First, we can remark that the change of variables $\Psi_{sym}$ doesn't change the norm. Indeed, writing 
$\Psi_{sym}\left(I',\theta,z'=(c,d)\right)=\left(I',\theta,z''=(e,f)\right)$, we remark that 
\begin{equation*}
 |c_{k}|^2+|d_{k}|^2=|e_{k}|^2+|f_{k}|^2, \quad \forall k\neq p.
\end{equation*}
Thus, we have 
\begin{equation*}
\Psi_{sym}\circ\chi_\rho\left(\mathcal{O}^s(\frac12,\frac{e^{-\frac12}}{2})\right)=\mathbf{T}_\rho(\nu,\frac12,\frac{e^{-\frac12}}{2},s).
\end{equation*}
The change of variables $\Psi_{ang}$ (see \eqref{defpsiang2}) is constructed as in \eqref{defpsiang}, and the change of variables $\tau$ from the Birkhoff normal form is the same 
as in the proof of Theorem \ref{thmuns}. Therefore, we have
\begin{equation*}
 \tau\circ\Psi_{ang}\circ\Psi_{sym}\circ\chi_\rho\left(\mathcal{O}^s(\frac12,\frac{e^{-\frac12}}{2})\right)=\Phi_\rho\left(\mathcal{O}^s(\frac12,\frac{e^{-\frac12}}{2})\right)
 \subset \mathbf{T}_\rho(\nu,1,1,s).
\end{equation*}
Therefore, to conclude the proof of Theorem \ref{thmsta}, it suffices to write
\begin{equation*}
H\circ\Phi_\rho=\tilde{H}\circ\Psi_{sym}\circ\chi_\rho=C+\nu h_0+\nu f,
\end{equation*}
with $h_0$ and $f$ defined by
\begin{equation*}
 h_0:=\frac1\nu H^e\circ\Psi_{sym}\circ\chi_\rho \quad \text{and} \quad f:=\frac1\nu R\circ\Psi_{sym}\circ\chi_\rho.
\end{equation*}
Thus defined, $h_0$ satisfies the point (ii) of the theorem. For the third point of the theorem about the term $f$, the same study as in previous case (thanks to a decomposition as 
in equation \eqref{decompo f}) shows that there exists $\sigma_0>0$ and $\mu_0>0$ such that 
\begin{equation*}
 f\in\Tc^{s}(\sigma,\mu,\D) \quad \text{for} \quad 0<\sigma\leq\sigma_0 \quad \text{and} \quad 0<\mu\leq\mu_0,
\end{equation*}
and for $0<\sigma\leq\sigma_0$ and $0<\mu\leq\mu_0$, we also have 
\begin{equation*}
  [f]^{s}_{\sigma,\mu,\D}\lesssim \nu \quad \text{and} \quad [f^T]^{s}_{\sigma,\mu,\D}\lesssim \nu^{\frac32}.
 \end{equation*} 
 On the other hand, $g$ has the form \eqref{g'}, we easily verify that $f$ commutes with $\Lbb_1$ and $\Lbb_2$ and thus $f$ is in the restricted class $\Tc^{s}_{\rm res}(\sigma,\mu,\D)$ by Lemma \ref{MoM'}.\qed


\subsubsection{Structure of \texorpdfstring{$h_0$}{hzero}}
As $\F=\emptyset$, it's easy to write the Hamiltonian $h_0$ with the normal structure \eqref{NN}.
By equation \eqref{h02}, we have 
\begin{equation}\label{h02nf}
 h_0=\Omega(\rho)\cdot r+ \sum_{\alpha\in\Zc=\L}\Lambda_\a(\rho)|\zeta_\a|^2,
\end{equation}
where $\Omega(\rho)= \begin{pmatrix}
p^2+\nu\rho_2 \\ p^2+\nu\rho_1 
\end{pmatrix}$, $\zeta_\a$ is defined by \eqref{defIthetazeta} (we recall that we have here $p=q$) for $\a\in\L=\Z\setminus\{p\}\times\{\pm\}$, and 
\begin{equation*}
\Lambda_\a(\rho)= j^2-p^2\pm\nu\sqrt{\rho_1\rho_2} \quad \text{for} \quad \a=(j,\pm)\in\L.
\end{equation*}

\subsubsection{Application of our KAM result.}
We see in Appendix \ref{AppA} that the hypotheses A0, A1 and A2 are satisfied for the Hamiltonian \eqref{H2} for the parameter 
$\delta=\frac12\nu$. 
Thus we can apply Theorem \ref{KAM} and we obtain
\begin{theorem}\label{KAMstable}
Fix $p$. There exists $\nu_0>0$ and for $0<\nu<\nu_0$ there exists $\Cc_\nu\subset [1,2]^2$ asymptotically of full measure 
(i.e. $\lim_{\nu\to 0}\meas([1,2]^2\setminus \Cc_\nu )=0$) 
such that for $\r\in\Cc_\nu$ the torus $\Tc_{\nu\r}:=\{|a_p|^2=\nu \r_1,\ |b_p|^2=\nu \r_2, \text{all other modes vanishing}  \}$, 
which is invariant for the Hamiltonian flow associated to $P_2$, persists in slightly deformed way under the perturbation $P_4$. 
Furthermore this invariant torus is linearly stable.

\end{theorem}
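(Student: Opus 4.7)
The plan is to mirror the strategy already used for Theorem \ref{KAMunstable}, but starting from the normal form produced by Theorem \ref{thmsta} (the $p=q$ version) rather than Theorem \ref{thmuns}. First, I would apply Theorem \ref{thmsta} to the Hamiltonian $H=P_2+P_4+R_5$. This yields a symplectic change of variables $\Phi_\rho:\mathcal{O}^s(1/2,e^{-1/2}/2)\to\mathbf{T}_\rho(\nu,1,1,s)$ depending analytically on $\rho$, such that $(H\circ\Phi_\rho-C)/\nu = h_0+f$, with $h_0$ in the normal form \eqref{h02nf} and with $\mathcal{F}=\emptyset$ (all external modes are elliptic in this case). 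The integrable part reads $h_0=\Omega(\rho)\cdot r+\sum_{\alpha\in\mathcal{L}}\Lambda_\alpha(\rho)|\zeta_\alpha|^2$ with internal frequencies $\Omega=(p^2+\nu\rho_2,p^2+\nu\rho_1)$ and external eigenvalues $\Lambda_{(j,\pm)}=j^2-p^2\pm\nu\sqrt{\rho_1\rho_2}$; the perturbation satisfies $[f]^s_{\sigma,\mu,\D}\lesssim\nu$ and $[f^T]^s_{\sigma,\mu,\D}\lesssim\nu^{3/2}$, and lies in the restricted class $\mathcal{T}^s_{\rm res}$ by Lemma \ref{MoM'}, because the assumption \eqref{g'} ensures $f$ commutes with the partial masses $\mathbb{L}_u,\mathbb{L}_v$.

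Next, I would verify the spectral and non-resonance hypotheses A0, A1, A2 for this $h_0$. Hypothesis A0 follows immediately from $\Lambda_{(j,\pm)}=j^2-p^2\pm\nu\sqrt{\rho_1\rho_2}$ with $w_\alpha=|j|$. For hypotheses A1 and A2, I would invoke the computations of Appendix \ref{AppA}, which yield the parameter value $\delta=\nu/2$. The key observation is that the $\rho$-dependence of $\Omega$ and $\Lambda_\alpha$ occurs at order $\nu$, so transversality conditions in a given direction ${\mathfrak z}(k)\in\R^2$ are verified with lower bound proportional to $\nu$. Relying on Remark \ref{momentum} and Lemma \ref{MoM'}, the monomials excluded by the mass conservation allow one to restrict the small divisors that need to be controlled, precisely matching the use of the restricted class $\mathcal{T}^s_{\rm res}$.

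Having done this, Theorem \ref{KAM} applies: fixing $\gamma<1/2$ (so that $\nu^{3/2}\leq\varepsilon_0\delta^{1+\gamma}$ for $\nu$ small enough) and choosing $\nu_0$ small enough so that both smallness conditions \eqref{eps} are met, one obtains a Cantor set $\D'\subset[1,2]^2$ of asymptotically full measure and a symplectic transformation $\Phi$ conjugating $h_0+f$ to a normal form $\tilde h+g$ with $g^T\equiv 0$. The invariant torus $\{0\}\times\T^2\times\{0\}$ of $\tilde h$ is thus mapped by $\Phi_\rho\circ\Phi$ to an invariant torus of the original Hamiltonian $H$, close to $\mathcal{T}_{\nu\rho}$ in the sense of \eqref{deftaurho}. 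I would set $\Cc_\nu:=\D'$.

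Finally, linear stability is essentially free: since $\mathcal{F}=\emptyset$ in this case, the dynamical consequence of Theorem \ref{KAM} directly yields linear stability of the invariant torus. The main technical hurdle is the verification of hypotheses A1 and A2 from Appendix \ref{AppA}; the arithmetic transversality for pairs $\Lambda_{(j,+)}\pm\Lambda_{(\ell,-)}=j^2\pm\ell^2-(1\mp1)p^2+\nu\sqrt{\rho_1\rho_2}(1\pm1)$ must be checked case by case, while making sure that the momentum-conservation reduction (Lemma \ref{MoM'}) eliminates precisely the resonant monomials that would otherwise violate A1(c) or the $[\alpha]=[\beta]$ case of A2(iii). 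All other steps are routine applications of previously established results.
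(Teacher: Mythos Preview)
Your proposal is correct and follows essentially the same route as the paper: apply Theorem~\ref{thmsta} to put $H$ in normal form, invoke Appendix~\ref{AppA} to verify hypotheses A0--A2 with $\delta=\nu/2$, apply Theorem~\ref{KAM}, and conclude linear stability from $\mathcal{F}=\emptyset$.

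One small clarification worth making: you refer to the ``momentum-conservation reduction (Lemma~\ref{MoM'})'', but Lemma~\ref{MoM'} is about conservation of the \emph{partial masses} $\mathbb{L}_u,\mathbb{L}_v$, not the momentum. More importantly, the paper's treatment of the $p=q$ case in Appendix~\ref{AppA} is actually simpler than you suggest: since $\mathbb{L}_u=\nu(\rho_1+r_1)$ and $\mathbb{L}_v=\nu(\rho_2+r_2)$ in the new variables, commutation with both forces $k_1=k_2=0$, so the jet of $f$ contains \emph{no} angle-dependent monomials at all. Consequently Hypothesis~A2 (which only concerns $k\neq 0$) is vacuous here, and only A1 needs to be checked --- the case-by-case transversality analysis you anticipate for A2(iii) is not needed. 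The residual work for A1(c), namely ruling out $\Lambda_{(j,+)}+\Lambda_{(k,-)}=0$, does use a momentum-type argument as you indicate, but this is a separate observation from Lemma~\ref{MoM'}.
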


In terms of small amplitude quasi periodic solutions, Theorem \ref{KAMstable} reads

\begin{corollary}\label{coro2}Fix $p$ and integer and $s>1/2$. There exists $\nu_0>0$ and for $0<\nu<\nu_0$ there exists $\Cc_\nu\subset [1,2]^2$ asymptotically of full measure (i.e. $\lim_{\nu\to 0}\meas([1,2]^2\setminus \Cc_\nu )=0$) such that for $\r\in\Cc_\nu$ there exists a quasi periodic solution $(u,v)$ of \eqref{sysgen} of the form
\begin{equation*}\begin{dcases}
 u(x,t)&=\sum_{j\in\mathbb{Z}}u_j(t\om)e^{ijx},\\
 v(x,t)&=\sum_{j\in\mathbb{Z}}v_j(t\om)e^{ijx},
 \end{dcases}\end{equation*}
where $U(\cdot)=(u_j(\cdot))_{j\in\Z}$ and $V(\cdot)=(v_j(\cdot))_{j\in\Z}$ are analytic  functions from $\T^2$ into $\ell^2_s$ satisfying uniformly in $\theta\in\T^2$
\begin{equation*}
\begin{dcases}
\big||u_p(\theta)|-\sqrt{\nu\rho_1}\big|^2+\sum_{j\neq p}(1+j^2)^s|u_j(\theta)|^2=\mathcal{O}(\nu^3) ,\\
\big||v_p(\theta)|-\sqrt{\nu\rho_2}\big|^2+\sum_{j\neq p}(1+j^2)^s|v_j(\theta)|^2=\mathcal{O}(\nu^3) 
\end{dcases}
\end{equation*}
and where $\om\equiv\om(\r)\in\R^2$ is a nonresonant frequency vector that satisfies
\begin{equation*}
 \om=(p^2,p^2)+\mathcal{O}(\nu^\frac32).
\end{equation*}
Furthermore this solution is linearly stable.
\end{corollary}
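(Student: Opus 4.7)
\smallskip

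\noindent\textbf{Proof plan for Corollary \ref{coro2}.}

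The strategy is to read off Corollary \ref{coro2} as a direct consequence of Theorem \ref{KAMstable}, by unwinding the sequence of symplectic changes of variables that were used to put $H$ into KAM-normal form and exhibiting the invariant torus as a family of $\T^2$-parametrized Fourier profiles $(U(\theta),V(\theta))$. Linear stability is already built into Theorem \ref{KAMstable}, so the only genuine work is to verify the quantitative estimates on $|u_p(\theta)-\sqrt{\nu\r_1}|$, on $\sum_{j\neq p}(1+j^2)^s|u_j(\theta)|^2$, and on the frequency $\omega$.

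\smallskip

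First, I would invoke Theorem \ref{KAMstable} to produce, for each $\r\in\Cc_\nu$, the symplectic map $\Phi$ supplied by Theorem \ref{KAM} applied to the normal-form Hamiltonian $\tilde H=h_0+f$ of Theorem \ref{thmsta}. This gives the invariant torus $\Phi(\{0\}\times\T^2\times\{0\})$ for $\tilde H$ with frequency vector $\omega_{\tilde H}(\r)=\Omega(\r)+O(\eps)$, where $\eps=[f^T]^s_{\sigma,\mu,\D}=O(\nu^{3/2})$ by point (iii) of Theorem \ref{thmsta}; since $\Omega(\r)=(p^2+\nu\r_2,p^2+\nu\r_1)$ and the time rescaling built into the relation $H\circ\Phi_\r=C+\nu\tilde H$ converts $\omega_{\tilde H}$ into the physical frequency, we obtain $\om(\r)=(p^2,p^2)+O(\nu^{3/2})$, as required. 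The physical invariant torus for $H$ itself is then
\begin{equation*}
\mathcal{T}_\r^{KAM}=\Phi_\r\circ\Phi\big(\{0\}\times\T^2\times\{0\}\big).
\end{equation*}

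\smallskip

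Next I would parametrize this torus by $\theta\in\T^2$ via $\theta\mapsto\Phi_\r\circ\Phi(0,\theta,0)$ and translate it back into the variables $(a,\bar a,b,\bar b)$ via the definitions \eqref{defIthetazeta}, \eqref{defpsiang2} and the rescaling $\chi_\r$. Since the dynamics on $\mathcal{T}_\r^{KAM}$ is pure rotation at frequency $\omega$, each Fourier coefficient $a_j(t)$ and $b_j(t)$ is of the form $u_j(t\om)$, $v_j(t\om)$ with $U,V$ analytic $\T^2\to\ell^2_s$. Unpacking the explicit structure of $\Phi_\r=\tau\circ\Psi_{ang}\circ\Psi_{sym}\circ\chi_\r$ at the point $\Phi(0,\theta,0)$: the scaling $\chi_\r$ sends $(r,z)=(0,0)$ to $(I,z')=(\nu\r,0)$, so the leading order values of the internal modes are $|a_p|=\sqrt{\nu\r_1}$, $|b_p|=\sqrt{\nu\r_2}$ with the $\theta$-angles producing the phase factors $e^{i\theta_1}$, $e^{i\theta_2}$; meanwhile $\Psi_{sym}$ and $\Psi_{ang}$ act only on external modes (which are zero at the unperturbed point) and $\tau$ is close to the identity at cubic order.

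\smallskip

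The quantitative estimates then come from the two smallness inputs: Theorem \ref{KAM} gives $\Phi=\mathrm{Id}+O(\eps)$ in the rescaled $(r,\theta,z)$-norm with $\eps=O(\nu^{3/2})$; composing with the scaling $\chi_\r$ (which multiplies external modes by $\nu^{1/2}$ and the action by $\nu$) and with the Birkhoff map $\tau$ (close to identity to order $\|z\|^3=O(\nu^{3/2})$ by Proposition \ref{BNF}), one gets $|u_p(\theta)-\sqrt{\nu\r_1}e^{i\theta_1}|=O(\nu^{3/2})$ and $|u_j(\theta)|=O(\nu^{3/2})$ for $j\neq p$ uniformly in $\theta$, with analogous bounds in $\ell^2_s$; squaring and summing produces the $O(\nu^3)$ right-hand side stated in the corollary. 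The corresponding statement for $v$ follows by symmetry. Finally, linear stability of the torus is immediate from Theorem \ref{KAM}: Theorem \ref{thmsta} places us in the case $\F=\emptyset$, so $\mathcal T_\r^{KAM}$ is linearly stable by the last sentence of Theorem \ref{KAM}.

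\smallskip

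The only step that requires care, rather than pure bookkeeping, is tracking how the $\theta$-dependent affine part of $\Phi_\r$ (coming from $\Psi_{ang}$) interacts with the $O(\nu^{3/2})$ KAM correction when read in original Fourier coordinates; once this is handled mode by mode, the three displayed estimates follow from the norm of $\Phi-\mathrm{Id}$ given by Theorem \ref{KAM} combined with the scaling factors in $\Phi_\r$ listed in Theorem \ref{thmsta}.
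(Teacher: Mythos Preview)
Your proposal is correct and follows exactly the approach the paper takes: the paper does not give a separate proof of Corollary \ref{coro2} but presents it as a direct reformulation of Theorem \ref{KAMstable} (just as it derives Theorem \ref{coro1} from Theorem \ref{KAMunstable} in Subsection \ref{subsubAppli} by noting $\eps=[f^T]^s_{\sigma,\mu,\D}=O(\nu^{3/2})$), leaving the unwinding of $\Phi_\rho=\tau\circ\Psi_{ang}\circ\Psi_{sym}\circ\chi_\rho$ and of the KAM map $\Phi$ implicit.

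One small correction: your ``time rescaling'' justification for the frequency estimate is not right. The factor $\nu$ in $H\circ\Phi_\rho=C+\nu\tilde H$ is exactly compensated by the $\nu$ in the transformed symplectic form (Theorem \ref{thmsta}(i)), so the physical $\theta$-frequency on the torus is directly $\omega_{\tilde H}=\Omega(\rho)+O(\eps)=\Omega(\rho)+O(\nu^{3/2})$ with no further rescaling. Since $\Omega(\rho)=(p^2+\nu\rho_2,\,p^2+\nu\rho_1)$, what one actually obtains is $\omega-(p^2+\nu\rho_2,\,p^2+\nu\rho_1)=O(\nu^{3/2})$, hence $\omega=(p^2,p^2)+O(\nu)$; the $O(\nu^{3/2})$ displayed in the statement is an imprecision already present in the paper (in both Theorem \ref{coro1} and Corollary \ref{coro2}), not something you need to manufacture.
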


\appendix 
\section{Verification of the non resonance hypotheses}\label{AppA}
In this appendix, the goal is to check that the hypotheses A0, A1 and A2 are satisfied for the Hamiltonians $h_0$ given by  \eqref{h01nf} and \eqref{h02nf}. The first one, Hypothesis A0, is trivially satisfied and we focus on A1 and A2.

In this process we will use the conservation of the mass and of the momentum (see Remark \ref{momentum}). 
The expression of $\Lbb$ and $\Mbb$ depends on the change of variable $\Phi_\r$ so we have to distinguish between the two examples studied in section 4.
We recall that we have initially 
\begin{equation*}
 \mathbb{L}=\sum\limits_{j\in\mathbb{Z}}(|a_j|^2+|b_j|^2), \quad \text{and} \quad 
 \mathbb{M}=\sum\limits_{j\in\mathbb{Z}}j(|a_j|^2+|b_j|^2)\,.
\end{equation*}

\noindent{\it The first case corresponding to \eqref{h01nf}.}\\
In the new variables, by Lemma \ref{MoM}, we have for the mass and momentum:
\begin{align*}
 \mathbb{L}&=\nu \sum\limits_{\a\in\L}|\zeta_\a|^2+\nu( \r_1+\r_2+r_1+r_2)
\end{align*}
and 
\begin{align*}
 \mathbb{M}=\nu\big(\sum_{\a\in \L}\a_1|\zeta_\a|^2+ p(r_1+\rho_1)+q(r_2+\rho_2)\big).
\end{align*}
So the perturbation $f$ of \eqref{H1} commutes with
\begin{equation}\label{com1}
\Lbb_1=r_1+r_2 + \sum\limits_{\a\in\L}|\zeta_\a|^2\quad \text{ and } \quad \Mbb_1=  pr_1+qr_2+\sum_{\a\in \L}\a_1|\zeta_\a|^2.
\end{equation}

\noindent{\it The second case corresponding to \eqref{h02nf}.}\\
In the new variables, by Lemma \ref{MoM'}, we have for the partial mass
\begin{align*}
 \mathbb{L}_u=\nu( \r_1+r_1) \quad \text{and} \quad  \mathbb{L}_v=\nu( \r_2+r_2).
\end{align*}
Therefore, the perturbation $f$ of \eqref{H2}, that commutes with these partial mass, doesn't have terms with angles. 
Indeed, we have for example 
\begin{equation*}
 \left\{e^{ik\cdot \theta}\bar\zeta_{\a}\zeta_\b,\mathbb{L}_u\right\}=i\nu k_1e^{ik\cdot \theta}\bar\zeta_{\a}\zeta_\b=0
 \quad \Rightarrow \quad k_1=0.
\end{equation*}
Thus, we just have to check Hypothesis A1 for this case.

\subsection{Verification of Hypothesis A1}

Conditions (a) and (b) trivially hold true with $\delta\leq\nu<1/5$ for $h_0$ given by  \eqref{h01nf} or by \eqref{h02nf}. 
For \eqref{h01nf} condition (c) also holds trivially true  with $\delta\leq1/2$ as soon as $\nu<1/4$. Nevertheless condition (c) is not always true for \eqref{h02nf}, for instance we have 
$$\Lambda_{-p,+}+\Lambda_{-p,-}=\nu\sqrt{\r_1\r_2}-\nu\sqrt{\r_1\r_2}=0.$$ 
More generally $\Lambda_{j,+}+\Lambda_{k,-}=j^2+k^2-2p^2$ can vanish for some values of $(j,k,p)$ and these are the only problems in order to verify the condition (c). The small divisor $\Lambda_{j,+}+\Lambda_{k,-}$ corresponds to the quadratic term $\zeta_{j,+}\zeta_{k,-}$. As we know that the perturbation $f$ commutes with $\Mbb_2$ we have to consider only the case $\{\Mbb_2,\zeta_{j,+}\zeta_{k,-}\}=0$. This yields the two conditions on $(j,k,p)$
$$j^2+k^2-2p^2=0\quad \text{ and }\quad j+k-2p=0$$
whose only solution is $k=j=p$ which is not possible since $(p,\pm)\notin\Zc$.

\subsection{Verification of Hypotheses A2 (i), (ii) and (iii)}

For these three hypotheses, the second alternative allows to obtain restrictions for the choice of $k$.
Thanks to equation \eqref{h01nf},  we have for $k\in\Z^2\setminus0$ and $\mathfrak z=|k|^{-1}(k_2,k_1)$
$$\nazz (\Omega(\r)\cdot k)=\nu|k|.$$
Therefore, since $|\nazz \Lambda_\a(\r)|\leq \nu$ for all $\a\in\Zc$, we get choosing $\delta\leq\nu$:
\begin{itemize}
\item the second part of alternative A2 (i) is always satisfied for $k\neq 0$,
\item the second part of alternative A2 (ii) is always satisfied  for $|k|\geq 2$,
\item the second part of alternative A2 (iii) is always satisfied for $|k|\geq 3$.
\end{itemize}
So it remains to verify  alternative A2 (ii)-(iii) for a finite number of choices of $k$:   $0<|k|< 2$ for (ii) and  $0<|k|< 3$ for (iii). 
The condition $0<|k|< 2$ implies that 
\begin{equation}\label{k<2}
 k\in \mathcal{K}_2:=\left\{\pm(0,1),\pm(1,0),(\pm1,\pm1)\right\},
\end{equation}
and the condition $0<|k|< 3$ implies that
\begin{equation}\label{k<3}
 k\in \mathcal{K}_3:=\mathcal{K}_2\cup\left\{\pm(0,2),\pm(2,0),(\pm1,\pm2),(\pm2,\pm1),(\pm2,\pm2)\right\}.
\end{equation}
As in the previous section we will use the conservation of the mass and momenta. The strategy is to to show that for each small divisor, either the small divisor corresponds to a 
term that doesn't exists (we use the conservation of the mass and momenta to show this) and we don't need to estimate it, either the small divisor satisfies one of the two alternatives 
of the hypotheses A2 (ii) and (iii).

\medskip

The small divisor $\Omega\cdot k+\Lambda_\a$ (with $\a=(j,\pm)\in\L$) corresponds to the monomial $e^{ik\cdot \theta}\bar\zeta_{\a}$ with $k\in\mathcal{K}_2$ defined in \eqref{k<2}, in the following sense:
\begin{equation*}
 \left\{e^{ik\cdot \theta}\bar\zeta_{\a},h_0\right\}=i(\Omega\cdot k+\Lambda_\a)e^{ik\cdot \theta}\bar\zeta_{\a}.
\end{equation*}
The conservation of the mass $\Lbb_1$ defined in \eqref{com1} gives 
\begin{equation*}
 \left\{e^{ik\cdot \theta}\bar\zeta_{\a},\Lbb_1\right\}=ie^{ik\cdot \theta}\bar\zeta_{\a}(k_1+k_2+1)=0.
\end{equation*}
Therefore, we just have to study the cases $k\in\left\{(0,-1),(-1,0)\right\}$. 
Moreover, the conservation of the momentum $\Mbb_1$ defined in \eqref{com1} gives 
\begin{equation*}
 \left\{e^{ik\cdot \theta}\bar\zeta_{\a},\Mbb_1\right\}=ie^{ik\cdot \theta}\bar\zeta_{\a}(pk_1+qk_2+j)=0.
\end{equation*}
For $k\in\left\{(0,-1),(-1,0)\right\}$, the conservation of the momentum $\Mbb_1$ implies $j\in\left\{p,q\right\}$, which is 
excluded here because $\a=(j,\pm)\in\L$.

We consider the small divisor $\Omega\cdot k+\Lambda_\a+ \Lambda_\b$ in the same way and we notice that it corresponds to the monomial 
$e^{ik\cdot \theta}\bar\zeta_{\a}\bar\zeta_{\b}$ with $k\in\mathcal{K}_3$ defined in~\eqref{k<3}.
Let $\a=(j,\pm_1), \b=(\ell,\pm_2) \in \L$.
We have 
\begin{equation}
 \Om(\r)\cdot k+\Lambda_\a(\r)+ \Lambda_\b(\r)=N_1(p,q,j,\ell)+\nu(k_1\r_2+k_2\r_1+\eta_1(\r)),
\end{equation}
where 
\begin{equation*}
 \eta_1(\r)\in\left\{2\r_1,2\r_2,\r_1+\r_2\right\} \quad \text{and} \quad N_1(p,q,j,\ell)=p^2k_1+q^2k_2+j^2+\ell^2.
\end{equation*}
The conservation of the mass $\Lbb_1$ implies $k_1+k_2=-2$. 
Thus, we just have to consider the three cases $k\in\left\{(0,-2),(-2,0),(-1,-1)\right\}$.
In these three cases, we have $|k_1\r_2+k_2\r_1+\eta_1(\r)|\in\left\{0,|\r_1-\r_2|,2|\r_1-\r_2|\right\}\in[0,2]$. Therefore, in order to check Hypothesis A2 (ii), it suffices to choose 
$\nu$ small enough ($\nu\leq\frac14$ is enough here) and to show that the integer part $N_1$ of the small divisor $\Om(\r)\cdot k+\Lambda_\a(\r)+\Lambda_\b(\r)$ is an nonzero integer.
For that purpose, we use the conservation of the momentum which gives
\begin{equation}\label{N1consmoment}
 pk_1+qk_2+j+\ell=0.
\end{equation}
Let us start with the case $k=(-1,-1)$. Using the conservation of the momentum \eqref{N1consmoment}, the integer part $N_1$ of the small divisor is zero if and only if
\begin{equation*}
 p+q=j+\ell \quad \text{and} \quad p^2+q^2=j^2+\ell^2,
\end{equation*}
which leads to $(p-j)(p+j)=(p-j)(l+q)$. Therefore, either we have $j=p$, which is excluded because $\a=(j,\pm_1)\in \L=\Z\setminus\{p,q\}\times\{\pm\}$; either we have 
$p+j=l+q$, which, using the conservation of the momentum \eqref{N1consmoment} once again, leads to $j=q$ which is also excluded for the same reason.
Thus, we have $|N_1(p,q,j,\ell)|\geq1$ (it is an nonzero integer). 
For the case $k=(-2,0)$, always using the conservation of the momentum, we have $N_1=0$ if and only if
\begin{equation*}
 2p=j+\ell \quad \text{and} \quad 2p^2=j^2+\ell^2,
\end{equation*}
which leads to $j=\ell=p$, and is excluded as we work with $\a,\b \in \L$.
The same computations for the case $k=(0,-2)$ lead to $j=\ell=q$. 
We thus obtain the same contradiction. Finally, with $\nu$ small enough, we obtain in the three cases
\begin{align*}
 |\Om(\r)\cdot k+\Lambda_\a(\r)+\Lambda_\b(\r)|\geq\frac12\geq\nu.
\end{align*}

The last small divisor to study here is $\Omega\cdot k+\Lambda_\a-\Lambda_\b$, corresponding to the monomial $e^{ik\cdot \theta}\bar\zeta_{\a}\zeta_{\b}$, with $k\in\mathcal{K}_3$.
Let $\a=(j,\pm_1), \b=(\ell,\pm_2) \in \L$.
The small divisor to study is given by
\begin{equation}
 \Om(\r)\cdot k+\Lambda_\a(\r)-\Lambda_\b(\r)=N_2(p,q,j,\ell)+\nu(k_1\r_2+k_2\r_1+\eta_2(\r)),
\end{equation}
where 
\begin{equation*}
 \eta_2(\r)\in\left\{0,\r_1-\r_2,\r_2-\r_1\right\} \quad \text{and} \quad N_2(p,q,j,\ell)=p^2k_1+q^2k_2+j^2-\ell^2.
\end{equation*}
This time, the conservation of the mass $\Lbb_1$ gives the equation $k_1+k_2=0$. 
Therefore we have to study in $\mathcal K_3$ the four cases $k\in\left\{\pm(-1,1),\pm(-2,2)\right\}$.
For $\a=(j,\pm_1)\in\L$ and $\b=(\ell,\pm_2)\in\L$, we split the study with respect to the values of $k$.
On the one hand, if $k\in\left\{(-1,1),(-1,1)\right\}$, we use the same strategy as in the study of the previous divisor: we show that $N_2$ is an nonzero integer.
This time, the conservation of the momentum which gives
\begin{equation*}
 pk_1+qk_2+j-\ell=0.
\end{equation*}
Using this equation for $k=\pm(1,-1)$, the integer $N_2$ is zero if and only if $p+q=j+l$. 
This condition leads, thanks to the conservation of the momentum once again, to $\{j,\ell\}=\{p,q\}$, which is excluded for $\a,\b \in \L$.
On the other hand, if $k\in\left\{(-2,2),(-2,2)\right\}$, the same strategy doesn't fit. Indeed, the couple $(j,\ell)=\frac12(3p-q,3q-p)$ gives $N_2=0$ and respects the conservation 
of the momentum. Therefore, we have to consider the second alternative of the hypothesis. For $\mathfrak z=|k|^{-1}(k_2,k_1)$, we have 
\begin{equation*}
 \nazz(\Om(\r)\cdot k+\Lambda_\a(\r)-\Lambda_\b(\r))\geq\nu(|k|-\frac{|k_1-k_2|}{|k|})=\sqrt2\nu.
\end{equation*}

\subsection{Verification of Hypothesis A2 (iv)}

This hypothesis occurs only for the Hamiltonian $h_0$ given by \eqref{h01nf} (there is no hyperbolic direction in the Hamiltonian given by \eqref{h02nf}). 
This time, there is no second alternative. Therefore, we can't reduce the choices of $k$. 
Nevertheless, 
using that $\Lbb_1$ and $\Mbb_1$ defined in \eqref{com1} do not depend on the hyperbolic modes, the terms
$e^{ik\cdot \theta}\bar\zeta_{\a}\zeta_\b$ and $e^{ik\cdot \theta}\zeta_{\a}\zeta_\b$ (with $\a,\b\in\F$) can appear if and 
only if 
\begin{equation*}
 k_1+k_2=0 \quad \text{and} \quad pk_1+qk_2=0.
\end{equation*}
Once again, using that $p\neq q$, this leads to $k=(0,0)$, and we don't have terms to control for this hypothesis.
\medskip

{\it Conclusion.} 
In order to show that the spectrum of $h_0$ satisfies the hypotheses A1 and A2, it suffices to choose 
for both cases ($h_0$ defined by \eqref{h01nf} or  \eqref{h02nf}) the value 
\begin{equation*}
\delta=\frac12\nu.
\end{equation*}
Therefore, the hypotheses of Theorem \ref{KAM} are satisfied, and we can apply this theorem to obtain Theorem \ref{thmuns} for the linear 
instability of the torus 
\begin{equation*}
 \Tc^u_{\nu\r}:=\{|a_p|^2=\nu \r_1,\ |b_q|^2=\nu \r_2, \text{all other modes vanishing}  \} \text{ where } p\neq q,
\end{equation*}
and to obtain Theorem \ref{thmsta} for the linear stability of 
\begin{equation*}
 \Tc^s_{\nu\r}:=\{|a_p|^2=\nu \r_1,\ |b_p|^2=\nu \r_2, \text{all other modes vanishing}  \}
\end{equation*}
for the extended cubic coupled Schr\"odinger systems \eqref{sysgen}.

\end{document}